\def\N{\mathbb{N}}
\def\R{\mathbb{R}}
\def\pa{\partial}
\def\var{\varepsilon}
\def\epsilon{\varepsilon}
\def\ov{\overline}
\newtheorem{theorem}{Theorem}
\newtheorem{propo}{Proposition}
\newtheorem{lem}{Lemma}
\newtheorem{corol}{Corollary}
\newtheorem{remark}{Remark}
\newtheorem{Theo}{Theorem}[section]
\newtheorem{definition}[Theo]{Definition}
\def\footnote{\@ifnextchar[{\@xfootnote}{\stepcounter {\@mpfn}\xdef\@thefnmark{\thempfn}\@footnotemark\@footnotetext}}
\newcommand{\ud}{\mathrm{d}}
\begin{document}

\begin{center}
\Large{New results for triangular reaction cross diffusion system}
 \end{center}
\bigskip

\medskip
\centerline{\scshape L. Desvillettes \& A. Trescases}
\medskip
{\footnotesize
  \centerline{CMLA, ENS Cachan, CNRS}
  \centerline{61 Av. du Pdt. Wilson, F-94230 Cachan, France}
\centerline{E-mails: desville@cmla.ens-cachan.fr, trescase@cmla.ens-cachan.fr}}
\bigskip

\begin{abstract}
We present an approach based on entropy and duality methods for ``triangular'' reaction cross diffusion systems of two
equations, in which cross diffusion terms appear only in one of the equations. Thanks to this approach, we recover and extend
many existing results on the classical ``triangular'' Shigesada-Kawasaki-Teramoto model.
\end{abstract}

\noindent{Subject Class: } 35K57 35B25 35Q92 92D25
\medskip

\noindent{Keywords: reaction-diffusion, cross diffusion, entropy methods, duality methods}

\section{\protect\bigskip Introduction}

Reaction cross diffusion equations naturally appear in physics (cf. \cite{BGS} for example) as well as in population dynamics. 
We are interested here in the study of a class of systems first introduced by Shigesada, Kawasaki, and Teramoto (cf. \cite{STK}). Those systems aim
at modeling the repulsive effect of populations of two different species in competition, and are possibly leading to the apparition of patterns (cf. \cite{izmi}).
% when stable steady solutions are concerned.
\medskip

The unknowns are the quantities $u:=u(t,x)\ge 0$ and $v:= v(t,x)\ge 0$. They represent the  number densities of the two considered species (say, species 1 and species 2). They depend on the time variable $t\in\R_+$ and the space variable $x\in\Omega$. Hereafter, $\Omega$ is a smooth bounded domain of $\mathbb{R}^N$ ($N\in\N^*:= \N - \{0\}$) and we denote by $n=n(x)$ its unit normal outward vector at point $x\in\pa\Omega$. The original model of \cite{STK} writes
 \begin{equation} \label{skt} \left\{\begin{aligned}
 \pa_t u - \Delta_x (d_u\, u + d_{11}\, u^2 + d_{12} \,u\,v) = u\, (r_u - r_{a}\, u - r_{b}\,v) &\qquad \text{in } \R_+\times\Omega, \\
 \pa_t v - \Delta_x (d_v\, v + d_{21}\, u\,v + d_{22} \,v^2) = v\, (r_v - r_{c}\,v - r_d\, u) &\qquad \text{in } \R_+\times\Omega, \\
 \nabla_x u \cdot n = \nabla_x v\cdot n = 0 &\qquad \text{on } \R_+\times\pa\Omega.
 \end{aligned} \right.
 \end{equation} 

The coefficients $r_u, r_v>0$ are the growth rates in absence of other individuals, $r_a, r_b, r_c, r_d>0$ correspond to the logistic inter- and intraspecific competition effects, and $d_u, d_v>0$ are the diffusion rates. The coefficients $d_{ij} \ge 0$ 
($i,j=1,2$) represent the repulsive effect: individuals of species $i$ increase their diffusion rate in presence of individuals of their own species when $d_{ii}>0$ (self diffusion) or of the other species when $d_{ij}>0$ ($i \neq j$, cross diffusion). 
\medskip

In the sequel, we shall only consider the case when $d_{21} =0$ and $d_{12}>0$, which is sometimes called ``triangular''. In such a situation,
% the two-ways coupling between the two equations  
 the second equation is coupled to the first one only through the competition (reaction) term
while the first one is coupled to the second one through both diffusion and competition terms
 (the fully coupled system when $d_{21} >0$ and $d_{12}>0$ has a quite different
mathematical structure, cf. \cite{CJ} and \cite{DLM} for example). We shall also only focus on the case when no self diffusion appears (that is $d_{11}=d_{22}=0$) since this case is the most studied one: note however that the presence of self-diffusion (that is, $d_{11}>0$ and/or $d_{22}>0$) usually helps to obtain better bounds on the solution. As a consequence,  our results  are expected to hold when self-diffusion is present.
\medskip

Under the extra assumptions detailed above, the Shigesada-Kawasaki-Teramoto system writes 
 \begin{equation} \label{sktt} \left\{\begin{aligned}
 \pa_t  u - \Delta_x (d_u\, u + d_{12} \,u\,v) = u\, (r_u - r_{a}\, u - r_{b}\,v) &\qquad \text{in } \R_+\times\Omega, \\ 
 \pa_t  v - d_v\, \Delta_x v  = v\, (r_v - r_{c}\,v - r_d\, u) &\qquad \text{in } \R_+\times\Omega, \\
 \nabla_x u \cdot n = \nabla_x v\cdot n = 0 &\qquad \text{on } \R_+\times\pa\Omega.
 \end{aligned} \right.
 \end{equation} 
\par
Following \cite{imn}, this system can be seen as the formal singular
limit of a reaction diffusion system which writes
\begin{equation} \label{skktinf} \left\{\begin{aligned}
	\partial_{t} u_{A}^\epsilon - d_u  \,\Delta_x u_{A}^\epsilon = [r_u-r_{a}\,(u_{A}^\epsilon+u_{B}^\epsilon)-r_{b}\,v^\epsilon]\, u_{A}^\epsilon + \frac{1}{\epsilon}[k(v^\epsilon)\,u_{B}^\epsilon-h(v^\epsilon)\,u_{A}^\epsilon] &\qquad \text{in } \R_+\times\Omega,\\
	\partial_{t} u_{B}^\epsilon - (d_u+d_B) \,\Delta_x u_{B}^\epsilon = [r_u-r_{a}\,(u_{A}^\epsilon+u_{B}^\epsilon)-r_{b}\,v^\epsilon]\, u_{B}^\epsilon - \frac{1}{\epsilon}[k(v^\epsilon)\,u_{B}^\epsilon-h(v^\epsilon)\,u_{A}^\epsilon] &\qquad \text{in } \R_+\times\Omega,\\
	\partial_{t} v^\epsilon - d_v \,\Delta_x v^\epsilon = [r_v-r_{c}\,v^\epsilon-r_{d}\,
(u_{A}^\epsilon +u_{B}^\epsilon)]\, v^\epsilon &\qquad \text{in } \R_+\times\Omega,\\
 \nabla_x u_A^\epsilon \cdot n =\nabla_x u_B^\epsilon \cdot n = \nabla_x v^\epsilon\cdot n = 0 &\qquad \text{on } \R_+\times\pa\Omega,
\end{aligned} \right.
\end{equation}
where $d_B>0$, and $h,k$ are two (continuous) functions from $\R_+$ to $\R_+$ satisfying (for all $v \ge 0$) the identity
 $$d_B\,\frac{h(v)}{h(v)+k(v)} =  d_{12}\,v. $$
The limit holds (at the formal level)
in the following sense: if $u_A^\var$, $u_B^\var$, and $v^\var$ are solutions to system (\ref{skktinf}) (with 
$\var$-independent initial data), the quantity $(u_A^\var + u_B^\var,v^\var)$
converges towards $(u,v)$, where $u$ and $v$ are solutions to system
(\ref{sktt}). Note that this asymptotics can be biologically meaningful: when $\var>0$, the system (\ref{skktinf}) represents a microscopic model in which the species $u$ can be found in two states (the quiet state $u_A$ and the stressed state $u_B$), and 
the individuals of this species switch
from one state to the other one with a ``large'' rate (proportional to $1/\var$). 
\medskip

We present in this paper results for the existence, uniqueness and stability of a large class of systems including 
(\ref{sktt}). More precisely, we relax the assumption stating that the competition terms are logistic (quadratic), and replace it with the assumption stating that the competition terms are given by power laws (the powers being suitably chosen). We 
also relax the assumption stating that the cross diffusion term is quadratic (that is, proportional to $u\, v$) and replace it by the more
general assumption stating that it writes $u\,\phi(v)$ (with $\phi \in C^1(\R_+)$,  and $\phi$ nonnegative). 
\medskip

Hence, we shall consider the system
\begin{align}
\label{sku1} \pa_t u - \Delta_x (d_u\, u + u\,\phi(v)) = u\, (r_u - r_{a}\, u^a - r_{b}\,v^b) \qquad \text{in } \R_+\times\Omega,\\
\label{sku2} \pa_t v - d_v\, \Delta_x v  = v\, (r_v - r_{c}\,v^c - r_d\, u^d) \qquad \text{in } \R_+\times\Omega,
\end{align}
with homogeneous Neumann boundary conditions
\begin{equation} \label{sku3}
 \nabla_x u \cdot n = \nabla_x v\cdot n = 0 \qquad \text{on } \R_+\times\pa\Omega,
 \end{equation}
and initial data
\begin{equation} \label{sku4}
u(0,\cdot)=u_{in}, \qquad v(0,\cdot)=v_{in} \qquad \text{in } \Omega.
\end{equation}
The functions $u_{in} :=u_{in}(x)\ge0$ and $v_{in} := v_{in}(x)\ge0$ are defined on $\Omega$ and assumed to be nonnegative.
% functions 
%defined on $\Omega$. 
In cases in which we want to prove that the solutions are strong, they will sometimes be required to satisfy the following compatibility conditions on the boundary
\begin{align}
\label{compat_u} \nabla_x u_{in} \cdot n = 0 \qquad \text{on } \pa\Omega,\\
\label{compat_v} \nabla_x v_{in} \cdot n = 0 \qquad \text{on } \pa\Omega.
\end{align}
In our theorems, we shall consider parameters in \eqref{sku1}-\eqref{sku2} which satisfy the

\medskip
\noindent
{\bf{Assumption A}}: $d_u, d_v>0$, $r_u,r_v,r_a,r_b,r_c,r_d>0$, $a,b,c,d>0$, and  $\phi:= \phi(v) \ge 0$, $\phi \in C^1(\R_+)$.
\medskip

We now specify what is meant by a weak solution in our theorems.

\medskip
\noindent
We recall the following notation: for $p\in[1,\infty[$,  $$L^{p}_{\text{loc}}(\R_+\times\ov{\Omega}) :=\{u=u(t,x): \text{for all }T>0,\, \, \int_0^T\int_{\Omega} |u(t,x)|^p\, dx dt < \infty\}.$$

\begin{definition}\label{defimer}
Let $\Omega$ be a smooth bounded domain of $\mathbb{R}^N$ ($N\in \N^\ast$).
Let $u_{in},\,v_{in}$ be two nonnegative functions lying in $L^1(\Omega)$, and  
$d_u, d_v$, $r_u,r_v,r_a,r_b,r_c,r_d$, $a,b,c,d>0$, $\phi:= \phi(v)$ be parameters satisfying assumption A.
\par
 A pair of functions $(u,v)$ such that $u:=u(t,x)\ge0$ and $v:=v(t,x)\ge0$, lying moreover in $L_{\text{loc}}^{\max(1+a,d)}(\R_+\times\ov{\Omega})\times L_{\text{loc}}^{\infty}(\R_+\times\ov{\Omega})$ is a {\bf weak solution} of \eqref{sku1}-\eqref{sku4} if $\nabla_x u$, $\nabla_x v$, $\nabla_x\, [\phi(v)\,u]$ lie in $L_{\text{loc}}^{1}(\R_+\times\ov{\Omega})$ and, for all test functions $\psi_1$, $\psi_2 \in C^1_c(\R_+ \times {\ov{\Omega}})$, % such that $\quad \forall t\in\R_+, x \in \pa\Omega, \quad \nabla_x \psi_1(t,x) \cdot n(x) = \nabla_x \psi_2(t,x) \cdot n(x) = 0$, 
 the following identities hold:
\begin{equation*}%\label{we1}
 - \int_0^{\infty}\int_{\Omega} (\pa_t \psi_1)\, u - \int_{\Omega} \psi_1(0,\cdot)\, u_{in} 
+ \int_0^{\infty}\int_{\Omega} \nabla_x \psi_1 \cdot \nabla_x \left[(d_u+ \phi(v))\,u\right] =  \int_0^{\infty}\int_{\Omega} \psi_1\, u\, (r_u - r_{a}\, u^a - r_{b}\,v^b), 
\end{equation*}
\begin{equation*}%\label{we2}
 - \int_0^{\infty}\int_{\Omega} (\pa_t \psi_2)\, v - \int_{\Omega} \psi_2(0,\cdot)\, v_{in} 
+ d_v\int_0^{\infty}\int_{\Omega} \nabla_x \psi_2 \cdot  \nabla_x v = \int_0^{\infty}\int_{\Omega} \psi_2\,
 v\, (r_v - r_{c}\,v^c - r_d\, u^d). 
 \end{equation*}
 Note that all terms in the previous identities are well-defined under our assumptions on 
 $u_{in}$, $v_{in}$, $u$, $v$, $\psi_1$, $\psi_2$, $\phi$.
\end{definition}

We propose two theorems, corresponding to the respective cases $d<a$ and $a\le d$.
The first one writes:
\begin{theorem}\label{theo_ex1}
Let $\Omega$ be a smooth bounded domain of $\mathbb{R}^N$ ($N\in \N^\ast$). 
We suppose that Assumption A on the coefficients of system (\ref{sku1}) -- (\ref{sku2}) holds,
 together with the extra assumption $d<a$. Finally, we consider initial data $u_{in}\ge 0$, $v_{in}\ge 0$, such that
 $u_{in} \in L^{p_0}(\Omega)$, 
 $v_{in}\in L^\infty(\Omega)\cap W^{2, 1 + p_0/d}(\Omega)$ for some $p_0>1$. If $1 + p_0/d \ge 3$, we also assume
 the compatibility condition \eqref{compat_v}.
 \medskip
 
Then, there exists a (global, with nonnegative components) weak solution $(u,v)$ of system (\ref{sku1}) -- (\ref{sku4}) in the sense of Definition~\ref{defimer} [In particular, $(u,v) \in L_{\text{loc}}^{\max(1+a,d)}(\R_+\times\ov{\Omega})\times L_{\text{loc}}^{\infty}(\R_+\times\ov{\Omega})$ 
and $\nabla_x u$, $\nabla_x v$, $\nabla_x\, [\phi(v)\,u]$ lie in $L_{\text{loc}}^{1}(\R_+\times\ov{\Omega})$].
\par
 Moreover, this solution lies in $L^{p_0 + a}_{\text{loc}}(\R_+\times\ov{\Omega})\times L^\infty_{\text{loc}}(\R_+\times\ov{\Omega})$, $\nabla_x v$ lies in $L^{2(1+p_0/d)}_{\text{loc}}(\R_+\times\ov{\Omega})$ and for all $p\in]1,p_0]$, $T>0$,
 \begin{equation}\label{u_Vp0}
\sup_{t\in [0,T]} \int_\Omega u^{p_0} (t) < +\infty \qquad \text{;} \qquad \int_0^T \int_\Omega |\nabla_x (u^{p/2})|^2<+\infty \, .
\end{equation} 
 \medskip
 
We suppose in addition to the previous assumptions that $\phi \in C^2(\R_+)$, $u_{in} \in W^{2,s_0}(\Omega)$, $v_{in} \in  W^{2, \frac{1}{d}\,\max(a s_0,a+2)}(\Omega)$ for some
 $s_0 > 1 + N/2$, and that compatibility conditions \eqref{compat_u}, (resp. \eqref{compat_v}) hold when
 $s_0 \ge 3$, (resp. $ \frac{1}{d}\,\max(a s_0,a+2) \ge 3$). Then $(u,v)$ is H\"older continuous 
%w.r.t. variables $t,x$
on $\R_+ \times \bar{\Omega}$, and $\pa_t u,\, \pa_{x_i x_j} u \in L^{s_0}_{\text{loc}}(\R_+\times\ov{\Omega})$,
$\pa_{x_i} u \in L^{2}_{\text{loc}}(\R_+\times\ov{\Omega})$, $\pa_t v, \, \pa_{x_i x_j} v \in L^{\max(a s_0,a+2)/d}_{\text{loc}}(\R_+\times\ov{\Omega})$ ($i,j = 1..N$, and the derivatives are taken in the sense of distributions). Note that since $u$ is H\"older, we know that $u \in L^{\max(a s_0, a+2)}_{\text{loc}}(\R_+\times\ov{\Omega})$.

 \medskip
 
Finally, if (in addition to the previous assumptions) $\phi$ has H\"older continuous second order derivatives on $\R_+$, if $u_{in}, v_{in}$ have H\"older continuous second order derivatives on $\ov{\Omega}$, and if compatibility conditions \eqref{compat_u}--\eqref{compat_v} are satisfied, then $u,v$ have
 H\"older continuous first order time derivatives and
 H\"older continuous second order space derivatives on
$\R_+ \times \ov{\Omega}$.
\par 
In this last setting, and provided that $b,d\ge 1$,
 the following stability estimate holds:
 if $(u_{1, in}, v_{1,in})$ and
 $(u_{2, in}, v_{2,in})$ are two sets of initial data with nonnegative components, then any corresponding weak solutions $(u_1,v_1)$, $(u_2,v_2)$ in the sense of Definition \ref{defimer}, lying in $ L^{\max(a s_0, a+2)}_{\text{loc}}(\R_+\times\ov{\Omega})\times  L^{\infty}_{\text{loc}}(\R_+\times\ov{\Omega})$ and such that
 (for any $T>0$)
 \begin{equation}\label{ui_V2}
\sup_{t\in [0,T]} \int_\Omega u_i^{2} (t) < +\infty \qquad \text{and} \qquad \int_0^T \int_\Omega |\nabla_x u_i|^2<+\infty \qquad \text{for } i = 1,\,2, 
%\text{ for any } T>0,
\end{equation} 
  satisfy (for any $T>0$)
$$||u_1 - u_2||_{L^2([0,T] \times \Omega)} + ||v_1 - v_2||_{L^2([0,T] \times \Omega)}
 \le C_T\, \bigg( ||u_{1,in} - u_{2,in}||_{L^2(\Omega)} 
+ ||v_{1,in} - v_{2,in}||_{L^2(\Omega)} \bigg), $$
 for some constant $C_T>0$. 
As a consequence, uniqueness holds in this last setting (among weak solutions 
in the sense of Definition \ref{defimer} lying in $ L^{\max(a s_0, a+2)}_{\text{loc}}(\R_+\times\ov{\Omega})\times  L^{\infty}_{\text{loc}}(\R_+\times\ov{\Omega})$ and satisfying \eqref{ui_V2}).
\end{theorem}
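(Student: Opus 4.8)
The plan is to establish the stability estimate through two coupled a priori bounds for the differences $U := u_1 - u_2$ and $V := v_1 - v_2$: a standard parabolic energy estimate for $V$, whose equation has the constant diffusion $d_v$, and a duality (Pierre-type) estimate for $U$, whose equation is quasilinear because of the cross diffusion. Subtracting the two weak formulations of Definition~\ref{defimer}, $V$ solves $\pa_t V - d_v\,\Delta_x V = G_1 - G_2$ with $G_i := v_i\,(r_v - r_c v_i^c - r_d u_i^d)$, while $U$ solves, weakly, $\pa_t U - \Delta_x(A_1 - A_2) = R_u$, where $A_i := (d_u + \phi(v_i))\,u_i$ and $R_u := u_1(r_u - r_a u_1^a - r_b v_1^b) - u_2(r_u - r_a u_2^a - r_b v_2^b)$. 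The decomposition $A_1 - A_2 = (d_u + \phi(v_1))\,U + S$, with $S := u_2\,(\phi(v_1) - \phi(v_2))$, isolates the genuinely dangerous term $S$, which carries the factor $u_2$. The crucial point is that for an \emph{arbitrary} weak solution in the class we only control $\nabla_x u_2$ in $L^2$ (by \eqref{ui_V2}), so any scheme that differentiates $S$ --- in particular the naive energy estimate obtained by testing the $U$-equation against $U$ --- produces a term $\iint |\nabla_x u_2|^2\,V^2$ that cannot be closed. The whole purpose of treating $U$ by duality is to transfer all spatial derivatives onto the (smooth) dual solution and \emph{never} differentiate $u_2$.

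For $V$ I would test its equation against $V$ and integrate, using $v_1,v_2 \in L^\infty$ together with $b,d \ge 1$. Since the $v_i$ are bounded, $s \mapsto s^{c+1}$ is Lipschitz on the relevant interval, so the $r_c$-term contributes $\le C\,\|V\|_{L^2}^2$; the hypothesis $d \ge 1$ gives $|u_1^d - u_2^d| \le d\,(u_1^{d-1} + u_2^{d-1})\,|U|$, so, after writing $v_1 u_1^d - v_2 u_2^d = v_1(u_1^d - u_2^d) + u_2^d\,V$, the remaining contributions are $\int_\Omega u_2^d\,V^2$ and $\int_\Omega (u_1^{d-1} + u_2^{d-1})\,|U|\,|V|$. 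Both are handled by H\"older's inequality followed by Gagliardo--Nirenberg: because $d < a$ and $s_0 > 1 + N/2$, the powers $u_i^{d}$ and $u_i^{d-1}$ lie in high Lebesgue spaces (coming from $u_i \in L^{\max(a s_0,\,a+2)}_{\mathrm{loc}}$), so these products are bounded by $\eta\,\|\nabla_x V\|_{L^2}^2 + C(t)\,\|V\|_{L^2}^2 + \|U\|_{L^2}^2$ with a weight $C(t)$ that is integrable in time and with $\eta$ at our disposal. No derivative of $u_i$ enters at any point. Absorbing $\eta\,\|\nabla_x V\|_{L^2}^2$ into the dissipation and applying Gr\"onwall yields, for every $T$ (writing $Q_T := [0,T]\times\Omega$),
\[
\sup_{[0,T]}\|V\|_{L^2}^2 + \|\nabla_x V\|_{L^2(Q_T)}^2 \le C_4(T)\,\big(\|V_{in}\|_{L^2}^2 + \|U\|_{L^2(Q_T)}^2\big).
\]

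For $U$ I would run the duality method. Fix $T$, set $\mu := d_u + \phi(v_1)$, so that $d_u \le \mu \le d_u + \|\phi(v_1)\|_{L^\infty}$, and split $R_u = \theta\,U + \sigma$, where $\theta := r_u - r_a\,\tfrac{u_1^{a+1}-u_2^{a+1}}{U} - r_b\,v_1^b$ satisfies $|\theta| \le r_u + C\max(u_1,u_2)^a + C$ and hence lies in $L^{s_0}_{\mathrm{loc}}$ (since $u_i^a \in L^{s_0}_{\mathrm{loc}}$), while $\sigma := -r_b\,u_2\,(v_1^b - v_2^b)$ satisfies $|\sigma| \le C\,u_2\,|V|$ (here $b \ge 1$ is used so that $s \mapsto s^b$ is Lipschitz on the range of the $v_i$). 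For a test function I solve the backward problem $-\pa_t w - \mu\,\Delta_x w = U$ on $[0,T]$ with $w(T) = 0$ and homogeneous Neumann conditions. Testing the $U$-equation against $w$ and integrating by parts so that the Laplacian falls on $w$ gives
\[
\|U\|_{L^2(Q_T)}^2 = \iint_{Q_T} S\,\Delta_x w + \iint_{Q_T} \sigma\,w + \iint_{Q_T} \theta\,U\,w + \int_\Omega U_{in}\,w(0),
\]
and the classical duality estimate (test the dual equation against $-\Delta_x w$) gives $\|\Delta_x w\|_{L^2(Q_T)} \le d_u^{-1}\|U\|_{L^2(Q_T)}$, together with control of $\|w\|_{L^\infty(0,T;H^1)}$, $\|w\|_{L^2(0,T;H^2)}$ and $\|w(0)\|_{L^2}$ by $\|U\|_{L^2(Q_T)}$. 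Since $|S|, |\sigma| \le C\,u_2\,|V|$ --- and here $u_2$ itself, never its gradient, appears --- the first two terms are bounded by $C\,d_u^{-1}\,\|u_2\,V\|_{L^2(Q_T)}\,\|U\|_{L^2(Q_T)}$, the initial term by $\tfrac14\|U\|_{L^2(Q_T)}^2 + C\,\|U_{in}\|_{L^2}^2$, and $\|u_2\,V\|_{L^2(Q_T)}^2 = \iint u_2^2\,V^2$ is controlled exactly as in the $V$-step by $\eta\,\|\nabla_x V\|_{L^2(Q_T)}^2 + C\,\|V\|_{L^2(Q_T)}^2$. The term $\iint \theta\,U\,w$ is the decisive one: by H\"older and the parabolic Sobolev embedding it is bounded by $C\,\|\theta\|_{L^{s_0}(Q_T)}\,\|U\|_{L^2(Q_T)}\,\|w\|_{L^{2s_0'}(Q_T)} \le C\,\|\theta\|_{L^{s_0}(Q_T)}\,\|U\|_{L^2(Q_T)}^2$, and the exponent $2s_0'$ (with $s_0'$ the conjugate of $s_0$) is \emph{exactly} admissible under $s_0 > 1 + N/2 = (N+2)/2$, which makes $\theta$ a subcritical potential; this is where the hypothesis on $s_0$ is sharply used.

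Combining the two bounds is where the main work lies. The energy estimate for $V$ is differential in time while the duality estimate for $U$ is global on $[0,T]$, and the reaction constants (notably $\|\theta\|_{L^{s_0}(Q_T)}$ and the Gr\"onwall factor) grow with $T$, so one cannot simply absorb $\|U\|_{L^2(Q_T)}^2$. I would close the system first on a short interval $[0,T_0]$: because the weights built from $u_i \in L^{\max(a s_0,\,a+2)}_{\mathrm{loc}}$ are fixed integrable functions, the quantities $\|\theta\|_{L^{s_0}(Q_{T_0})}$, the cross-term integrals, and $T_0\,\sup_{[0,T_0]}\|V\|_{L^2}^2$ are small for $T_0$ small, which lets every cross term and the subcritical potential term be absorbed, giving $\|U\|_{L^2(Q_{T_0})}^2 + \sup_{[0,T_0]}\|V\|_{L^2}^2 + \|\nabla_x V\|_{L^2(Q_{T_0})}^2 \le C\,(\|U_{in}\|_{L^2}^2 + \|V_{in}\|_{L^2}^2)$. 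Covering $[0,T]$ by finitely many such intervals (their length controlled only by the absolute continuity of the integrals of these fixed weights) and iterating --- equivalently, inserting an exponential-in-time weight $e^{-\kappa t}$ with $\kappa$ large to kill the $T$-dependence of the coupling constant in one stroke --- propagates the estimate to time $T$ with a constant $C_T$, which is the claimed stability bound. Uniqueness in the stated class is then immediate: taking $(u_{1,in},v_{1,in}) = (u_{2,in},v_{2,in})$ forces $\|U\|_{L^2(Q_T)} = \|V\|_{L^2(Q_T)} = 0$ for every $T$. I expect the most delicate points to be this global-versus-differential bookkeeping (in the iteration, restarting $U$ from its controlled trace, using $U \in C([0,T];L^2)$) and the verification that the Gagliardo--Nirenberg and parabolic Sobolev exponents are admissible in every dimension under $s_0 > 1 + N/2$ and $d < a$.
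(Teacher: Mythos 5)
Your proposal addresses only the stability estimate, i.e.\ the fourth and last step of the theorem. The existence of a global weak solution (obtained in the paper via the cutoff construction of $h,k$, the singularly perturbed system \eqref{eq:3rd} and the entropy/compactness machinery of Proposition \ref{theo_sing1}) and the two regularity upgrades (the $W^{2,1}_s$ bootstrap through Corollary \ref{theo:weak_parabolic_reg} and the Schauder step) are entirely missing, and they constitute the bulk of the statement. Moreover, your guiding premise for the part you do treat misreads the hypotheses: the stability claim is made \emph{in the last setting}, and the class of competitors ($u_i\in L^{\max(as_0,a+2)}_{\text{loc}}$ together with \eqref{ui_V2}) is chosen precisely so that the second and third steps of the paper apply to \emph{any} weak solution in it. Hence $u_i$, $v_i$, $\nabla_x u_i$, $\nabla_x v_i$ and $\phi(v_i)$ are continuous (indeed H\"older) on $[0,T]\times\ov{\Omega}$ before the stability computation begins, and the paper then closes the ``naive'' energy estimate you dismiss --- testing the difference equations against $u_1-u_2$ and $v_1-v_2$ --- in a few lines with Gronwall, since the coefficients $\nabla_x\overline{\phi(v)}$, $\nabla_x\overline{u}$, $\overline{u}$ that you regard as uncontrollable are bounded. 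The term $\iint|\nabla_x u_2|^2\,V^2$ that you fear never appears in the paper's computation (no derivative falls on $u_2$ alone there either), and your refusal to invoke the regularity is what forces the duality detour.

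That detour, as written, does not close. To bound $\|S\|_{L^2(Q_T)}$ and $\|\sigma\|_{L^2(Q_T)}$ you must control $\iint u_2^2V^2$; with only $V\in L^\infty(0,T;L^2)\cap L^2(0,T;H^1)$, parabolic Gagliardo--Nirenberg gives $V^2\in L^{(N+2)/N}(Q_T)$, so you need $u_2^2\in L^{(N+2)/2}$, i.e.\ $u_2\in L^{N+2}(Q_T)$ --- but the class only provides $u_2\in L^{\max(as_0,a+2)}$, and $\max(as_0,a+2)$ can be far below $N+2$ (take $N=10$, $a=1/2$, $d=1/4$, $s_0$ slightly above $6$: then $\max(as_0,a+2)\approx 3$). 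The analogous term $\iint u_i^{d-1}|U|\,|V|$ in your $V$-estimate requires $as_0\ge(d-1)(N+2)$, which also fails for admissible parameters ($a\ge2$, $d$ close to $a$, $s_0$ close to $1+N/2$); so the claim that ``the powers $u_i^d$ and $u_i^{d-1}$ lie in high Lebesgue spaces'' is correct for $u_i^d$ (since $as_0/d>s_0>1+N/2$) but not in general for the quadratic-in-$u_2$ and $u_i^{d-1}$ terms. Two smaller points: your H\"older triple $\|\theta\|_{L^{s_0}}\|U\|_{L^2}\|w\|_{L^{2s_0'}}$ has reciprocal exponents summing to $1+\tfrac{1}{2s_0}>1$; the correct third exponent is $2s_0/(s_0-2)$, which is still (sharply) admissible under $s_0>1+N/2$ given $w\in L^\infty(H^1)\cap L^2(H^2)$, so this is repairable. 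And solving the backward dual problem with the merely measurable coefficient $\mu=d_u+\phi(v_1)$, with enough regularity of $w$ to justify your integrations by parts, requires the smoothing/approximation argument behind the paper's Lemma \ref{le:duality}. In short: either you first establish the regularity bootstrap --- after which your duality machinery is unnecessary and the paper's short energy argument applies --- or your stability proof has genuine exponent obstructions in general dimension.
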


\begin{remark} The first setting provides global weak solutions. In the second setting, 
those solutions are shown to be strong, in the sense that all derivatives appearing in the equations lie in some $L^p$ with $p\in [1, \infty]$. Finally, in the last setting, those solutions are shown to be classical, in the sense that all derivatives appearing in the equations are continuous. Stability and uniqueness (in the class of weak solutions satisfying some extra regularity) holds when
the assumptions on the parameters imply that weak solutions are classical solutions.
\end{remark}

Then, our second theorem writes
\medskip

\begin{theorem}\label{theo_ex2}
Let $\Omega$ be a smooth bounded domain of $\mathbb{R}^N$ ($N\in \N^\ast$). 
We suppose that Assumption A on the coefficients of system (\ref{sku1}) -- (\ref{sku2}) holds.
We moreover suppose that $a \le d$, $a\le 1$, $d \le 2$.
 Finally, we consider initial data $u_{in}\ge 0$, $v_{in}\ge 0$ such that
 $u_{in} \in L^{2}(\Omega)$, 
 $v_{in}\in L^\infty(\Omega)\cap W^{2, 1 + 2/d}(\Omega)$. If $1 + 2/d \ge 3$ (i.-e. $d\le 1$), we also assume
 the compatibility condition \eqref{compat_v}.
\medskip

Then, there exists a (global, with nonnegative components) weak solution $(u,v)$ of system (\ref{sku1}) -- (\ref{sku4}) in the sense of Definition~\ref{defimer} [In particular, $(u,v) \in L_{\text{loc}}^{\max(1+a,d)}(\R_+\times\ov{\Omega})\times L_{\text{loc}}^{\infty}(\R_+\times\ov{\Omega})$ 
and $\nabla_x u$, $\nabla_x v$, $\nabla_x\, [\phi(v)\,u]$ lie in $L_{\text{loc}}^{1}(\R_+\times\ov{\Omega})$].
\par
 Moreover, $(u,v)$ lies in $L^{2}_{\text{loc}}(\R_+\times\ov{\Omega})\times L^\infty_{\text{loc}}(\R_+\times\ov{\Omega})$,
 $\nabla_x v \in L^{2 + \eta}_{\text{loc}}(\R_+\times\ov{\Omega})$ for some $\eta>0$,
 $u$ satisfies (for all $T>0$, and for some $p>0$)
 \begin{equation}\label{u_Vp}
\sup_{t\in [0,T]} \int_\Omega u (t) < +\infty \qquad \text{;} \qquad \int_0^T \int_\Omega |\nabla_x (u^{p/2})|^2<+\infty \, .
\end{equation} 
\end{theorem}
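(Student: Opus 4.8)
The plan is to construct the solution as the limit of solutions of a regularized problem and to justify the passage to the limit through a chain of a priori bounds that are uniform in the regularization. A convenient scheme is to replace $\phi$ by a bounded truncation, to regularize the reaction terms, and to add a vanishing viscosity so that the operator $-\Delta_x((d_u+\phi(v))\,\cdot)$ becomes uniformly and non-degenerately parabolic; for such a scheme global smooth solutions exist by standard quasilinear parabolic theory. Because both reaction terms are quasi-positive (each subtracted contribution carries the corresponding unknown as a factor), the approximate components remain nonnegative. The first uniform bound is an $L^\infty$ bound on $v$: since $u\ge 0$ and $r_d>0$, the second equation yields $\pa_t v-d_v\,\Delta_x v\le v\,(r_v-r_c\,v^c)$, whose right-hand side is nonpositive once $v\ge(r_v/r_c)^{1/c}$, so the maximum principle gives $\|v\|_{L^\infty}\le\max\bigl(\|v_{in}\|_{L^\infty},(r_v/r_c)^{1/c}\bigr)$. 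In particular $A:=d_u+\phi(v)$ obeys $d_u\le A\le A^*$ with $A^*$ depending only on $\|v_{in}\|_{L^\infty}$ and $\phi$. Integrating the first equation over $\Omega$ and using $\int_\Omega u^{1+a}\ge|\Omega|^{-a}\bigl(\int_\Omega u\bigr)^{1+a}$ (Jensen) leads to $M'(t)\le r_u\,M(t)-c\,M(t)^{1+a}$ for $M(t)=\int_\Omega u(t)$, whence $\sup_{t\le T}\int_\Omega u(t)<\infty$; this is the first half of \eqref{u_Vp}, and by Jensen it also gives $\sup_{t\le T}\int_\Omega u^p<\infty$ for every $0<p<1$.

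To bound $u$ in $L^2$ I would use a duality argument. Writing the first equation as $\pa_t u-\Delta_x(A\,u)=u\,(r_u-r_a\,u^a-r_b\,v^b)$ and setting $\tilde u=e^{-r_u t}\,u$, one finds $\pa_t\tilde u-\Delta_x(A\,\tilde u)=-e^{-r_u t}\,(r_a\,u^{1+a}+r_b\,u\,v^b)\le 0$. Since $A$ is valued in the fixed compact interval $[d_u,A^*]$, the standard $L^2$ duality lemma for operators of the form $\pa_t-\Delta_x(A\,\cdot)$ with Neumann conditions applies to the subsolution $\tilde u$ and produces $\|\tilde u\|_{L^2((0,T)\times\Omega)}\le C_T\,\|u_{in}\|_{L^2(\Omega)}$, hence $\|u\|_{L^2((0,T)\times\Omega)}\le e^{r_u T}C_T\,\|u_{in}\|_{L^2(\Omega)}$. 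This is where the hypothesis $u_{in}\in L^2$ is used.

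Next I would improve the regularity of $v$ and then close the gradient bound on $u$. Since $d\le 2$, the source $r_d\,u^d$ lies in $L^{2/d}$ with $2/d\ge 1$, so the full right-hand side of the $v$-equation is bounded in $L^{2/d}_{\text{loc}}$ (using $v\in L^\infty$). Combining the energy bound $\nabla_x v\in L^2_{\text{loc}}$ with a Caccioppoli/reverse-H\"older inequality on parabolic cylinders, in which the $L^\infty$ bound on $v$ controls the oscillation factor, a Gehring--Meyers self-improvement yields $\nabla_x v\in L^{2+\eta}_{\text{loc}}$ for some (possibly small) $\eta>0$. For the gradient bound on $u$ I would test the first equation with $u^{p-1}$ for a small $0<p<1$, which after integration in $(t,x)$ gives
\[
p(1-p)\int_0^T\!\!\int_\Omega A\,u^{p-2}|\nabla_x u|^2 = \int_\Omega u^p(T)-\int_\Omega u_{in}^p - p(1-p)\int_0^T\!\!\int_\Omega u^{p-1}\,\phi'(v)\,\nabla_x u\cdot\nabla_x v - p\int_0^T\!\!\int_\Omega u^p\,(r_u-r_a\,u^a-r_b\,v^b).
\]
The point specific to the range $0<p<1$ is that the dissipative left-hand side is nonnegative while the boundary term $\int_\Omega u^p(T)$ on the right is already bounded by the mass estimate, so the dissipation is controlled by the remaining terms rather than driving them. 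The reaction contributions are finite since $u\in L^2$ and $p+a<2$ (here $a\le 1$ is used), while the cross term, after Young's inequality, is absorbed modulo a multiple of $\int_0^T\int_\Omega u^p\,|\nabla_x v|^2$; this last quantity is finite by H\"older once $p\le 2\eta/(2+\eta)$, using exactly $u\in L^2$ and $\nabla_x v\in L^{2+\eta}$. With $A\ge d_u$ this delivers $\int_0^T\int_\Omega|\nabla_x(u^{p/2})|^2<\infty$, completing \eqref{u_Vp}.

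It remains to pass to the limit in the regularization. The uniform bounds $u\in L^2$, $\nabla_x(u^{p/2})\in L^2$, $v\in L^\infty$, $\nabla_x v\in L^{2+\eta}$, together with the time-derivative bounds read off from the equations, allow an Aubin--Lions argument to extract a subsequence along which $u^{p/2}$ and $v$ converge strongly, hence $u$ and $v$ converge almost everywhere (up to a further subsequence). All reaction terms then converge by almost-everywhere convergence and uniform integrability. The only genuinely delicate term is the cross-diffusion flux $\nabla_x[\phi(v)\,u]=\phi(v)\,\nabla_x u+u\,\phi'(v)\,\nabla_x v$: one checks $\nabla_x u=\tfrac2p\,u^{1-p/2}\,\nabla_x(u^{p/2})\in L^1_{\text{loc}}$ (since $u\in L^2$ and $1-p/2\le 1$) and $u\,\nabla_x v\in L^{1+}_{\text{loc}}$ (since $u\in L^2$ and $\nabla_x v\in L^{2+\eta}$), so that the strong convergence of $u$ and $v$ against the weak convergence of the gradients suffices to identify the limiting flux. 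I expect the main obstacle to be precisely the simultaneous closure of the $L^2$ duality bound and the weighted gradient bound: these are coupled through the cross-diffusion term, and it is the extra integrability $\nabla_x v\in L^{2+\eta}$ — available only because $d\le 2$ places the source of the $v$-equation in $L^{2/d}$ with $2/d\ge 1$ — that creates just enough room to choose $p>0$ and break the circularity.
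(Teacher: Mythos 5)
Your overall architecture (truncate $\phi$ using the $L^\infty$ bound on $v$, maximum principle for $v$, duality for $u$, a small-power $p\in\,]0,1[$ entropy/test-function estimate for $\nabla_x(u^{p/2})$, then Aubin--Lions) parallels the paper's, though the approximation scheme differs: the paper never works with the cross-diffusion system directly, but with a singular perturbation by a \emph{semilinear} three-component reaction-diffusion system (\ref{eq:3rd}), for which global strong solutions are classical, and all estimates (duality, entropy) are performed rigorously at the $\varepsilon$-level. Your vanishing-viscosity scheme leaves the hard point -- global existence of smooth solutions for a quasilinear cross-diffusion system, which is the very issue at stake -- to an unproven appeal to ``standard quasilinear parabolic theory''. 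That is a presentational weakness; the genuine mathematical gap is elsewhere.

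The gap is in the duality step and it is fatal at the endpoint exponents that the theorem explicitly allows, namely $d=2$ (and $a=1$). Your duality argument yields only $\|u\|_{L^2}\le C_T$, whereas the paper's Lemma \ref{le:duality} (the \emph{improved} duality estimate of \cite{CDF}, a perturbation of the constant-coefficient case exploiting $d_A\le M^\varepsilon\le d_A+d_B$) yields $\|u\|_{L^{p^\ast}}\le C_T$ for some $p^\ast$ \emph{strictly} bigger than $2$. This strict gain is used twice, and neither use survives with $L^2$ alone when $d=2$. First, for the regularity of $v$: with $u\in L^{p^\ast}$, the source $u^d$ of the $v$-equation lies in $L^{p^\ast/d}$ with $p^\ast/d>1$, so maximal regularity plus interpolation with $v\in L^\infty$ gives $\nabla_x v\in L^{2p^\ast/d}$ with $2p^\ast/d>2$; with only $u\in L^2$ and $d=2$ the source is merely $L^1$, and no Gehring--Meyers or maximal-regularity mechanism produces $\nabla_x v\in L^{2+\eta}$ from an $L^1$ source -- the reverse-H\"older self-improvement requires the inhomogeneity in a space strictly better than $L^1$, and there exist bounded functions $v$ with $\Delta_x v\in L^1$, $\nabla_x v\in L^2$ (by the energy identity, exactly as you note), yet $\nabla_x v\notin L^{2+\eta}$ for any $\eta>0$. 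Since your choice of the exponent $p\le 2\eta/(2+\eta)$ in the gradient estimate for $u$ requires $\eta>0$, the whole chain collapses at $d=2$ (for $d<2$ your argument does close, with $\eta=4/d-2\to 0$). Second, for the passage to the limit: when $a=1$ or $d=2$, the reaction terms $u^{1+a}$ and $u^d$ are quadratic, and an $L^2$ bound on the approximations gives only an $L^1$ bound on these terms; almost-everywhere convergence plus an $L^1$ bound does not give the uniform integrability you invoke, so the limit of the weak formulation cannot be identified. The paper's Remark 2 makes exactly this point: the compactness of $u^{1+a}$ and $u^d$ is obtained \emph{thanks to} the $L^{p}$ estimate for some $p>2$ from the duality lemma. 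To repair your proof you must replace the classical $L^2$ duality estimate by the improved one, i.e.\ prove (or cite) that the constant-coefficient parabolic regularity constant $C_{m,q}$ in \eqref{heat_estimate} can be made to satisfy \eqref{condition_constant} for some $q<2$, which is precisely the content of Lemma \ref{le:duality}.
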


Those existence theorems are consequences of propositions showing the convergence in a singular perturbation problem.
This problem is analogous to system (\ref{skktinf}) in the case of the Shigesada-Kawasaki-Teramoto model. It writes: 

\begin{equation} \label{eq:3rd} \left\{\begin{aligned}
	\partial_{t} & u_{A}^\epsilon - d_A \,\Delta_x u_{A}^\epsilon = [r_u-r_{a}\,(u_{A}^\epsilon+u_{B}^\epsilon)^a-r_{b}\,(v^\epsilon)^b]\,u_{A}^\epsilon + \frac{1}{\epsilon}\,[k(v^\epsilon)\,u_{B}^\epsilon-h(v^\epsilon)\,u_{A}^\epsilon] \qquad \text{in } \R_+\times\Omega,\\
	\partial_{t} & u_{B}^\epsilon - (d_A+d_B)\, \Delta_x u_{B}^\epsilon = [r_u-r_{a}\,(u_{A}^\epsilon+u_{B}^\epsilon)^a-r_{b}\,(v^\epsilon)^b]\,u_{B}^\epsilon - \frac{1}{\epsilon}\,[k(v^\epsilon)\,u_{B}^\epsilon-h(v^\epsilon)\,u_{A}^\epsilon] \qquad \text{in } \R_+\times\Omega,\\
	\partial_{t} & v^\epsilon - d_v \,\Delta_x v^\epsilon = [r_v-r_{c}\,(v^\epsilon)^c-r_{d}\,(u_{A}^\epsilon +u_{B}^\epsilon)^d]\,v^\epsilon \qquad \text{in } \R_+\times\Omega,\\
\end{aligned} \right.\end{equation}
where $h$ and $k$ lie in $C^1(\R_+)$ and satisfy, for some $h_0>0$,
\begin{equation} \label{adem}
d_A+d_B\, \frac{h(v)}{h(v)+k(v)} = d_u +\phi(v), \qquad h(v) \ge h_0, \qquad k(v)\ge h_0, \qquad \text{for all } v\in \R_+ .
\end{equation}
\par
The existence of $h$ and $k$ in $C^1(\R_+)$ satisfying (\ref{adem}) is a part of the proof of 
Theorems \ref{theo_ex1} and \ref{theo_ex2}.
\par
We add homogeneous Neumann boundary conditions 
\begin{equation}\label{Neumann_3RD}
\nabla_x u_A^\epsilon \cdot n = \nabla_x u_B^\epsilon \cdot n = \nabla_x v^\epsilon \cdot n = 0  \qquad \text{on } \R_+\times\pa\Omega.
\end{equation}
We also add initial data to (\ref{eq:3rd}), (\ref{Neumann_3RD}) thanks to a regularization process that we now describe. Let $(\rho^\var)_{\var>0}$ be a family of mollifiers on $\mathbb{R}^N$,
 and for all $\var>0$, let $\chi^\var$ be a cutoff function (given by Urysohn's lemma) lying in 
$C^{\infty}(\mathbb{R}^N)$, and satisfying
\begin{equation*}
0\le \chi^\var\le 1 \text{ in } \mathbb{R}^N, \qquad \chi^\var=1 \text{ inside } \{x\in\Omega:d(x,\partial\Omega)>2\var\}, \qquad \chi^\var=0 \text{ outside } \{x\in\Omega:d(x,\partial\Omega)>\var\}.
\end{equation*} 
Then, given two nonnegative functions (lying in $L^1(\Omega)$) $u_{in},\, v_{in}$, we define
\begin{equation}
u_{A,in}:=\frac{k(v_{in})}{h(v_{in})+k(v_{in})}\,u_{in}, \qquad u_{B,in}:=\frac{h(v_{in})}{h(v_{in})+k(v_{in})}\,u_{in} \qquad \text{ on } \Omega,
\end{equation}
and extend by zero those functions on $\mathbb{R}^N-\Omega$ (so that the convolution on $\mathbb{R}^N$ can be used). 
\par 
We therefore add to (\ref{eq:3rd}), (\ref{Neumann_3RD}) the regularized initial data 
(defined on $\Omega$):
%(writing $\var_{\Omega}=\var \times {\mathbf{1}}_\Omega$)
\begin{equation}\label{initial_3RD}
u_A^\var(0,\cdot) = u_{A,in}^\var := (\chi^\var (u_{A,in}\ast\rho^\var)+\epsilon)|_{\Omega},\quad u_B^\var(0,\cdot) = u_{B,in}^\var := (\chi^\var (u_{A,in}\ast\rho^\var)+\var)|_{\Omega},\quad v^\var (0,\cdot)= v_{in}^\var := v_{in} + \var .
\end{equation}
We shall use in our propositions related to the system (\ref{eq:3rd}), (\ref{Neumann_3RD}),
(\ref{initial_3RD}) the
\medskip

\noindent
{\bf{Assumption B}}: $d_A, d_B, d_u, d_v>0$, $r_u,r_v,r_a,r_b,r_c,r_d>0$, $a,b,c,d>0$. The functions $\phi$, $h$ and $k$ lie in 
$C^1(\R_+)$ and satisfy (\ref{adem}).
\medskip

For the singular perturbation problem with a given $\var \in ]0,1[$, we shall consider strong solutions
defined in the following way:

\begin{definition}\label{defimerd}
Let $\Omega$ be a smooth bounded domain of $\mathbb{R}^N$ ($N\in\N^*$). 
We suppose that Assumption B on the coefficients of system (\ref{eq:3rd}), (\ref{Neumann_3RD}), (\ref{initial_3RD}) holds, and that $u_{in},\,v_{in}$ are
 two nonnegative functions lying in $L^1(\Omega)$. We finally consider $\var\in ]0,1[$.
\smallskip

  A set of nonnegative functions $(u_A^\epsilon,u_B^\epsilon,v^\epsilon)$ such that $u^\epsilon_A:=u^\epsilon_A(t,x)$, $u^\epsilon_B:=u^\epsilon_B(t,x)$ lie in $L_{\text{loc}}^{\max(1+a,d)}(\R_+\times\ov{\Omega})$,
 and $v^\epsilon :=v^\epsilon(t,x)$ lie in $L_{\text{loc}}^{\infty}(\R_+\times\ov{\Omega})$, will be called a {\bf strong solution} of (\ref{eq:3rd}), (\ref{Neumann_3RD}), (\ref{initial_3RD}) if $\pa_t u_A^\epsilon$, $\pa_t u_B^\epsilon$, $\pa_t v^\epsilon$ and $\pa_{x_i,x_j} u_A^\epsilon$, $\pa_{x_i,x_j} u_B^\epsilon$, $\pa_{x_i,x_j} v^\epsilon$ ($i,\,j=1..N$) lie in $L^1_{loc}(\R_+\times\ov{\Omega})$ and equations \eqref{eq:3rd}, \eqref{Neumann_3RD} and \eqref{initial_3RD} are satisfied almost everywhere in $\R_+\times\Omega$ (resp. $\R_+\times\pa\Omega$, $\Omega$).
\end{definition}

Our results concerning the behavior when $\var\to 0$ of the strong
solutions of system \eqref{eq:3rd}, (\ref{Neumann_3RD}), \eqref{initial_3RD} are summarized in the two following propositions (corresponding to the respective cases $d<a$ and $d\ge a$):
\medskip

\begin{propo}\label{theo_sing1}
Let $\Omega$ be a smooth bounded domain of $\mathbb{R}^N$ ($N\in\N^*$). 
We suppose that Assumption B on the coefficients of system (\ref{eq:3rd}), (\ref{Neumann_3RD}), (\ref{initial_3RD}) holds,
 and assume moreover that $d<a$. Finally, we consider initial data $u_{in}\ge 0$, $v_{in}\ge 0$
 such that 
 $u_{in} \in L^{p_0}(\Omega)$,
 $v_{in}\in L^\infty(\Omega)\cap W^{2, 1 + p_0/d}(\Omega)$ for some $p_0>1$. If $1 + p_0/d \ge 3$, we also assume
 the compatibility condition \eqref{compat_v}.
\medskip

Then, for any $\var \in ]0,1[$, there exists a strong (global, with nonnegative components) solution $(u_A^\var,u_B^\var, v^\var)$  in the sense of Definition \ref{defimerd} to system
(\ref{eq:3rd}), (\ref{Neumann_3RD}), (\ref{initial_3RD}).
\medskip

 Moreover, when $\var\to 0$, $(u_A^\epsilon,u_B^\epsilon,v^\epsilon)$ converges, 
up to extraction of a subsequence, for almost every  $(t,x) \in \mathbb{R}_+\times\Omega$ to a limit  
$(u_A,u_B,v)$ lying in $L^{p_0+a}_{\text{loc}}(\R_+\times\ov{\Omega})\times L^{p_0+a}_{\text{loc}}(\R_+\times\ov{\Omega})\times L^\infty_{\text{loc}}(\R_+\times\ov{\Omega})$, and such that $u_A\ge$, $u_B\ge 0$, $v\ge 0$. The $L^{\infty}$ estimate on $v$
can be made explicit:
\begin{equation}\label{vex}
0 \le v(t,x) \le \max \left(||v_{in}||_{L^{\infty}(\Omega)}, \left[\frac{r_v}{r_c\,(c+1)}\right]^{1/c} \right)
\qquad {\hbox{ for a.e. }} \quad (t,x) \in \R_+\times\Omega.
\end{equation}
\par
Furthermore, $\nabla_x v$ lies in $L^{2(1+p_0/d)}_{\text{loc}}(\R_+\times\ov{\Omega})$ and the quantity $u:=u_A+u_B$ satisfies $\nabla_x u, \nabla_x (u\,\phi(v)) \in L^{1}_{\text{loc}}(\R_+\times\ov{\Omega})$, 
and for all $p\in]1,p_0]$, $T>0$,
\begin{equation}\label{es:V2}
\sup_{t\in [0,T]} \int_\Omega u^{p_0} (t) < +\infty \qquad \text{and} \qquad \int_0^T \int_\Omega |\nabla_x (u^{p/2})|^2<+\infty.
\end{equation}
\par
 Finally, $h(v(t,x))\,u_A(t,x)=k(v(t,x))\, u_B(t,x)$ for a.e. 
$(t,x) \in \mathbb{R}_+\times\Omega$, and $(u, v)$ is a 
(global, with nonnegative components) weak solution of system (\ref{sku1}) -- (\ref{sku4}) in the sense of Definition \ref{defimer}. 
\end{propo}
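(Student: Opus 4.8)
The plan is to obtain the conclusion by first solving the regularized system \eqref{eq:3rd}, \eqref{Neumann_3RD}, \eqref{initial_3RD} for each fixed $\var \in ]0,1[$, then deriving a priori bounds uniform in $\var$, and finally passing to the limit $\var \to 0$. For fixed $\var$ the regularized data \eqref{initial_3RD} are smooth and bounded away from zero, the diffusion coefficients $d_A$, $d_A + d_B$, $d_v$ are positive constants, and the reaction terms (including the stiff ones $\frac{1}{\var}[k(v)u_B - h(v)u_A]$) are locally Lipschitz and quasi-positive; standard parabolic theory (local existence by a fixed point / semigroup argument, preservation of nonnegativity by quasi-positivity, and continuation up to the first blow-up time) therefore yields a local strong solution in the sense of Definition~\ref{defimerd}, and global existence follows once the bounds below are established. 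The natural starting point is the $L^\infty$ bound on $v^\var$: since $v^\var$ solves a scalar equation whose reaction is bounded above by $v^\var(r_v - r_c (v^\var)^c)$, the comparison principle against the constant supersolution gives \eqref{vex}, uniformly in $\var$.

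Next I would derive the $\var$-uniform estimates on $u^\var := u_A^\var + u_B^\var$. Summing the first two equations of \eqref{eq:3rd} makes the stiff terms cancel, leaving $\pa_t u^\var - \Delta_x(d_A u^\var + d_B u_B^\var) = R^\var u^\var$ with $R^\var := r_u - r_a (u^\var)^a - r_b (v^\var)^b$. The decisive structural fact is that the effective diffusion coefficient $a^\var := (d_A u^\var + d_B u_B^\var)/u^\var$ lies in $[d_A, d_A + d_B]$; a duality estimate of Pierre's type, applied with this bounded coefficient and Neumann data, then produces a $\var$-uniform $L^2$ bound on $u^\var$. This base integrability is upgraded by an entropy/energy estimate: testing the $u^\var$-equation against $(u^\var)^{p-1}$ yields a coercive dissipation $\sim \int |\nabla_x (u^\var)^{p/2}|^2$, while the absorbing term $-r_a (u^\var)^{1+a}$ simultaneously controls $\sup_{t} \int (u^\var)^{p_0}$ and supplies the extra power giving the $L^{p_0 + a}$ bound. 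The $L^\infty$ bound on $v^\var$ together with maximal parabolic regularity for the heat equation (whose source $v^\var(r_v - r_c (v^\var)^c - r_d (u^\var)^d)$ is controlled in $L^{1 + p_0/d}$, using $d < a$ so that $(p_0 + a)/d \ge 1 + p_0/d$) yields $\nabla_x v^\var \in L^{2(1+p_0/d)}$ uniformly in $\var$.

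With these bounds in hand I would pass to the limit. The gradient estimates and the control of the time derivatives give, via an Aubin--Lions type argument, a subsequence along which $u_A^\var, u_B^\var, v^\var$ converge a.e.\ and strongly in suitable $L^p_{\text{loc}}$ spaces; lower semicontinuity transfers \eqref{vex}, \eqref{es:V2} and the membership of $\nabla_x v$, $\nabla_x u$, $\nabla_x(u\,\phi(v))$ to the limit. The equilibrium relation $h(v)\,u_A = k(v)\,u_B$ is obtained from the stiff term: the weak form of the $u_A^\var$-equation shows that $\frac{1}{\var}[k(v^\var)u_B^\var - h(v^\var)u_A^\var]$ is bounded uniformly in $\var$ in the dual of a nice space (its action on a test function $\psi$ equals $\int u_A^\var(-\pa_t\psi - d_A\Delta_x\psi) - \int R^\var u_A^\var\psi - \int u_{A,in}^\var\psi(0)$, all uniformly bounded), so $k(v^\var)u_B^\var - h(v^\var)u_A^\var = \var \times O(1) \to 0$ in distributions; comparing with the a.e.\ limit $k(v)u_B - h(v)u_A$ forces it to vanish. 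Finally, summing the weak formulations of the two $u$-equations cancels the stiff contributions exactly, and the identity \eqref{adem} combined with $u_B = \frac{h(v)}{h(v)+k(v)}\,u$ turns the limiting diffused quantity $d_A u + d_B u_B$ into $(d_u + \phi(v))u$, producing precisely the weak formulation of \eqref{sku1}; the $v^\var$-equation passes to the limit directly, giving a weak solution of \eqref{sku1}--\eqref{sku4} in the sense of Definition~\ref{defimer}.

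I expect the main obstacle to be the closing of the $\var$-uniform energy estimate for $u^\var$ against the cross-diffusion, and the associated passage to the limit in the nonlinear diffusion term. In the energy estimate the coupling produces a term of the form $\int \phi'(v^\var)\,(u^\var)^{p-1}\,\nabla_x u^\var\cdot\nabla_x v^\var$ that must be absorbed into the good dissipation $\int|\nabla_x(u^\var)^{p/2}|^2$ using the $L^{2(1+p_0/d)}$ bound on $\nabla_x v^\var$ and the $L^{p_0}$ bound on $u^\var$; this is exactly where the estimates interlock and where the hypothesis $d < a$ is needed to keep the feedback of $u$ into the $v$-equation subcritical. Passing to the limit in $\Delta_x(d_A u_A^\var + (d_A + d_B)u_B^\var)$ likewise requires enough compactness of the individual components $u_A^\var, u_B^\var$ (not merely of their sum) to identify the product $\phi(v)\,u$ in the limit.
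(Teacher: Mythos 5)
Your overall architecture (fixed-$\var$ existence, uniform bounds, compactness, passage to the limit) matches the paper's, and several steps are sound (the comparison bound \eqref{vex} for $v^\var$, the maximal-regularity bound for $\nabla_x v^\var$ using $d<a$). But the central a priori estimate, as you describe it, does not close, and everything downstream depends on it. You propose to test the summed equation $\pa_t u^\var - \Delta_x\left(d_A u_A^\var + (d_A+d_B)u_B^\var\right) = R^\var u^\var$ against $(u^\var)^{p-1}$ and claim a coercive dissipation $\sim \int |\nabla_x (u^\var)^{p/2}|^2$. This fails: after integration by parts the diffusion term produces
\begin{equation*}
(p-1)\int_\Omega (u^\var)^{p-2}\,\bigl[d_A\nabla_x u^\var + d_B \nabla_x u_B^\var\bigr]\cdot \nabla_x u^\var
= (p-1)\int_\Omega (u^\var)^{p-2}\,\Bigl[ d_A |\nabla_x u^\var|^2 + d_B\, \nabla_x u_B^\var\cdot\nabla_x u^\var \Bigr],
\end{equation*}
and the cross term $d_B\,\nabla_x u_B^\var\cdot\nabla_x u^\var$ has no sign and no independent control (equivalently, the quadratic form $(\xi,\eta)\mapsto d_A|\xi|^2+(2d_A+d_B)\,\xi\cdot\eta+(d_A+d_B)|\eta|^2$ in $(\nabla_x u_A^\var,\nabla_x u_B^\var)$ has determinant $-d_B^2/4<0$, hence is indefinite). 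The idea you are missing is the paper's weighted two-component entropy $\mathscr{E}^\var(t)=\int_\Omega \bigl[h(v^\var)^{p-1}(u_A^\var)^p + k(v^\var)^{p-1}(u_B^\var)^p\bigr]/p$: each component is tested against its own power, so each \emph{constant-coefficient} Laplacian yields a coercive term $\int |\nabla_x (u_C^\var)^{p/2}|^2$, and the weights $h^{p-1},k^{p-1}$ are chosen exactly so that the stiff exchange term contributes with a favorable sign, giving in addition the bound $\frac1\var \| (h(v^\var)u_A^\var)^{p/2}-(k(v^\var)u_B^\var)^{p/2}\|_{L^2}^2\le C_T$. (The terms involving $\pa_t v^\var$ and $|\nabla_x v^\var|^2$ that this entropy generates come from the weights, not from any $\phi'(v)\nabla_x v$ in the diffusion — at the $\var$-level the diffusion coefficients are constants — and they are absorbed using Lemma \ref{prelim_v}; the closing of the loop uses $d<a$ and H\"older, with no duality lemma. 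Note also that Lemma \ref{le:duality} requires $u_{in}\in L^2(\Omega)$, which you do not have when $p_0<2$, and it could never reach $L^{p_0+a}$ for large $p_0$ anyway.)

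This entropy is not a refinement but the sole source of component-wise compactness, which you correctly identify as necessary yet never supply. From the $\frac1\var$ bound one gets $\int_0^T\!\!\int_\Omega |k(v^\var)u_B^\var-h(v^\var)u_A^\var|\le C_T\sqrt{\var}\to 0$ \emph{strongly} in $L^1$, whence $u_A^\var = \bigl[k(v^\var)u^\var + (h(v^\var)u_A^\var-k(v^\var)u_B^\var)\bigr]/(h(v^\var)+k(v^\var))$ converges a.e.\ to $k(v)u/(h(v)+k(v))=:u_A$ (similarly for $u_B$); this is what yields both $h(v)u_A=k(v)u_B$ and the passage to the limit in $\int\!\!\int \nabla_x\psi_1\cdot\nabla_x[d_A u_A^\var+(d_A+d_B)u_B^\var]$, together with the $L^{1+\zeta}$ bounds on $\nabla_x u_A^\var,\nabla_x u_B^\var$, which also come from the entropy dissipation. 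Your alternative distributional argument for the relation $h(v)u_A=k(v)u_B$ can in principle be repaired (weak $L^{p_0+a}$ limits of the components, multiplied by the bounded a.e.-convergent factors $h(v^\var),k(v^\var)$), but it only yields weak convergence of the components, which is insufficient for the a.e.\ convergence of $u^\var$ needed in the nonlinear terms $(u^\var)^{1+a}$, $(u^\var)^d$: Aubin--Lions requires spatial gradient bounds on $u^\var$, and with your energy estimate broken you have none. As written, the proposal therefore has no valid mechanism producing the dissipation bounds, the compactness, or the fast-reaction identification.
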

\medskip

\begin{propo}\label{theo_sing2}
Let $\Omega$ be a smooth bounded domain of $\mathbb{R}^N$ ($N\in\N^*$). 
We suppose that Assumption B on the coefficients of system (\ref{eq:3rd}), (\ref{Neumann_3RD}), (\ref{initial_3RD}) holds,
 and assume moreover that
 $a \le d$, $a \le 1$, $d \le 2$.
  Finally, we consider initial data $u_{in}\ge 0$, $v_{in}\ge 0$ such that
 $u_{in} \in L^{2}(\Omega)$,
 $v_{in}\in L^\infty(\Omega)\cap W^{2, 1 + 2/d}(\Omega)$. If $1 + 2/d \ge 3$ (i.-e. $d\le 1$), we also assume
 the compatibility condition \eqref{compat_v}.
\medskip

Then, for any $\var \in ]0,1[$, there exists a strong (global, with nonnegative components) solution $(u_A^\var,u_B^\var, v^\var)$ in the sense of Definition \ref{defimerd} to system
(\ref{eq:3rd}), (\ref{Neumann_3RD}), (\ref{initial_3RD}).
\medskip

 Moreover, when $\var\to 0$, $(u_A^\epsilon,u_B^\epsilon,v^\epsilon)$ converges, 
 up to extraction of a subsequence, for almost every  $(t,x) \in \mathbb{R}_+\times\Omega$ to a limit  
$(u_A,u_B,v)$ lying in $L^{2}_{\text{loc}}(\R_+\times\ov{\Omega})\times L^{2}_{\text{loc}}(\R_+\times\ov{\Omega})\times L^\infty_{\text{loc}}(\R_+\times\ov{\Omega})$,
and such that $u_A\ge 0$, $u_B\ge 0$, $v\ge 0$.
The explicit $L^{\infty}$ estimate on $v$ given by (\ref{vex}) also holds.
Furthermore, 
 $\nabla_x v$ lies in $L^{2 +\eta}_{\text{loc}}(\R_+\times\ov{\Omega})$ for some $\eta>0$,
 and the quantity $u:=u_A+u_B$ satisfies $\nabla_x u, \nabla_x (u\,\phi(v)) \in L^{1}_{\text{loc}}(\R_+\times\ov{\Omega})$
 and for some $p>0$ (and all $T>0$),
\begin{equation}\label{es:Vp}
 \sup_{t\in [0,T]} \int_\Omega u (t) < +\infty \qquad \text{and} \qquad \int_0^T \int_\Omega |\nabla_x (u^{p/2})|^2<+\infty.
\end{equation}
\par
 Finally,  $h(v(t,x))\,u_A(t,x)=k(v(t,x))\, u_B(t,x)$ for a.e. $(t,x) \in \mathbb{R}_+\times\Omega$, and $(u, v)$ is a (global, with nonnegative components) weak solution
 of system (\ref{sku1}) -- (\ref{sku4}) in the sense of Definition \ref{defimer}.
\end{propo}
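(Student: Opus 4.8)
The plan is to obtain $(u,v)$ as a singular limit $\var\to0$ of strong solutions of the three-equation system \eqref{eq:3rd}, following the scheme behind Proposition \ref{theo_sing1} but in the low-integrability regime $a\le1$, $d\le2$. First I would construct $h,k$. Since $v^\var$ will obey the data-dependent bound \eqref{vex}, one may replace $\phi$ by a bounded modification coinciding with it on $[0,V]$ (with $V$ the right-hand side of \eqref{vex} increased by $1$), then fix $d_A\in(0,d_u)$ and $d_B$ so large that $d_A<d_u+\phi<d_A+d_B$ there; solving \eqref{adem} for $h/(h+k)$ and rescaling gives $h,k\in C^1(\R_+)$ with $h,k\ge h_0>0$. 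For each fixed $\var\in\,]0,1[$ the system \eqref{eq:3rd} has constant positive diagonal diffusion and $C^1$ quasi-positive reaction, so local strong solutions exist by parabolic maximal regularity and a fixed point and stay nonnegative, the data \eqref{initial_3RD} being smooth and bounded below by $\var$. Comparison on the $v^\var$-equation, discarding the favourable term $-r_d(u^\var)^d$, yields \eqref{vex}, and together with the uniform bounds below this prevents blow-up, giving global solutions in the sense of Definition \ref{defimerd}.

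The heart of the proof is a family of estimates uniform in $\var$. Adding the first two equations gives, for $u^\var:=u_A^\var+u_B^\var$, the relation $\pa_t u^\var-\Delta_x(d_Au^\var+d_Bu_B^\var)=u^\var(r_u-r_a(u^\var)^a-r_b(v^\var)^b)$, in which the stiff $1/\var$ terms cancel and the effective diffusion coefficient lies in $[d_A,d_A+d_B]$. Setting $w:=e^{-r_ut}u^\var$ turns this into a nonnegative subsolution $\pa_t w-\Delta_x(Dw)\le0$, and the duality lemma (dual problem $-\pa_t\varphi-D\Delta_x\varphi=H\ge0$, $\varphi(T)=0$, with constants depending only on $d_A,d_B,T,\Omega$) gives $\|u^\var\|_{L^2((0,T)\times\Omega)}\le C_T$; integrating the same equation gives the mass bound in \eqref{es:Vp}. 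Since $u_{in}\in L^2\subset L\log L$, the initial entropy $\int_\Omega(u_{A,in}^\var\ln u_{A,in}^\var+u_{B,in}^\var\ln u_{B,in}^\var)$ is bounded uniformly in $\var$; testing the $u_A^\var$- and $u_B^\var$-equations against $\ln u_A^\var,\ln u_B^\var$ then controls $\int_0^T\!\!\int_\Omega(|\nabla_x\sqrt{u_A^\var}|^2+|\nabla_x\sqrt{u_B^\var}|^2)$ and the fast-reaction dissipation $\frac1\var\int_0^T\!\!\int_\Omega(hu_A^\var-ku_B^\var)\ln\frac{hu_A^\var}{ku_B^\var}\ge0$, the lower-order terms (the rate $r_u$, the $v$-dependence of $h,k$, and the sub-$L^2$ contributions $(u^\var)^{1+a}$, legitimate since $a\le1$) being absorbed by Gronwall and the $L^2$ bound. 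From $|\nabla_x\sqrt{u^\var}|^2\le2(|\nabla_x\sqrt{u_A^\var}|^2+|\nabla_x\sqrt{u_B^\var}|^2)$ this gives \eqref{es:Vp} with $p=1$. Multiplying the $v^\var$-equation by $v^\var$ already gives $\nabla_xv^\var\in L^2$; the sharper $\nabla_xv^\var\in L^{2+\eta}$ requires a bootstrap coupling an improved duality/energy bound on $u^\var$ with maximal parabolic regularity for $v^\var$ (source $(u^\var)^d$, the exponent in $v_{in}\in W^{2,1+2/d}$ matching its integrability), the conditions $a\le1$, $d\le2$ being exactly what forces the exponents to increase and the scheme to close, uniformly in $\var$.

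Finally I would let $\var\to0$. The obstruction to compactness is that $\pa_tu_A^\var,\pa_tu_B^\var$ blow up like $1/\var$; only the sum has a controlled time derivative, $\pa_tu^\var$ being bounded in $L^2(0,T;H^{-1})+L^1$ by the cancellation above. Aubin--Lions then yields $u^\var\to u$ in $L^2$ and a.e., and $v^\var\to v$ with $\nabla_xv^\var\to\nabla_xv$ strongly in $L^2$ (the $L^{2+\eta}$ bound providing equi-integrability) and a.e. The entropy-dissipation bound forces $h(v^\var)u_A^\var-k(v^\var)u_B^\var\to0$, which with $u_A^\var+u_B^\var\to u$ and $v^\var\to v$ a.e. identifies $u_A=\tfrac{k(v)}{h(v)+k(v)}u$, $u_B=\tfrac{h(v)}{h(v)+k(v)}u$ and the relation $h(v)u_A=k(v)u_B$. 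By \eqref{adem} the effective flux converges, $d_Au^\var+d_Bu_B^\var\to(d_u+\phi(v))u$, and the reaction powers converge by Vitali (the terms $(u^\var)^{1+a},(u^\var)^d$ are equi-integrable since $a\le1$, $d\le2$ and $u^\var$ is bounded in $L^2$). The bounds $\nabla_xu=2\sqrt u\,\nabla_x\sqrt u\in L^1$ and $\nabla_x(u\phi(v))=\phi(v)\nabla_xu+u\phi'(v)\nabla_xv\in L^1$ follow from the above, so one may integrate by parts and verify the two identities of Definition \ref{defimer}.

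The main obstacle is this last step, together with the estimate $\nabla_xv^\var\in L^{2+\eta}$: one must secure enough uniform integrability to pass to the limit in the nonlinear flux $d_Bu_B^\var$ and in the reaction powers while the stiff $1/\var$ reaction kills any uniform bound on $\pa_tu_A^\var,\pa_tu_B^\var$. The resolution is to use only the entropy (not an $L^2$ energy, whose fast-reaction term $-(u_A^\var-u_B^\var)(hu_A^\var-ku_B^\var)$ is not sign-definite when $h\neq k$) for the gradients and the equilibration, duality for the $L^2$ bound on the sum, and the cancellation of the stiff terms in $u^\var$ for compactness; the delicate exponent arithmetic of the $u^\var$--$v^\var$ bootstrap is precisely where $a\le1$ and $d\le2$ enter.
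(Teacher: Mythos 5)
Your overall architecture---cancellation of the stiff terms in the sum $u^\var=u_A^\var+u_B^\var$, a duality estimate for the sum, an entropy yielding gradient bounds and fast-reaction dissipation, Aubin--Lions applied to the sum only, and identification of $u_A,u_B$ through the vanishing of $h(v^\var)u_A^\var-k(v^\var)u_B^\var$---is exactly the paper's. But both quantitative engines are weakened in a way that makes the argument fail precisely at the exponents this proposition allows. First, you extract only $\|u^\var\|_{L^2}\le C_T$ from duality, and later claim that $(u^\var)^{1+a}$ and $(u^\var)^d$ are equi-integrable ``since $a\le1$, $d\le2$ and $u^\var$ is bounded in $L^2$''. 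At the endpoints $a=1$ or $d=2$ these powers are then bounded in $L^1$ only, which is \emph{not} equi-integrability: Vitali does not apply and the passage to the limit in the reaction terms collapses; likewise, with only the $L^2$ bound, the interpolation \eqref{prelim_vpr} gives $\nabla_x v^\var\in L^{4/d}$, which at $d=2$ is just $L^2$, not the claimed $L^{2+\eta}$. The whole point of the paper's Lemma \ref{le:duality}, following \cite{CDF}, is that duality gives an $L^{p^\ast}$ bound for some $p^\ast>2$ \emph{strictly} (estimate \eqref{star}), obtained by perturbing the exponent in the maximal-regularity constant, i.e. checking condition \eqref{condition_constant} for $q=r'$ slightly below $2$. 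Your proposed ``bootstrap'' cannot substitute for this: the duality estimate for the sum equation uses only $d_A\le M^\var\le d_A+d_B$ and the bound of the reaction term by the constant $K$, so improved regularity of $v^\var$ feeds nothing back into it; there is no loop to close, and the gain above $L^2$ must come from the duality lemma itself.

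Second, your entropy is the Boltzmann one (testing with $\ln(hu_A^\var)$, $\ln(ku_B^\var)$). Its cross terms are \emph{linear} in $u_A^\var,u_B^\var$, e.g. $\int u_A^\var\,\frac{h'(v^\var)}{h(v^\var)}\,\pa_t v^\var$ and its analogue with $|\nabla_x v^\var|^2$, while maximal regularity only gives $\pa_t v^\var,\,|\nabla_x v^\var|^2\in L^{q}$ with $q\le p^\ast/d$, barely above $1$ when $d$ is close to $2$; H\"older then demands $u^\var\in L^{(p^\ast/d)'}$, i.e. $p^\ast\ge 1+d$, which is unavailable ($p^\ast$ is only slightly above $2$), and with your $L^2$ bound these terms are controllable only for $d\le1$. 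So they cannot be ``absorbed by Gronwall and the $L^2$ bound'' when $1<d\le2$. The paper's fix is structural, not technical: it uses the weighted power entropy $\int h(v^\var)^{p-1}(u_A^\var)^p/p+k(v^\var)^{p-1}(u_B^\var)^p/p$ with a \emph{small, tunable} exponent $p\in\,]0,\min(1,p^\ast-d)[$, so that the cross terms involve $(u^\var)^p\in L^{1+d/p}$, which pairs with conjugate exponent against $\pa_t v^\var,|\nabla_x v^\var|^2\in L^{1+p/d}$; see \eqref{dt_v_in_entropy_estimate_pr}--\eqref{nabla_v_in_entropy_estimate_pr}, leading to \eqref{estinj}, \eqref{estinj2}, and hence to \eqref{es:Vp} for such small $p$ (not $p=1$, as you assert via $\nabla_x\sqrt{u^\var}$). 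Two minor points: the construction of $h,k$ from $\phi$ belongs to the proof of Theorem \ref{theo_ex2}, since here Assumption B provides them; and $\pa_t u^\var$ is bounded in $L^{\lambda}([0,T],W^{-2,\lambda})$ with $\lambda=p^\ast/(1+a)$, not in $L^2(0,T;H^{-1})+L^1$, because the flux gradient is only in $L^{1+\zeta}$---Aubin's lemma still applies, so this slip is harmless.
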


In the following remarks, we discuss some direct extensions of the results stated above.
\begin{remark}
Theorems \ref{theo_ex1} and \ref{theo_ex2} use classical parabolic ($W^{2,1}_s$ with the notations of \cite{lsu})
 estimates. For the sake of simplicity, we chose to use a non-optimal version of those estimates, formulated below in Proposition
 \ref{theo:parabolic_reg}. Note that the assumptions could be somewhat improved (see \cite{lsu}) 
in Theorems \ref{theo_ex1} and \ref{theo_ex2}: 
first, the estimates do not require a full compatibility condition on the boundary $\pa\Omega$ in the critical case $s=3$; secondly, some of the initial data assumed to belong to  $W^{2,s}(\Omega)$ in our theorems and propositions 
can be assumed to belong only to the fractional Sobolev space $W^{2-2/s,s}(\Omega)$.
\end{remark}

\begin{remark}
In the case of Theorem \ref{theo_ex2}, the compactness of the nonlinear reaction terms $u^{1+a}$ and $u^d$ is obtained thanks
to an $L^p$ estimate for some $p>2$ given by a duality lemma. Notice first that this enables to 
treat coefficients $a=1 + \eta$ and $d= 1+\eta$ 
 when $\eta>0$ is smaller than some (small) constant.
  Secondly, the duality lemma (stated in Lemma \ref{le:duality})
 for initial data in $L^2(\Omega)$ holds in fact for initial data in $L^{2-\eta}(\Omega)$ 
when $\eta>0$ is also smaller than some (small) constant. This allows to replace in
 Theorem \ref{theo_ex2} the assumption $u_{in}\in L^2(\Omega)$ by
 the weaker assumption $u_{in}\in L^{2-\eta}(\Omega)$.
\end{remark}

\begin{remark}
Since (as we shall see later on), $v$ satisfies a maximum principle in Theorems \ref{theo_ex1} and \ref{theo_ex2}, those theorems can easily be extended
in the case when the functions $v \mapsto r_b\,v^b$ and $v\mapsto r_c\,v^c$
are replaced by any smooth functions of $v$ (with an arbitrary growth when $v \to \infty$).
The functions  $u \mapsto r_a\,u^a$ and $u\mapsto r_d\,u^d$ can also be replaced by smooth functions in 
Theorems \ref{theo_ex1} and \ref{theo_ex2}, provided that those functions behave in the same way as $u \mapsto r_a\,u^a$ and $u\mapsto r_d\,u^d$ when
$u \to \infty$.  
\end{remark}

\begin{remark}
In the last setting of Theorem \ref{theo_ex1}, a minimum principle for $v$ allows to replace the assumption stating that $\phi''$ is locally H\"older continuous on $[0,+\infty[$
 by the assumption stating that $\phi''$ is locally H\"older continuous on $]0,+\infty[$, provided that the initial datum for $v$ is bounded below by a strictly positive constant.
\end{remark}
\medskip

The model \eqref{skt} was proposed by Shigesada, Kawasaki and Teramoto in \cite{STK}. For modeling issues, see also \cite{ok}. As far as mathematical analysis is concerned, two directions have been widely investigated in the literature: a series of papers focuses on steady-states and stability (patterns are shown to appear; see \cite{imn} and the references therein); other works concern existence, smoothness and uniqueness of solutions.

The local (in time) existence was established by Amann: in his series of papers \cite{Am1}-\cite{Am3}, he proved a general result of existence of local (in time) solutions for parabolic systems, including \eqref{skt} and \eqref{sku1}-\eqref{sku2}.

The global (in time) existence has then been proved under various assumptions. One of the difficulties which arises  is related to the use of Sobolev inequalities in parabolic estimates, which only provides results in low dimension. Indeed, for the well studied triangular quadratic case (that is, \eqref{skt} with $d_{21}=0$), most papers allowing strong cross diffusion
(that is, when no restriction is imposed on $d_{12}$) only deal 
with low dimensions: for results in dimension 1, see \cite{mami}, \cite{mim} and \cite{shi}. In \cite{yag}, Yagi showed the global existence in dimension 2 in the presence of self diffusion, and Lou, Ni and Wu obtained it in \cite{lnw} without condition on self diffusion, together with a stability result.
Choi, Lui and Yamada first got rid of the restriction on the dimension in \cite{cly1} (without self diffusion in the second equation), provided that the cross diffusion coefficient $d_{12}$ is sufficiently small. In a following paper \cite{cly2}, they removed the smallness assumption on the cross diffusion in the presence of self diffusion in the first equation. However, in the presence of self diffusion in the second equation, they require that the dimension is lower than 6. Finally, Phan improved this result up to dimension lower than 10 in \cite{pha1}, and in any dimension under the assumption that the self diffusion dominates the cross diffusion in \cite{pha2}. For the quadratic system \eqref{sktt} without self diffusion, our Theorem \ref{theo_ex2} gives the existence of global solutions in any dimension, without restriction on the strength of the cross diffusion.

When it comes to systems with general reaction terms of the form \eqref{sku1}-\eqref{sku2}, Posio and Tesei first showed the existence (in any dimension) of global solutions under some strong assumption on the reaction coefficients in \cite{pt}. This assumption was relaxed in \cite{yam} by Yamada, who obtained the existence of global strong solutions under the assumption $a>d$, which is exactly our assumption in Theorem \ref{theo_ex1}. The main differences between our work (in the case $a>d$)
and \cite{yam} are the following: first, our Theorem \ref{theo_ex1} allows singular initial data leading to weak solutions (and provides results very close to those of \cite{yam} when initial data are smooth).
 Then our method, based on simple energy estimates, presents a unifying proof for a wide range of parameters including both the quadratic case and the case $a>d$. Finally, the approximating system that we use leads to self-contained
proofs without reference to abstract existence theorems. 
Note also that (for general reaction terms) Wang got similar results in \cite{wan} in the presence of self diffusion in the first equation, under a condition (depending on the dimension) of smallness of the parameter $d$ w.r.t. the parameter $a$.

Systems of reaction diffusion equations such as \eqref{skktinf} were introduced by Iida, Mimura and Ninomiya in \cite{imn} to approximate cross diffusion systems, in particular from the point of view of stability. The convergence of the stationary problem was explored by Izuhara and Mimura in \cite{izmi}, both numerically and theoretically. In \cite{code}, Conforto and Desvillettes showed the convergence of the solutions of \eqref{skktinf} towards a solution of the system \eqref{sktt} in dimension one. Our paper generalizes their result to a wider set of admissible reaction terms and in any dimension. Note finally that Murakawa obtained similar results for a class of non triangular systems in \cite{mu}.
\bigskip

{\bf Note}: After submission of this article, Hoang, Nguyen and Phan released the paper \cite{hpn}. Therein, they obtain global smooth solutions in any dimension of space for the quadratic case (system \eqref{skt} with $d_{21}=0$) in the presence of self diffusion in the first equation. 
Their result relies on new nonlinear parabolic estimates (that they establish) and uses the regularizing effect of the presence of the self diffusion. 
\par
 The \emph{a priori} estimates obtained thanks to our methods (duality lemma and entropy functional in $L^p$ spaces) 
still hold 
in the case when self-diffusion is present. However, it is not obvious whether or not the singular perturbation method that we use can be extended to this case.

\bigskip

The rest of our paper is structured as follows: Propositions \ref{theo_sing1} and \ref{theo_sing2} are proven in Section \ref{sec2}. Then, 
Section \ref{sec3} is devoted to the proof of Theorems \ref{theo_ex1} and \ref{theo_ex2}.

\section{Proof of the convergence of the singularly perturbed equations}\label{sec2}

We begin with the 
\medskip

\begin{proof}[Proof of  Proposition \ref{theo_sing1}]
We fix $T>0$, and shall write from now on (for any $q\in [1, \infty]$) $L^q := L^q([0,T]\times\Omega)$.
 In the proof of this proposition and of the following proposition, the constant $C_T>0$
 only depends on the parameters $d_A, d_B, d_u, d_v$, $r_u, r_v, r_a, r_b, r_c, r_d$, $a, b, c, d$,
 the domain $\Omega$, the initial data $u_{in}, v_{in}$, the functions $\phi$, $h$ and $k$, and the time $T$. It may also depends on the parameters $p$ and $q$ used later. In this proposition, it also depends on the parameter $p_0$ in the initial datum. 
 In particular, all the estimates are uniform w.r.t $\epsilon \in ]0,1[$, unless stated otherwise.

\medskip

We first observe that for a given $\var\in ]0,1[$, standard theorems for reaction-diffusion equations show the existence of a (global, nonnegative for each component) strong solution $(u_A^\var,u_B^\var, v^\var)$ in the sense of Definition \ref{defimerd} to system
(\ref{eq:3rd}), (\ref{Neumann_3RD}), (\ref{initial_3RD}). Moreover, these solutions satisfy
\begin{equation}\label{es:eps_depend}\begin{split}
\|\pa_t u_A^\var\|_{q}, \, \|\pa_t u_B^\var\|_{q}, \, \|\pa_{x_ix_j} u_A^\var\|_{q}, \, \|\pa_{x_ix_j} u_B^\var\|_{q} \le \mu_{T,\var} \qquad \text{for }i,\,j=1..N, \text{ for all } q>1,\\
\|\pa_t v^\var\|_{1+p_0/d}, \, \|\pa_{x_ix_j} v^\var\|_{1+p_0/d} \le \mu_{T,\var}\quad \text{for }i,\,j=1..N ,\\
\nu^1_{T,\var} \ge u_A^\var(t,x) \qquad \nu^1_{T,\var} \ge u_B^\var(t,x) \qquad \nu^1_{T,\var} \ge v^\var(t,x) \ge \nu^0_{T,\var} >0 \quad \text{a.e. }(t,x) \in [0,T]\times\Omega,
\end{split}\end{equation}
where the constants $\mu_{T,\var}>0$, $\nu^1_{T,\var}>0$, $\nu^0_{T,\var}>0$ depend 
on $\var$ and the other parameters, including $T$, and the last inequality is a direct consequence of the 
minimum principle.
 We refer to \cite{desv_parma} for complete proofs.
\medskip

We now establish three lemmas stating the (uniform w.r.t. $\var \in ]0,1[$) {\sl{a priori}} estimates for 
this solution $(u_A^\var,u_B^\var, v^\var)$.

\begin{lem}\label{prelim_u} Under the assumptions of Proposition \ref{theo_sing1}, the following (uniform w.r.t $\var \in ]0,1[$) estimates hold:
\begin{equation}
\sup_{0\le t \le T}\int_\Omega (u_{A}^\epsilon+u_{B}^\epsilon)(t)\le C_T; \hspace{1cm}  \|u_A^\epsilon+u_B^\epsilon\|_{L^{1+a}}\le C_T.
\end{equation}
\end{lem}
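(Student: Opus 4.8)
The plan is to exploit the fact that summing the equations for $u_A^\epsilon$ and $u_B^\epsilon$ cancels the singular reaction terms $\pm\tfrac1\epsilon[k(v^\epsilon)u_B^\epsilon-h(v^\epsilon)u_A^\epsilon]$. Writing $u^\epsilon:=u_A^\epsilon+u_B^\epsilon$ and adding the first two lines of (\ref{eq:3rd}) gives
$$\partial_t u^\epsilon - \Delta_x\big(d_A\,u_A^\epsilon + (d_A+d_B)\,u_B^\epsilon\big) = \big[r_u - r_a\,(u^\epsilon)^a - r_b\,(v^\epsilon)^b\big]\,u^\epsilon,$$
an equation in which the $1/\epsilon$ contribution has disappeared. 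This is the key structural point, and it is what makes the estimates uniform in $\epsilon$. For a fixed $\epsilon$ the solution is strong with the bounds (\ref{es:eps_depend}), so the manipulations below (differentiation under the integral sign, integration by parts) are legitimate.

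Integrating in space and using the homogeneous Neumann conditions (\ref{Neumann_3RD}) to kill the divergence term, I would obtain, with $M(t):=\int_\Omega u^\epsilon(t)$,
$$M'(t) = r_u\,M(t) - r_a\int_\Omega (u^\epsilon)^{1+a} - r_b\int_\Omega (v^\epsilon)^b\,u^\epsilon \le r_u\,M(t) - r_a\int_\Omega (u^\epsilon)^{1+a},$$
the last inequality using $u^\epsilon,v^\epsilon\ge 0$. By Jensen (equivalently H\"older, since $1+a>1$), $\int_\Omega (u^\epsilon)^{1+a}\ge |\Omega|^{-a}\,M^{1+a}$, so $M$ obeys the logistic differential inequality $M'\le r_u\,M - r_a\,|\Omega|^{-a}\,M^{1+a}$, whose right-hand side is negative once $M\ge (r_u\,|\Omega|^a/r_a)^{1/a}$. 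Hence $M(t)\le\max\big(M(0),\,(r_u\,|\Omega|^a/r_a)^{1/a}\big)$ for all $t$. Since the regularized initial data satisfy $M(0)=\int_\Omega u^\epsilon(0)\le \|u_{in}\|_{L^1(\Omega)}+2\,|\Omega|$ uniformly in $\epsilon\in\,]0,1[$ (mollification and multiplication by $\chi^\epsilon\in[0,1]$ do not increase the $L^1$ norm, and the additive $\epsilon$ contributes at most $2\,|\Omega|$ by (\ref{initial_3RD})), this yields the first estimate $\sup_{0\le t\le T}\int_\Omega u^\epsilon(t)\le C_T$, uniformly in $\epsilon$.

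For the second estimate I would keep the full term $\int_\Omega(u^\epsilon)^{1+a}$ rather than bounding it below. Rearranging the inequality above gives $r_a\int_\Omega (u^\epsilon)^{1+a}\le r_u\,M(t)-M'(t)$, and integrating over $[0,T]$,
$$r_a\int_0^T\!\!\int_\Omega (u^\epsilon)^{1+a} \le r_u\int_0^T M(t)\,\ud t - \big(M(T)-M(0)\big) \le r_u\,T\,\sup_{[0,T]}M + M(0).$$
By the previous step the right-hand side is bounded by $C_T$ uniformly in $\epsilon$, which is precisely $\|u^\epsilon\|_{L^{1+a}}\le C_T$.

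The computation is essentially routine once the cancellation is noticed; the two points requiring genuine care are (i) checking that the $\epsilon$-dependent regularity recorded in (\ref{es:eps_depend}) justifies the space integration and the use of the Neumann boundary condition, and (ii) verifying that the regularized initial mass $M(0)$ is bounded independently of $\epsilon$. The latter is the main place where uniformity could be lost if the regularization in (\ref{initial_3RD}) were chosen carelessly, so it is the step I would scrutinize most closely, even though no real difficulty arises here.
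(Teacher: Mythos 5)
Your proof is correct and follows essentially the same route as the paper: sum the two equations so that the singular $\pm\frac{1}{\epsilon}[k(v^\epsilon)u_B^\epsilon - h(v^\epsilon)u_A^\epsilon]$ terms cancel, integrate in space (using the Neumann conditions), and exploit the logistic structure of the reaction term, first to bound the mass and then, keeping the $r_a\int\!\!\int (u^\epsilon)^{1+a}$ term, to get the $L^{1+a}$ bound. The only (harmless) variation is in the first estimate: the paper bounds the reaction term pointwise by $\sup_{w\ge 0}\,(r_u - r_a w^a)\,w \le C$ and integrates in time, whereas you use Jensen's inequality to derive a logistic differential inequality for $M(t)=\int_\Omega u^\epsilon(t)$ — both are valid, and yours even yields a $T$-independent bound on the mass.
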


\begin{proof}[Proof of Lemma \ref{prelim_u}]
The quantity $u_A^\epsilon+u_B^\epsilon$ satisfies the equation
\begin{equation} \label{eq:3rd_u} 
\partial_{t} (u_{A}^\epsilon+u_{B}^\epsilon) - \Delta_x [M^\epsilon (u_{A}^\epsilon+u_{B}^\epsilon)] = [r_u-r_{a}(u_{A}^\epsilon+u_{B}^\epsilon)^a-r_{b}(v^\epsilon)^b](u_{A}^\epsilon+u_{B}^\epsilon)\le C_T,
\end{equation}
where $M^\epsilon = \frac{d_A u_A^\epsilon + (d_A + d_B) u_B^\epsilon}{u_{A}^\epsilon+u_{B}^\epsilon}$. 
We integrate w.r.t. space and time to get
\begin{equation}\label{L1_energy_estimate}
\sup_{0\le t \le T}\int_\Omega (u_{A}^\epsilon+u_{B}^\epsilon)(t) \le \int_\Omega (u_{A,in}^\epsilon+u_{B,in}^\epsilon) + C_T \le C_T,
\end{equation}
so that
\begin{equation}
\sup_{0\le t \le T}\int_\Omega (u_{A}^\epsilon+u_{B}^\epsilon)(t) + r_a\int_0^T\int_\Omega (u_{A}^\epsilon+u_{B}^\epsilon)^{1+a}
\le \int_\Omega (u_{A,in}^\epsilon+u_{B,in}^\epsilon) + r_u \int_0^T\int_\Omega (u_{A}^\epsilon+u_{B}^\epsilon) \le C_T.
\end{equation}
\end{proof}

\begin{lem}\label{prelim_v}  Under the assumptions of Proposition \ref{theo_sing1}, for all $1<q\le 1+ p_0/d$, the following (uniform w.r.t $\var \in ]0,1[$) estimates hold:
\begin{equation}
\|v^\epsilon\|_{L^\infty}\le C_T; \hspace{1cm} \|\nabla_x v^\epsilon\|_{L^{2q}}^2\le C_T \,(1+ \|(u_A^\epsilon+u_B^\epsilon)^d\|_{L^{q}}); \hspace{1cm}\|\partial_t v^\epsilon\|_{L^q}\le C_T\, (1+ \|(u_A^\epsilon+u_B^\epsilon)^d\|_{L^{q}}).
\end{equation}
\end{lem}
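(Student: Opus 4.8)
All three estimates are read off the $v$-equation
\[
\pa_t v^\epsilon - d_v\,\Delta_x v^\epsilon = \big[r_v - r_c\,(v^\epsilon)^c - r_d\,(u_A^\epsilon+u_B^\epsilon)^d\big]\,v^\epsilon =: f^\epsilon ,
\]
together with the Neumann condition \eqref{Neumann_3RD} and the datum $v^\epsilon(0,\cdot)=v_{in}^\epsilon=v_{in}+\epsilon$. The plan is to treat them in the stated order. First I would obtain the $L^\infty$ bound by a maximum principle. Since $u_A^\epsilon,u_B^\epsilon\ge 0$ and $v^\epsilon\ge 0$, the competition term $-r_d(u_A^\epsilon+u_B^\epsilon)^d\,v^\epsilon$ is nonpositive, so $v^\epsilon$ is a subsolution of $\pa_t w-d_v\,\Delta_x w = r_v\,w - r_c\,w^{c+1}$. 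Comparing with the spatially homogeneous supersolution furnished by the logistic ODE $\dot w = r_v\,w - r_c\,w^{c+1}$, $w(0)=\|v_{in}^\epsilon\|_\infty$, yields
\[
0\le v^\epsilon(t,x)\le \max\!\big(\|v_{in}^\epsilon\|_{L^\infty(\Omega)},\,(r_v/r_c)^{1/c}\big)\le C_T ,
\]
uniformly in $\epsilon\in\,]0,1[$ since $\|v_{in}^\epsilon\|_\infty\le\|v_{in}\|_\infty+1$ (recall $v_{in}\in L^\infty$). A sharper look at the reaction nonlinearity gives the explicit constant of \eqref{vex}.

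Next, for $1<q\le 1+p_0/d$ I would apply classical maximal $L^q$ parabolic regularity (the $W^{2,1}_q$ estimates of \cite{lsu}) to the $v$-equation with Neumann data, which gives
\[
\|\pa_t v^\epsilon\|_{L^q}+\sum_{i,j}\|\pa_{x_ix_j}v^\epsilon\|_{L^q}\le C_T\,\big(\|f^\epsilon\|_{L^q}+\|v_{in}^\epsilon\|_{W^{2,q}(\Omega)}\big).
\]
The $L^\infty$ bound already proved controls the bounded factors of $f^\epsilon$, so that $|f^\epsilon|\le C_T\,(1+(u_A^\epsilon+u_B^\epsilon)^d)$ pointwise, whence $\|f^\epsilon\|_{L^q}\le C_T\,(1+\|(u_A^\epsilon+u_B^\epsilon)^d\|_{L^q})$. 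Because $v_{in}^\epsilon=v_{in}+\epsilon$ with $v_{in}\in W^{2,1+p_0/d}(\Omega)$ and $q\le 1+p_0/d$, the data term is bounded uniformly in $\epsilon$; in the borderline range $1+p_0/d\ge 3$ this uses the compatibility condition \eqref{compat_v}, which is precisely the hypothesis made in that case. This delivers the third estimate and, simultaneously, a uniform $L^q$ bound on the second space derivatives of $v^\epsilon$.

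Finally, I would upgrade the $L^q$ bound on the second derivatives to the $L^{2q}$ bound on $\nabla_x v^\epsilon$ by a Gagliardo--Nirenberg argument. Integrating $\int_\Omega|\nabla_x v^\epsilon(t)|^{2q}$ by parts and discarding the boundary term (which vanishes thanks to $\nabla_x v^\epsilon\cdot n=0$), then using H\"older's inequality, yields the pointwise-in-time estimate $\|\nabla_x v^\epsilon(t)\|_{L^{2q}(\Omega)}^2\le C\,\|v^\epsilon(t)\|_{L^\infty(\Omega)}\,\|\pa_{x_ix_j}v^\epsilon(t)\|_{L^q(\Omega)}$. Raising to the power $q$ and integrating in time gives $\|\nabla_x v^\epsilon\|_{L^{2q}}^2\le C\,\|v^\epsilon\|_{L^\infty}\,\|\pa_{x_ix_j}v^\epsilon\|_{L^q}$, and combining with the two previous steps produces the second estimate $\|\nabla_x v^\epsilon\|_{L^{2q}}^2\le C_T\,(1+\|(u_A^\epsilon+u_B^\epsilon)^d\|_{L^q})$.

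The delicate point throughout is the uniformity in $\epsilon$: the strong solution of the $\epsilon$-problem is a priori only controlled by the $\epsilon$-dependent constants of \eqref{es:eps_depend}, so every bound above must be driven solely by the structure of $f^\epsilon$ and by $\epsilon$-independent estimates on $v_{in}^\epsilon$. The main obstacle is thus securing the initial-data contribution in the maximal-regularity estimate uniformly in $\epsilon$, which is exactly where the $W^{2,1+p_0/d}$ hypothesis on $v_{in}$ and, in the critical range, the compatibility condition \eqref{compat_v} are needed; the Gagliardo--Nirenberg step, by contrast, is entirely driven by the already-established $\epsilon$-uniform $L^\infty$ and second-derivative bounds.
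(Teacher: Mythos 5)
Your proposal is correct and follows essentially the same route as the paper: an $L^\infty$ bound via the maximum principle, maximal $L^q$ parabolic regularity applied to the $v^\epsilon$-equation with the reaction term treated as a given source (using the $W^{2,1+p_0/d}$ assumption on $v_{in}$ and, in the critical range, the compatibility condition), and then interpolation of $\nabla_x v^\epsilon$ in $L^{2q}$ between the $L^\infty$ bound and the $L^q$ bound on second derivatives. Your Gagliardo--Nirenberg step spells out exactly the interpolation the paper invokes, and the minor difference in the explicit constant of the maximum principle is immaterial since the lemma only requires $\|v^\epsilon\|_{L^\infty}\le C_T$.
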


\begin{proof}[Proof of Lemma \ref{prelim_v}]
The first estimate is a consequence of the maximum principle for the equation satisfied (in 
the strong sense) by $v^\var$. More precisely, this maximum principle writes
\begin{equation}\label{vexa}
0 \le v^{\var}(t,x) \le \max \left(||v_{in}||_{L^{\infty}(\Omega)} + \var, \left[\frac{r_v}{r_c\,(c+1)}\right]^{1/c}\right)
\qquad {\hbox{ for a.e. }} \quad (t,x) \in \R_+\times\Omega.
\end{equation}

 We can then apply the maximal regularity result for the heat equation (satisfied by
 $v^\var$ when the reaction term is considered as given)
 in order to get the third estimate (note that we use here the assumption on $v_{in}$, since
$v_{in}^\var = v_{in} + \var$). The same bound also holds for $\pa_{x_i x_j}v^\epsilon$, so that interpolating with the first estimate, 
the second estimate holds.
\end{proof}

We now write down a (uniform w.r.t. $\var \in ]0,1[$) bound obtained thanks to the use 
of a Lyapounov-like (entropy) functional:
\medskip

\begin{lem}\label{entropy_estimates}
 Under the assumptions of Proposition \ref{theo_sing1},
for all $p \in ]1, p_0]$, the following inequalities hold:
\begin{equation}\label{es:supLp}\begin{split}
\sup_{t\in[0,T]}\int_\Omega (u_A^\epsilon+u_B^\epsilon)^{p}(t)
\le C_T\,(1+\|u_A^\epsilon+u_B^\epsilon\|_{L^{p+d}}^{p+d}),
\end{split}\end{equation}
\begin{equation}\label{eq:entropy_estimates}\begin{split}
\|u_A^\epsilon+u_B^\epsilon\|_{L^{p+a}}^{p+a}
\le C_T\,(1+\|u_A^\epsilon+u_B^\epsilon\|_{L^{p+d}}^{p+d}),
\end{split}\end{equation}
\begin{equation}\label{eq:entropy_inverse_estimates}\begin{split}
\|\nabla_x (u_A^\epsilon)^{p/2}\|_{L^2}^2
+ \|\nabla_x (u_B^\epsilon)^{p/2}\|_{L^2}^2
+\frac{1}{\epsilon}\| (h(v^\epsilon)u_A^\epsilon)^{p/2} - (k(v^\epsilon)u_B^\epsilon)^{p/2} \|_{L^2}^2
\le C_{T}\,(1+\|u_A^\epsilon+u_B^\epsilon\|_{L^{p+d}}^{p+d}).
\end{split}\end{equation}
\end{lem}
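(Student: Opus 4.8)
The plan is to derive all three inequalities from a single $L^p$ entropy identity, obtained by testing the first two equations of \eqref{eq:3rd} against weights adapted to the fast exchange. Crucially, rather than the naive choice $(u_A^\var)^{p-1},(u_B^\var)^{p-1}$, I would multiply the $u_A^\var$-equation by $p\,(h(v^\var)\,u_A^\var)^{p-1}$ and the $u_B^\var$-equation by $p\,(k(v^\var)\,u_B^\var)^{p-1}$, integrate over $\Omega$ and add. Writing $w_A := h(v^\var)\,u_A^\var$ and $w_B := k(v^\var)\,u_B^\var$, the singular exchange contribution becomes $\frac{p}{\var}\int_\Omega (w_B-w_A)\,(w_A^{p-1}-w_B^{p-1})$, whose integrand is pointwise $\le 0$ since $s\mapsto s^{p-1}$ is increasing; by the elementary inequality $(\alpha-\beta)(\alpha^{p-1}-\beta^{p-1})\ge \frac{4(p-1)}{p^2}(\alpha^{p/2}-\beta^{p/2})^2$ it is bounded above by $-\frac{c}{\var}\,\|w_A^{p/2}-w_B^{p/2}\|_{L^2(\Omega)}^2$, which is exactly the dissipation $\frac1\var\|(h(v^\var)u_A^\var)^{p/2}-(k(v^\var)u_B^\var)^{p/2}\|_{L^2}^2$ of \eqref{eq:entropy_inverse_estimates}. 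This $h,k$-weighting is what defuses the $O(1/\var)$ term: the unweighted choice leaves a remainder still carrying the singular factor $1/\var$, which cannot be absorbed.

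Two further favourable terms appear. The diffusion of \eqref{eq:3rd}, after integration by parts using the Neumann conditions \eqref{Neumann_3RD}, yields $\int_\Omega (h u_A^\var)^{p-2}h\,|\nabla_x u_A^\var|^2$ and its $u_B^\var$-analogue, which, since $h_0\le h,k\le H$ on the range of $v^\var$ (bounded by Lemma~\ref{prelim_v}), dominate $c\,(\|\nabla_x (u_A^\var)^{p/2}\|_{L^2}^2+\|\nabla_x (u_B^\var)^{p/2}\|_{L^2}^2)$. The reaction part produces $-p\,r_a\,(u_A^\var+u_B^\var)^a\,(h^{p-1}(u_A^\var)^p+k^{p-1}(u_B^\var)^p)\le -c\,(u_A^\var+u_B^\var)^{p+a}$, the coercive $L^{p+a}$ quantity of \eqref{eq:entropy_estimates}; the term in $-r_b(v^\var)^b$ has the right sign and is dropped, while $p\,r_u(\cdots)\le C\,(u_A^\var+u_B^\var)^p$ is of lower order.

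The cost of the weighting is that differentiating $h(v^\var)^{p-1}$ and $k(v^\var)^{p-1}$ in time and space generates remainders, and controlling these is the main obstacle. The time derivative produces, besides $\frac{d}{dt}\int_\Omega \frac1p(h^{p-1}(u_A^\var)^p+k^{p-1}(u_B^\var)^p)$ (comparable to $\int(u_A^\var+u_B^\var)^p$), a term $\sim\int_\Omega (u_A^\var)^p\,h^{p-2}h'(v^\var)\,\pa_t v^\var$; the diffusion produces a cross term $\sim\int_\Omega (u_A^\var)^{p-1}h^{p-2}h'(v^\var)\,\nabla_x v^\var\cdot\nabla_x u_A^\var$. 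The plan is to absorb the bad half of the gradient cross term into the gradient dissipation by Young's inequality (rewriting $(u_A^\var)^{p-1}\nabla_x u_A^\var=\frac2p (u_A^\var)^{p/2}\nabla_x (u_A^\var)^{p/2}$), leaving $\int (u_A^\var)^p|\nabla_x v^\var|^2$, and then to bound this and $\int (u_A^\var)^p|\pa_t v^\var|$ by Hölder with the conjugate exponents $q=\frac{p+d}{d}$, $q'=\frac{p+d}{p}$, using Lemma~\ref{prelim_v}. Since $\||\nabla_x v^\var|^2\|_{L^q}=\|\nabla_x v^\var\|_{L^{2q}}^2\le C_T(1+\|(u_A^\var+u_B^\var)^d\|_{L^q})$ and $\|\pa_t v^\var\|_{L^q}\le C_T(1+\|(u_A^\var+u_B^\var)^d\|_{L^q})$, and $\|(u_A^\var)^p\|_{L^{q'}}=\|u_A^\var\|_{L^{pq'}}^p$, the products close at total degree $pq'=dq=p+d$, yielding exactly $C_T(1+\|u_A^\var+u_B^\var\|_{L^{p+d}}^{p+d})$. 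The constraint $q\le 1+p_0/d$ under which Lemma~\ref{prelim_v} applies is precisely $p\le p_0$, so the bookkeeping is consistent.

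Finally I would integrate the resulting differential inequality over $[0,T]$ and read off the three estimates by retaining or discarding nonnegative terms: dropping the dissipations and the $L^{p+a}$ term gives \eqref{es:supLp}; dropping the dissipations but keeping the $L^{p+a}$ term gives \eqref{eq:entropy_estimates}; keeping the two dissipations while discarding the (negative) $L^{p+a}$ term gives \eqref{eq:entropy_inverse_estimates}. In each case $\int_\Omega(u_A^\var+u_B^\var)^p(0)$ is bounded uniformly in $\var$ because $u_{in}\in L^{p_0}(\Omega)$ and $p\le p_0$, while the $L^\infty$ bound on $v^\var$ and Lemma~\ref{prelim_u} keep all constants independent of $\var\in\,]0,1[$. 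I expect the degree-counting closure of the $\nabla_x v^\var$ and $\pa_t v^\var$ remainders against Lemma~\ref{prelim_v}, together with the initial realization that the test weights must carry the factors $h,k$ to render the singular exchange term sign-definite, to be the only genuinely delicate points.
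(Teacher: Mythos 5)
Your proposal is correct and is essentially the paper's own proof: multiplying the two equations by $p\,(h(v^\varepsilon)u_A^\varepsilon)^{p-1}$ and $p\,(k(v^\varepsilon)u_B^\varepsilon)^{p-1}$ amounts precisely to differentiating in time the entropy $\mathscr{E}^\varepsilon(t)=\int_\Omega h(v^\varepsilon)^{p-1}\frac{(u_A^\varepsilon)^p}{p}+k(v^\varepsilon)^{p-1}\frac{(u_B^\varepsilon)^p}{p}$ that the paper uses. All the key steps coincide: the sign-definite singular exchange term controlled by the elementary inequality $(x-y)(x^{p-1}-y^{p-1})\ge C_p\,|x^{p/2}-y^{p/2}|^2$, the Young absorption of the gradient cross term into the diffusive dissipation, and the H\"older closure of the $\partial_t v^\varepsilon$ and $|\nabla_x v^\varepsilon|^2$ remainders with exponents $1+p/d$, $1+d/p$ against Lemma~\ref{prelim_v}.
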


\begin{proof}[Proof of Lemma \ref{entropy_estimates}]
 We define the following entropy for any $p>0$ (with $p\neq 1$):
\begin{equation}\label{tw}
\mathscr{E}^\epsilon(t) = \int_\Omega h(v^\epsilon)^{p-1}\frac{(u_A^\epsilon)^p}{p}(t) + \int_\Omega k(v^\epsilon)^{p-1}\frac{(u_B^\epsilon)^p}{p}(t) \qquad (=: \mathscr{E}^\epsilon_A(t) + \mathscr{E}^\epsilon_B(t)).
\end{equation}
We compute the derivative (note that in the computation below all integrals lie in $L^1([0,T])$ thanks to the properties \eqref{es:eps_depend}; therefore the computation holds for a.e. $t\in[0,T]$):
\begin{equation}\label{two}
\begin{split}
\frac{\ud}{\ud t}\mathscr{E}_A^\epsilon(t) = \int_\Omega \partial_t\{ h(v^\epsilon)^{p-1}\frac{(u_A^\epsilon)^p}{p}\}(t)\\
= \frac{p-1}{p}\int_\Omega \partial_t v^\epsilon h'(v^\epsilon)h(v^\epsilon)^{p-2}(u_A^\epsilon)^p
+ \int_\Omega \partial_t u_A^\epsilon (u_A^\epsilon)^{p-1} h(v^\epsilon)^{p-1}\\
= \frac{p-1}{p}\int_\Omega \partial_t v^\epsilon h'(v^\epsilon)h(v^\epsilon)^{p-2}(u_A^\epsilon)^p
+ \int_\Omega [r_u-r_{a}(u_{A}^\epsilon+u_{B}^\epsilon)^a-r_{b}(v^\epsilon)^b](u_A^\epsilon)^{p} h(v^\epsilon)^{p-1}\\
+ \frac{1}{\epsilon} \int_\Omega [k(v^\epsilon)u_{B}^\epsilon-h(v^\epsilon)u_{A}^\epsilon] (u_A^\epsilon)^{p-1} h(v^\epsilon)^{p-1}
+ d_A \int_\Omega \Delta_x u_A^\epsilon (u_A^\epsilon)^{p-1} h(v^\epsilon)^{p-1},
\end{split} \end{equation}
where the last term is estimated 
 by integrating by part (and using the inequality $2 |ab|\le a^2 + b^2$) in the case when $p>1$:
\begin{equation}\begin{split}
 &  d_A \int_\Omega \Delta_x u_A^\epsilon (u_A^\epsilon)^{p-1} h(v^\epsilon)^{p-1}\\
=& -d_A \,(p-1)\int_\Omega |\nabla_x u_A^\epsilon|^2 (u_A^\epsilon)^{p-2} h(v^\epsilon)^{p-1}
- d_A \,(p-1) \int_\Omega \nabla_x u_A^\epsilon \cdot \nabla_x h(v^\epsilon) (u_A^\epsilon)^{p-1} h(v^\epsilon)^{p-2}\\
\le & - \frac{d_A}{2}\, (p-1) \int_\Omega |\nabla_x u_A^\epsilon|^2 (u_A^\epsilon)^{p-2} h(v^\epsilon)^{p-1}
+ \frac{d_A}{2} \, (p-1)\int_\Omega |\nabla_x h(v^\epsilon)|^2 (u_A^\epsilon)^{p} h(v^\epsilon)^{p-3}\\
=&- 2 \,d_A \,\frac{(p-1)}{p^2}\int_\Omega |\nabla_x (u_A^\epsilon)^{p/2}|^2 h(v^\epsilon)^{p-1}
+ \frac{(p-1)}{2}\, d_A\int_\Omega |\nabla_x v^\epsilon|^2 (u_A^\epsilon)^{p} (h'(v^\epsilon))^2 h(v^\epsilon)^{p-3}.
\end{split} \end{equation}

Similarly, we get for $u_B^\epsilon$ (still for a.e. $t\in[0,T]$), 
\begin{equation}
\begin{split}
\frac{\ud}{\ud t}\mathscr{E}_B^\epsilon(t)
\le \frac{p-1}{p}\int_\Omega \partial_t v^\epsilon k'(v^\epsilon)k(v^\epsilon)^{p-2}(u_B^\epsilon)^p\\
+ \int_\Omega [r_u-r_{a}(u_{B}^\epsilon+u_{B}^\epsilon)^a-r_{b}(v^\epsilon)^b](u_B^\epsilon)^{p} k(v^\epsilon)^{p-1}
- \frac{1}{\epsilon} \int_\Omega [k(v^\epsilon)u_{B}^\epsilon-h(v^\epsilon)u_{A}^\epsilon] (u_B^\epsilon)^{p-1} k(v^\epsilon)^{p-1}\\
- 2 (d_A+d_B) \frac{p-1}{p^2}\int_\Omega |\nabla_x (u_B^\epsilon)^{p/2}|^2 k(v^\epsilon)^{p-1}
+ \frac{p-1}{2} (d_A+d_B)\int_\Omega |\nabla_x v^\epsilon|^2 (u_B^\epsilon)^{p} (k'(v^\epsilon))^2 k(v^\epsilon)^{p-3}.
\end{split} 
\end{equation}

We add the two estimates and integrate w.r.t time to get (still for any $p>1$)
\begin{equation}\label{entropy_estimate}\begin{split}
\int_\Omega \bigg[h(v^\epsilon)^{p-1}\frac{(u_A^\epsilon)^p}{p}(T)
+ k(v^\epsilon)^{p-1}\frac{(u_B^\epsilon)^p}{p}(T) \bigg] \\
+ 2 \,d_A\, \frac{p-1}{p^2}\int_0^T\int_\Omega |\nabla_x (u_A^\epsilon)^{p/2}|^2 h(v^\epsilon)^{p-1}
+ 2 \,(d_A+d_B)\, \frac{p-1}{p^2}\int_0^T\int_\Omega |\nabla_x (u_B^\epsilon)^{p/2}|^2 k(v^\epsilon)^{p-1}\\
+\frac{1}{\epsilon} \int_0^T \int_\Omega [k(v^\epsilon)u_{B}^\epsilon-h(v^\epsilon)u_{A}^\epsilon] [(u_B^\epsilon)^{p-1} k(v^\epsilon)^{p-1}-(u_A^\epsilon)^{p-1} h(v^\epsilon)^{p-1}]\\
+\, r_{a} \int_0^T\int_\Omega (u_{A}^\epsilon+u_{B}^\epsilon)^a [(u_A^\epsilon)^{p} h(v^\epsilon)^{p-1}+(u_B^\epsilon)^{p} k(v^\epsilon)^{p-1}]\\
\le \int_\Omega \bigg[ h(v_{in}^\epsilon)^{p-1}\frac{(u_{A,in}^\epsilon)^p}{p}
+ k(v_{in}^\epsilon)^{p-1}\frac{(u_{A,in}^\epsilon)^p}{p} \bigg]\\
+\frac{p-1}{p}\int_0^T\int_\Omega \partial_t v^\epsilon [h'(v^\epsilon)h(v^\epsilon)^{p-2}(u_A^\epsilon)^p+k'(v^\epsilon)k(v^\epsilon)^{p-2}(u_B^\epsilon)^p]\\
+ r_u \,\int_0^T\int_\Omega \bigg[(u_A^\epsilon)^{p} h(v^\epsilon)^{p-1}+(u_B^\epsilon)^{p} k(v^\epsilon)^{p-1} \bigg]\\
 + \frac{p-1}{2}\int_0^T \int_\Omega [d_A(u_A^\epsilon)^{p} (h'(v^\epsilon))^2 h(v^\epsilon)^{p-3}+(d_A+d_B)(u_B^\epsilon)^{p} (k'(v^\epsilon))^2 k(v^\epsilon)^{p-3}]|\nabla_x v^\epsilon|^2 .
\end{split} 
\end{equation}

Let us estimate the right-hand side of inequality \eqref{entropy_estimate} under the assumptions of the lemma: the first term is finite since $p\le p_0$.
Thanks to the maximum principle for the density $v^\epsilon$ (obtained in Lemma \ref{prelim_v})
 and the regularity of the functions $h$ and $k$ in Assumption B, the terms $h(v^\epsilon)$, $h'(v^\epsilon)$ and $k(v^\epsilon)$, $k'(v^\epsilon)$ are uniformly bounded in $L^\infty$. We then can estimate the third term with H\"older's inequality.
Indeed,
$$  \bigg| \,\int_0^T\int_\Omega \bigg[(u_A^\epsilon)^{p} h(v^\epsilon)^{p-1}+(u_B^\epsilon)^{p} k(v^\epsilon)^{p-1} \bigg] \,\bigg| $$
$$ \le || h(v^\epsilon)^{p-1} + k(v^\epsilon)^{p-1} ||_{L^{\infty}}\, || u_A^\epsilon +
u_B^\epsilon ||_{L^p}^p \le C_T\, (1+  \|u_A^\epsilon+u_B^\epsilon\|_{L^{p+d}}^{p+d}) . $$
 The second and the last terms are estimated thanks to H\"older's inequality and bounds given by Lemma \ref{prelim_v}. 
More precisely, for the second term, we get
\begin{equation}\label{dt_v_in_entropy_estimate}
\begin{split}
\left| \frac{p-1}{p}\int_0^T\int_\Omega \partial_t v^\epsilon [h'(v^\epsilon)h(v^\epsilon)^{p-2}(u_A^\epsilon)^p+k'(v^\epsilon)k(v^\epsilon)^{p-2}(u_B^\epsilon)^p]\right|\\
\le  \| h'(v^\epsilon)h(v^\epsilon)^{p-2}+k'(v^\epsilon)k(v^\epsilon)^{p-2}\|_{L^{\infty}}
\| \partial_t v^\epsilon \|_{L^{1+p/d}}
\| (u_A^\epsilon+u_B^\epsilon)^p \|_{L^{1+d/p}}
\le C_T\,(1+\|u_A^\epsilon+u_B^\epsilon\|_{L^{p+d}}^{p+d}),
\end{split} 
\end{equation}
and for the last term, we get
\begin{equation}\label{nabla_v_in_entropy_estimate}
\begin{split}
\left| \frac{p-1}{2}\int_0^T \int_\Omega [d_A(u_A^\epsilon)^{p} (h'(v^\epsilon))^2 h(v^\epsilon)^{p-3}+(d_A+d_B)(u_B^\epsilon)^{p} (k'(v^\epsilon))^2 k(v^\epsilon)^{p-3}]|\nabla_x v^\epsilon|^2 \right|\\
\le (p/2)\, \| h'(v^\epsilon)^2h(v^\epsilon)^{p-3}+k'(v^\epsilon)^2k(v^\epsilon)^{p-3}\|_{L^{\infty}}
\| \, |\nabla_x v^\epsilon|^2 \|_{L^{1+p/d}}
\| (u_A^\epsilon+u_B^\epsilon)^p \|_{L^{1+d/p}}
\le C_T\,(1+\|u_A^\epsilon+u_B^\epsilon\|_{L^{p+d}}^{p+d}),
\end{split} 
\end{equation}
thanks to Lemma \ref{prelim_v}.
\smallskip

The terms of the left-hand side of \eqref{entropy_estimate} being all nonnegative, they are all 
bounded  by the quantity $(1+\|u_A^\epsilon+u_B^\epsilon\|_{L^{p+d}}^{p+d})$. We then obtain the estimates announced in the lemma
by using the lower bound of $h$ and $k$ (remember Assumption B), and the 
following elementary inequality for all positive $x$, $y$ : 
$(x-y)\,(x^{p-1} - y^{p-1}) \ge C_{p}\, |x^{p/2}-y^{p/2}|^2$, where $C_{p}>0$ is a constant 
depending on $p$ (remember that $p \in ]1,p_0]$).
\end{proof}
\medskip

We now turn back to the proof of Proposition \ref{theo_sing1}.
\medskip

As a first consequence of Lemma \ref{entropy_estimates}, we can improve the Lebesgue space in which we get a uniform (w.r.t. $\var$) estimate for $u_A^\epsilon+u_B^\epsilon$. Taking $p=p_0$ in \eqref{eq:entropy_estimates} and using H\"older's inequality (remember that $d<a$),
we see that
$$ \|u_A^\epsilon+u_B^\epsilon\|_{L^{p_0+a}}^{p_0+a}\le C_T\, (1+  \|u_A^\epsilon+u_B^\epsilon\|_{L^{p_0+d}}^{p_0+d} ) \le  C_T\, (1+ 
\|u_A^\epsilon+u_B^\epsilon\|_{L^{p_0+a}}^{p_0+d} ), $$
so that
\begin{equation}\label{est:L_(p+a)_u}
\|u_A^\epsilon+u_B^\epsilon\|_{L^{p_0+a}}\le C_T.
\end{equation}

Let us combine estimate (\ref{est:L_(p+a)_u}) and Lemma \ref{prelim_v} with $q=1 + p_0/d>1$ to get
\begin{equation}\label{est:gradv}
\|\nabla_x v^\epsilon\|_{L^{2\,(1 + p_0/d)}}^2\le C_T,\qquad\|\partial_t v^\epsilon\|_{L^{1 + p_0/d}}%\le C_T\, (1+ \|(u_A^\epsilon+u_B^\epsilon)^d\|_{L^{1 + p_0/d}}) 
\le C_T.
\end{equation}
Then, 
from Aubin's lemma (see Theorem 5 in \cite{sim}), we can extract a subsequence - still
 called $(v^\epsilon)_\epsilon$ - which converges towards a limit $v$  
a.e. :
\begin{equation}\label{convergence_v}
v^\epsilon(t,x) \rightarrow v(t,x) \hspace{1cm} \text{almost everywhere on }[0,T]\times\Omega,
\end{equation}
and such that 
\begin{equation}\label{convergence_grad_v}
 \nabla_x v^\epsilon  \rightharpoonup  \nabla_x v  \qquad {\hbox{ weakly in }} \qquad L^{2\,(1 + p_0/d)} 
 \quad
{\hbox{ (and therefore in) }} \quad L^1 . 
\end{equation}
Thanks to this passage to the limit, the function $v$ automatically lies in $L^{\infty}$, and is nonnegative. Passing to the limit in estimate (\ref{vexa}), we get estimate (\ref{vex}).
 Finally, $\nabla_x v \in L^{2\,(1 + p_0/d)}$.
\smallskip

Recall now eq. \eqref{eq:3rd_u} for $u_{A}^\epsilon+u_{B}^\epsilon$. Notice that the reaction term in \eqref{eq:3rd_u} is uniformly bounded in $L^{\lambda}$ with $\lambda= \frac{p_0 +a}{1+a}>1$, thanks to estimate (\ref{est:L_(p+a)_u}).
 As a consequence, $\partial_t (u_{A}^\epsilon+u_{B}^\epsilon)$ in \eqref{eq:3rd_u} is uniformly bounded in $L^{\lambda}([0,T],W^{-2,\lambda})$. Furthermore, let us choose some $p$ in the interval $]1, p_0[$ and $\zeta=\zeta(p) \in ]0,1[$ such that $(2-p)\,\frac{1+\zeta}{1-\zeta} < p_0+a$ . Then for $C=A$ or $B$, H\"older's inequality
 implies
\begin{equation}\label{twn}\begin{split}
\|\, |\nabla_x u_{C}^\epsilon|^{1 + \zeta} \, \|_{L^1} \le
 \int_0^T\int \left[ |u_{C}^\epsilon|^{p/2-1}\, |\nabla_x u_{C}^\epsilon| \right]^{1+\zeta} \,\,
   |u_{C}^\epsilon|^{(1 - p/2)\, (1+\zeta)} \\
\le \left( \int_0^T\int \left[ |u_{C}^\epsilon|^{p/2-1}\, |\nabla_x u_{C}^\epsilon| \right]^{2} \right)^{\frac{1+\zeta}{2}} \,\, 
\left( \int_0^T\int \left[ |u_{C}^\epsilon|^{(2-p)\,\frac{1+\zeta}{1-\zeta}}
\right] \right)^{\frac{1-\zeta}{2}} \\
\le (2/p)^{1+\zeta} \, ||  \nabla_x (u_{C}^\epsilon)^{p/2}  ||_{L^2}^{1+\zeta}
\,\, 
\left( \int_0^T\int \left[ |u_{C}^\epsilon|^{(2-p)\,\frac{1+\zeta}{1-\zeta}}
\right] \right)^{\frac{1-\zeta}{2}} \le C_T,
\end{split}\end{equation}
thanks to Lemma \ref{entropy_estimates} and estimate (\ref{est:L_(p+a)_u}).
\par
 We therefore can apply Aubin's lemma to extract a subsequence (still called $(u_{A}^\epsilon+u_{B}^\epsilon)_\epsilon$) which converges towards a limit $u$  
a.e.: 
\begin{equation}\label{convergence_u}
u_A^\epsilon(t,x)+u_B^\epsilon(t,x) \rightarrow u(t,x) \hspace{1cm} \text{almost everywhere on }[0,T]\times\Omega .
\end{equation}
Thanks to estimate (\ref{est:L_(p+a)_u}) and Fatou's lemma, we know that $u \in L^{a+p_0}$.
 Moreover, $u \ge 0$ a.e. thanks to the passage to the limit a.e., and $ \nabla_x u \in L^{1 +\zeta}$
for some $\zeta>0$ small enough, thanks to estimate (\ref{twn}).
\medskip

We now use the following elementary inequality: for any $p\in ]0,2[$, there exists
a constant $C_p>0$ (which depends only on $p$)
such that
 \begin{equation} \label{forp}
\forall x \in \R_+, \qquad |x-1|\le C_p\, |x^{p/2}-1| \times |x^{1-p/2}+1|.
\end{equation}
 
Taking $p$ in the interval $]1, \min\{p_0, 2\}[$, we see that
\begin{equation} \label{starstar}\begin{split}
\int_0^T\int_\Omega |k(v^\epsilon)u_{B}^\epsilon-h(v^\epsilon)u_{A}^\epsilon|
\le 
C_p \int_0^T\int_\Omega |(u_B^\epsilon k(v^\epsilon))^{p/2}-(u_A^\epsilon h(v^\epsilon))^{p/2}| \times [(u_B^\epsilon k(v^\epsilon))^{1-p/2}+(u_A^\epsilon h(v^\epsilon))^{1-p/2}] \\
\le
C_p \left(\int_0^T\int_\Omega |(u_B^\epsilon k(v^\epsilon))^{p/2}-(u_A^\epsilon h(v^\epsilon))^{p/2}|^2\right)^{1/2}
\times \left(\int_0^T\int_\Omega [(u_B^\epsilon k(v^\epsilon))^{1-p/2}+(u_A^\epsilon h(v^\epsilon))^{1-p/2}]^2 \right)^{1/2}
\le \sqrt{\epsilon}\,  C_T,
\end{split}\end{equation}
thanks to Lemma \ref{entropy_estimates}. Then,
$u_B^\epsilon \,k(v^\epsilon)- u_A^\epsilon \,h(v^\epsilon)$ converges to $0$
 in $L^1$, and therefore, up to a subsequence,
\begin{equation}\label{convergence_hu_A-ku_B}
h(v^\epsilon(t,x))\,u_A^\epsilon(t,x)-k(v^\epsilon(t,x))\,u_B^\epsilon(t,x) \rightarrow 0 \hspace{1cm} \text{almost everywhere on }[0,T]\times\Omega.
\end{equation}

Thanks to the convergences \eqref{convergence_v}, \eqref{convergence_u} and \eqref{convergence_hu_A-ku_B},
 we can compute

\begin{equation}\label{convergence_u_A}
u_A^\epsilon(t,x)=\frac{k(v^\epsilon)\,(u_A^\epsilon+u_B^\epsilon) + [h(v^\epsilon)\,u_A^\epsilon-k(v^\epsilon)\,u_B^\epsilon]}{h(v^\epsilon)+k(v^\epsilon)} \rightarrow \frac{k(v)\,u}{h(v)+k(v)}=:u_A(t,x) \hspace{1mm} \text{almost everywhere on }[0,T]\times\Omega,
\end{equation}
and similarly
\begin{equation}\label{convergence_u_B}
u_B^\epsilon(t,x)=\frac{h(v^\epsilon)\,(u_A^\epsilon+u_B^\epsilon) - [h(v^\epsilon)\,u_A^\epsilon-k(v^\epsilon)\,u_B^\epsilon]}{h(v^\epsilon)+k(v^\epsilon)} \rightarrow \frac{h(v)\,u}{h(v)+k(v)}=:u_B(t,x) \hspace{1mm} \text{almost everywhere on }[0,T]\times\Omega.
\end{equation}
Up to another extraction, we see that, thanks to estimate (\ref{twn}), for $C=A,B$,
 \begin{equation}\label{convergence_grad_u}
 \nabla_x u_C^\epsilon  \rightharpoonup  \nabla_x u_C 
\qquad {\hbox{ weakly in }} \qquad L^1. 
\end{equation}

Extracting again subsequences, we can perform this proof on $[0,2T]$, $[0,3T]$, ..., so that by Cantor's diagonal argument, 
\begin{equation}\label{convul}
u_A^\epsilon(t,x) \to u_A(t,x), \quad  u_B^\epsilon(t,x) \to u_B(t,x), \quad
v^\epsilon(t,x) \to v(t,x)
\end{equation}
 for a.e. $(t,x)\in \R_+ \times\Omega$,
 where $u_A,u_B,v$ are
 defined on $\R_+\times\Omega$. It is clear that $u_A,u_B,v \ge 0$ a.e.
 Remembering the definition of $u_A$ and $u_B$, we also see that
$h(v)\, u_A = k(v)\,u_B$ a.e., and $u_A,u_B \in L^{a+p_0}$. Finally, we recall that
$v \in L^{\infty}$.
\medskip

Let us now show \eqref{es:V2}. Thanks to the uniform (in $\var$) estimates \eqref{es:supLp}, \eqref{eq:entropy_inverse_estimates} and \eqref{est:L_(p+a)_u}, we have for all $p\in]1,p_0]$,
\begin{equation}
\sup_{t\in [0,T]} \int_\Omega u^{p} (t) < +\infty \qquad \text{and} \qquad \int_0^T \int_\Omega |\nabla_x u_A^{p/2}|^2+|\nabla_x u_B^{p/2}|^2<+\infty,
\end{equation}
where we have used Fatou's lemma for the first inequality and Kakutani's Theorem applied to the reflexive space $L^2$ for the second inequality.
Remembering that $u=\frac{h(v)+k(v)}{k(v)}\, u_A$, we can see that for all $p\in]1,p_0]$,
\begin{align*}
\nabla_x \left(u^{p/2}\right) = \stackrel{\in L^{2(1+a/p_0)}}{\overbrace{\phantom{\Bigg|} u_A^{p/2} }} \stackrel{\in L^\infty}{\overbrace{\left[\left(\frac{h+k}{k}\right)^{p/2}\right]'(v)}} \stackrel{\in L^{2(1+p_0/d)}}{\overbrace{\phantom{\Bigg|} \nabla_x v}} + \stackrel{\in L^\infty}{\overbrace{\left(\frac{h(v)+k(v)}{k(v)}\right)^{p/2}}} \,\stackrel{\in L^2}{\overbrace{\phantom{\Bigg|} \nabla_x \left(u_A^{p/2}\right)}} \;\in\; L^2.
\end{align*}

In order to conclude the proof of Proposition \ref{theo_sing1}, it only remains to check that $(u,v)=(u_A+u_B,v)$ is a weak solution of \eqref{sku1}--\eqref{sku4} in the sense of Definition~\ref{defimer}.
\par
 Let $\psi_1$, $\psi_2 \in C^1_c(\R_+\times {\ov{\Omega}})$ be test functions.
 Multiplying all terms of the two first equations of \eqref{eq:3rd_u} by $\psi_1$, multiplying all terms of equation \eqref{eq:3rd} by $\psi_2$,
 and integrating on $\R_+\times\Omega$, we get
\begin{align}\label{wee1}
 - \int_0^{\infty}\int_{\Omega} \pa_t \psi_1\, (u_A^\epsilon+u_B^\epsilon) - \int_{\Omega} \psi_1(0,\cdot)\, (u_{A,in}^\epsilon+u_{B,in}^\epsilon) 
+ \int_0^{\infty}\int_{\Omega} \nabla_x \psi_1 \cdot \nabla_x( M^\epsilon\, (u_A^\epsilon+u_B^\epsilon))\\ =  \int_0^{\infty}\int_{\Omega} \psi_1\, (u_A^\epsilon+u_B^\epsilon)\, (r_u - r_{a}\, (u_A^\epsilon+u_B^\epsilon)^a - r_{b}\,(v^\epsilon)^b), 
\end{align}
\begin{equation}\label{wee2}
 - \int_0^{\infty}\int_{\Omega} \pa_t \psi_2\, v^\epsilon - \int_{\Omega} \psi_2(0,\cdot)\, v_{in}^\epsilon 
+ d_v\int_0^{\infty}\int_{\Omega} \nabla_x \psi_2 \cdot \nabla_x v^\epsilon = \int_0^{\infty}\int_{\Omega} \psi_2\,
 v^\epsilon\, (r_v - r_{c}\,(v^\epsilon)^c - r_d\, (u_A^\epsilon+u_B^\epsilon)^d). 
 \end{equation}
Note that thanks to (\ref{convul}), 
 $$\pa_t \psi_1\, (u_A^\epsilon+u_B^\epsilon) \to (\pa_t \psi_1)\, u, $$
for a.e. $(t,x)\in \R_+ \times\Omega$, and $\psi_1\, (u_A^\epsilon+u_B^\epsilon)$ is bounded
(uniformly w.r.t. $\var \in ]0,1[$) in $L^{p_0+a}$ thanks to (\ref{est:L_(p+a)_u}), so that
\begin{equation}\label{weec1}
 - \int_0^{\infty}\int_{\Omega} \pa_t \psi_1\, (u_A^\epsilon+u_B^\epsilon)  \to
 - \int_0^{\infty}\int_{\Omega} (\pa_t \psi_1)\, u . 
 \end{equation}
In the same way, since $v^\var$ is uniformly bounded w.r.t. $\var\in ]0,1[$, (\ref{convul}) and (\ref{convergence_grad_v})
imply that
\begin{equation}\label{weec2}
  - \int_0^{\infty}\int_{\Omega} \pa_t \psi_2\, v^\epsilon \to - \int_0^{\infty}\int_{\Omega} \pa_t \psi_2\, v, \quad  d_v\int_0^{\infty}\int_{\Omega} \nabla_x \psi_2 \cdot \nabla_x v^\epsilon
\to
d_v\int_0^{\infty}\int_{\Omega} \nabla_x \psi_2 \cdot \nabla_x v,
\end{equation}
 Then, we observe that $\psi_1\, (u_A^\epsilon+u_B^\epsilon)\, (r_u - r_{a}\, (u_A^\epsilon+u_B^\epsilon)^a - r_{b}\,(v^\epsilon)^b)$ is bounded (uniformly w.r.t. $\var \in ]0,1[$) in $L^{\frac{p_0+a}{1+a}}$
thanks to (\ref{est:L_(p+a)_u}), so that (\ref{convul}) implies that
\begin{equation}\label{weec3}
 \int_0^{\infty}\int_{\Omega} \psi_1\, (u_A^\epsilon+u_B^\epsilon)\, (r_u - r_{a}\, (u_A^\epsilon+u_B^\epsilon)^a - r_{b}\,(v^\epsilon)^b) \to
\int_0^{\infty}\int_{\Omega} \psi_1\, u\, (r_u - r_{a}\, u^a - r_{b}\,v^b) . 
\end{equation}
In the same way, $\psi_2\,
 v^\epsilon\, (r_v - r_{c}\,(v^\epsilon)^c - r_d\, (u_A^\epsilon+u_B^\epsilon)^d)$ 
 is bounded (uniformly w.r.t. $\var \in ]0,1[$) in $L^{\frac{p_0+a}{d}}$, so that
 \begin{equation}\label{weec4}
 \int_0^{\infty}\int_{\Omega} \psi_2\,
 v^\epsilon\, (r_v - r_{c}\,(v^\epsilon)^c - r_d\, (u_A^\epsilon+u_B^\epsilon)^d)
 \to \int_0^{\infty}\int_{\Omega} \psi_2\,
 v\, (r_v - r_{c}\,v^c - r_d\, u^d).
 \end{equation}
 According to the definition of $u_{A,in}^\epsilon$ and $u_{B,in}^\epsilon$, it is clear that
 $u_{A,in}^\epsilon \to u_{A,in}$ a.e. on $\Omega$, and $u_{B,in}^\epsilon \to u_{B,in}$ a.e.
 on $\Omega$, so that $u_{A,in}^\epsilon + u_{B,in}^\epsilon \to u_{in}$ a.e. on $\Omega$. But
 $u_{in} \in L^{p_0}(\Omega)$, so that $u_{A,in}$ and $u_{B,in}$ also lie in $L^{p_0}(\Omega)$,
and $u_{A,in}^\epsilon,  u_{B,in}^\epsilon$ are bounded (uniformly w.r.t. $\var \in ]0,1[$) in 
$L^{p_0}(\Omega)$.
Then
\begin{equation}\label{weec5}
\int_{\Omega} \psi_1(0,\cdot)\, (u_{A,in}^\epsilon+u_{B,in}^\epsilon)  \to
\int_{\Omega} \psi_1(0,\cdot)\, u .
\end{equation}
In the same way, observing that $v_{in}^\var$ is bounded (uniformly w.r.t. $\var \in ]0,1[$) in $L^{\infty}(\Omega)$, we see that
\begin{equation}\label{weec6}
 - \int_{\Omega} \psi_2(0,\cdot)\, v_{in}^\epsilon \to - \int_{\Omega} \psi_2(0,\cdot)\, v_{in} . 
 \end{equation}
It remains to study the convergence of $\nabla_x \psi_1\cdot \nabla_x [M^\var\,(u_A^\var + u_B^\var)]$.
But $M^\var\,(u_A^\var + u_B^\var) = d_A\, u_A^\var + (d_A+d_B)\, u_B^\var$, so that
$M^\var\,(u_A^\var + u_B^\var) \to d_A\, u_A + (d_A+d_B)\, u_B = d_A \,u + d_B\,u_B 
= d_A \, u + d_B \,\frac{h(v)\,u}{h(v)+k(v)} = (d_u+ \phi(v))\,u$ a.e. on $\R_+ \times \Omega$.
Then, using the convergence (\ref{convergence_grad_u}),
\begin{equation}\label{weec7} \int_0^{\infty}\int_{\Omega} \nabla_x \psi_1\cdot \nabla_x [M^\var\,(u_A^\var + u_B^\var)] \to
 \int_0^{\infty}\int_{\Omega} \nabla_x \psi_1\cdot  \nabla_x [(d_u+ \phi(v))\,u] . 
 \end{equation}
Note that this automatically implies the estimate $\nabla_x (\phi(v)\,u) \in L^1$. It is however 
possible to directly get it by
 using estimate \eqref{twn} and the fact that $\nabla_x v\in L^{2(1+p_0/d)}$.
Indeed, one can get a slightly better estimate:
\begin{align*}
\nabla_x \left[ (d_u + \phi(v)) u\right] = \stackrel{\in L^{\infty}}{\overbrace{\phantom{\big|} \left(d_u + \phi(v) \right)}} \stackrel{\in L^{1+\zeta}}{\overbrace{\phantom{\big|} \nabla_x u}} + \stackrel{\in L^{p_0+a}}{\overbrace{\phantom{\big|} u}} \,\stackrel{\in L^{2(1+p_0/d)}}{\overbrace{\phantom{\big|} \nabla_x \phi(v)}} \;\in\; L^{1+\zeta'}  \qquad \text{for some }\zeta,\,\zeta'>0.
\end{align*}
 This concludes the proof of Proposition \ref{theo_sing1}.
\end{proof}
\medskip

We now turn to the

\begin{proof}[Proof of Proposition \ref{theo_sing2}]
 As in Proposition \ref{theo_sing1}, we recall that
for a given $\var\in ]0,1[$, standard theorems for reaction-diffusion equations imply the existence of a (global, nonnegative for each component) strong solution $(u_A^\var,u_B^\var, v^\var)$ in the sense of Definition \ref{defimerd}, to system
(\ref{eq:3rd}), (\ref{Neumann_3RD}), (\ref{initial_3RD}). Moreover, properties \eqref{es:eps_depend} hold  with $p_0=2$.
 We again refer to \cite{desv_parma} for complete proofs.
\medskip

Note first that the estimates of Lemmas \ref{prelim_u} and \ref{prelim_v}
still hold under the assumptions of Proposition \ref{theo_sing2},
with $p_0=2$ in the case of Lemma  \ref{prelim_v}. 
\medskip

More precisely,  the following (uniform w.r.t. $\var\in ]0,1[$)
estimates hold, the proofs being identical to those 
of Lemmas \ref{prelim_u} and  \ref{prelim_v}:
\begin{equation} \label{prelim_upr}
\sup_{0\le t \le T}\int_\Omega (u_{A}^\epsilon+u_{B}^\epsilon)(t)\le C_T; \hspace{1cm}  \|u_A^\epsilon+u_B^\epsilon\|_{L^{1+a}}\le C_T,
\end{equation}
and for all $1<q\le 1+ 2/d$, 
\begin{equation} \label{prelim_vpr}
\|v^\epsilon\|_{L^\infty}\le C_T; \hspace{1cm} \|\nabla_x v^\epsilon\|_{L^{2q}}^2\le C_T \,(1+ \|(u_A^\epsilon+u_B^\epsilon)^d\|_{L^{q}}); \hspace{1cm}\|\partial_t v^\epsilon\|_{L^q}\le C_T\, (1+ \|(u_A^\epsilon+u_B^\epsilon)^d\|_{L^{q}}).
\end{equation}
In fact, estimate (\ref{vexa}) still holds (it is the explicit version of the first part of (\ref{prelim_vpr})).
\medskip

As a consequence, for all $p \in ]0,1[$, taking $q= 1+p/d \le 1 + 2/d$, 
\begin{equation} \label{aappdd}
 \|\, |\nabla_x v^\epsilon|^2 \|_{L^{1+p/d}}
\le C_T \,(1+ \|(u_A^\epsilon+u_B^\epsilon)^d\|_{L^{1+p/d}});
 \hspace{1cm}\|\partial_t v^\epsilon\|_{L^{1+p/d}}\le C_T\, (1+ \|(u_A^\epsilon+u_B^\epsilon)^d\|_{L^{1+p/d}}).
\end{equation}

We now introduce a duality lemma in the spirit of the one used in \cite{CDF}:
\medskip

\begin{lem}\label{le:duality}
We consider $T>0$, $\Omega$ a bounded regular open set of $\R^N$ ($N\in \N^*$), and a function $M: = M(t,x)$ satisfying
\begin{equation}
0<m_0\le M (t,x) \le m_1 \qquad \text{for } t\in [0,T]\text{ and } x\in \Omega,
\end{equation}
for some constants $m_0, m_1>0$. Then, one can find $p^\ast>2$ such that for all $r\in[2,p^\ast[$, there exists a constant $C_T>0$ depending only on $\Omega$, $N$, $T$, and the constants $m_0$, $m_1$, $r$, such that for any initial datum $u_{in}$ in $L^2(\Omega)$ and any $K>0$, all nonnegative strong solutions $u\in L^r([0,T]\times\Omega)$ of the system
\begin{equation}\label{eq:duality_lemma}\left\{\begin{aligned}
& \partial_{t}u - \Delta_x (Mu) \leq K \text{ in }[0,T]\times\Omega,\\
& u(0,x)=u_{in}(x) \text{ in } \Omega,\\
& \nabla_x (Mu)(t,x)\cdot n(x) = 0 \text{ on } [0,T]\times\partial\Omega,
\end{aligned}\right.\end{equation}
satisfy
\begin{equation}\label{dual_estimate}
\|u\|_{L^r([0,T]\times\Omega)} \le C_T\, \left( \|u_{in}\|_{L^2(\Omega)}+K \right).
\end{equation}
\end{lem}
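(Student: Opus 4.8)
The plan is to prove this by a duality argument in the spirit of \cite{CDF}, the essential new ingredient being an improved ($L^q$ for $q$ slightly larger than $2$) maximal regularity estimate for the dual (backward) problem. Since $u \ge 0$, one has $\|u\|_{L^r} = \sup\{ \int_0^T\int_\Omega u\,\theta : \theta \ge 0,\ \|\theta\|_{L^{r'}} \le 1\}$, where $r'$ denotes the conjugate exponent of $r$ (so $r'\le 2$). Given such a $\theta$, I would solve the backward dual problem
\[
-\pa_t w - M\,\Delta_x w = \theta \quad \text{in } [0,T]\times\Omega, \qquad w(T,\cdot)=0, \qquad \nabla_x w\cdot n = 0 \ \text{ on } [0,T]\times\pa\Omega
\]
(approximating $M$ by smooth coefficients if necessary, all estimates being independent of the smoothing). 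The maximum principle yields $w \ge 0$. Testing the differential inequality in \eqref{eq:duality_lemma} against $w \ge 0$ and integrating by parts twice in space (the boundary terms vanishing thanks to the two Neumann conditions, the pairings being licit since $u \in L^r$ while $\pa_t w,\ \Delta_x w \in L^{r'}$) produces the fundamental inequality
\[
\int_0^T\int_\Omega u\,\theta \ \le\ \int_\Omega u_{in}\, w(0,\cdot) \ +\ K\int_0^T\int_\Omega w .
\]
It then suffices to bound both $\|w(0,\cdot)\|_{L^2(\Omega)}$ and $\|w\|_{L^1([0,T]\times\Omega)}$ by $C_T\,\|\theta\|_{L^{r'}}$ and to take the supremum over $\theta$.

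The heart of the proof is the claim that there exists $p^\ast > 2$ such that, for every exponent $q$ in the conjugation-symmetric interval $](p^\ast)',\,p^\ast[$ around $2$, the dual solution obeys $\|\Delta_x w\|_{L^q} \le C\,\|\theta\|_{L^q}$. To obtain it I would freeze the coefficient at the constant $\ov M := (m_0+m_1)/2$, rewrite the equation as $-\pa_t w - \ov M\,\Delta_x w = \theta + (\ov M - M)\,\Delta_x w$, and invoke the classical constant-coefficient parabolic maximal regularity (valid for all $q\in\,]1,\infty[$ after a rescaling of time). Writing $\kappa_q$ for the constant in the resulting bound $\|\ov M\,\Delta_x w\|_{L^q}\le \kappa_q\,\|F\|_{L^q}$ (with $F$ the right-hand source), the $L^2$ energy estimate gives $\kappa_2 = 1$, while the multiplier in front of the perturbation is controlled by $\gamma := \|(\ov M - M)/\ov M\|_{L^\infty} \le (m_1-m_0)/(m_1+m_0) < 1$. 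Hence $\|\ov M\,\Delta_x w\|_{L^q}\le \kappa_q\,\|\theta\|_{L^q} + \kappa_q\,\gamma\,\|\ov M\,\Delta_x w\|_{L^q}$, and the perturbation is absorbable exactly when $\kappa_q\,\gamma < 1$. As $\kappa_2\,\gamma = \gamma < 1$, as $q\mapsto \kappa_q$ is continuous at $2$ (by Riesz--Thorin interpolation between the endpoint $q=2$, where $\kappa_2=1$, and any fixed exponent $q_1$ with $\kappa_{q_1}<\infty$), and as $\kappa_q = \kappa_{q'}$ by the time-reversal duality of the heat operator, the set $\{q : \kappa_q\gamma<1\}$ is an open conjugation-symmetric interval containing $2$; this defines $p^\ast>2$.

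With this estimate at hand, fix $r\in[2,p^\ast[$, so that $r' \in\,](p^\ast)',2]$ lies in the good range. The maximal regularity bound gives $\|\Delta_x w\|_{L^{r'}}\le C\,\|\theta\|_{L^{r'}}$, and the equation then controls $\|\pa_t w\|_{L^{r'}}$, whence $\|w\|_{W^{2,1}_{r'}} \le C_T\,\|\theta\|_{L^{r'}}$ (the zeroth-order term being recovered from $w(T,\cdot)=0$ by integration in time). The bound $\|w\|_{L^1} \le C_T\,\|\theta\|_{L^{r'}}$ follows since $[0,T]\times\Omega$ is bounded, while the trace theorem combined with the Sobolev embedding $W^{2-2/r',\,r'}(\Omega) \hookrightarrow L^2(\Omega)$ — valid for $r'$ close enough to $2$, which we secure by shrinking $p^\ast$ if necessary (at $r'=2$ this reduces to $H^1\hookrightarrow L^2$) — yields $\|w(0,\cdot)\|_{L^2}\le C_T\,\|\theta\|_{L^{r'}}$. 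Substituting into the fundamental inequality and taking the supremum over admissible $\theta$ produces \eqref{dual_estimate}, with $p^\ast$ and $C_T$ depending only on $\Omega$, $N$, $T$, $m_0$, $m_1$ and $r$ as required.

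I expect the main obstacle to be the rigorous justification of the improved maximal regularity step: carefully verifying the continuity and the conjugation-symmetry of $q\mapsto\kappa_q$ near $q=2$, and tracking how the resulting threshold $p^\ast$ (together with the final trace/Sobolev embedding) depends on $N$, $m_0$ and $m_1$. The remaining points — the maximum principle and sign of $w$, the two integrations by parts, and the passage from $W^{2,1}_{r'}$ bounds to the $L^1$ and $L^2$ controls — are routine.
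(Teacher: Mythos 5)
Your proposal is correct and follows essentially the same route as the paper: a duality argument built on the backward dual problem $\pa_t w + M\,\Delta_x w = -\theta$, the sign of $w$ from the maximum principle, the integration-by-parts inequality $\int u\,\theta \le \int u_{in}\,w(0,\cdot) + K\int w$, and an $L^q$ maximal-regularity estimate for the non-constant coefficient $M$ (for $q=r'$ near $2$) obtained by freezing at the mean coefficient $(m_0+m_1)/2$ and absorbing the perturbation. The only real difference is that the paper delegates exactly these ingredients to Lemma 2.2 and Remark 2.3 of \cite{CDF} --- your absorption condition $\kappa_q\,\gamma<1$ is, after renormalization, the paper's condition $C_{\frac{m_0+m_1}{2},q}\,\frac{m_1-m_0}{2}<1$, and your trace/Sobolev requirement $r'\ge \frac{2(N+2)}{N+4}$ is precisely its restriction $q>2\,\frac{N+2}{N+4}$ --- whereas you re-derive them inline (interpolation continuity of $\kappa_q$ at $q=2$, parabolic trace plus Sobolev embedding for $\|w(0,\cdot)\|_{L^2}$), making your version self-contained where the paper is not.
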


\begin{proof}[Proof of Lemma \ref{le:duality}]
It relies on the study of the dual problem
\begin{equation}\label{eq:dual_problem}\left\{\begin{aligned}
& \partial_{t}v + M \Delta_x v = - f \text{ in }[0,T]\times\Omega,\\
& v(T,x)=0 \text{ in } \Omega,\\
& \nabla_x v(t,x)\cdot n(x) = 0 \text{ on } [0,T]\times\partial\Omega,
\end{aligned}\right.\end{equation}
for $f$ a nonnegative function in $L^{r'}([0,T]\times\Omega)$, with $\frac1r + \frac1{r'} =1$.
\medskip

Using the notations of \cite{CDF}, we define the constant $C_{m,q}>0$ for $m>0$, $q\in]1,2]$ as the best constant in the parabolic estimate
\begin{equation}\label{heat_estimate}
\|\Delta_x w\|_{L^q([0,T]\times\Omega)} \le C_{m,q}\, \|g\|_{L^q([0,T]\times\Omega)},
\end{equation}
where $g$ is any function in ${L^q([0,T]\times\Omega)}$ and $w$ is the solution of the backward heat equation \begin{equation}\label{eq:forward_heat}\left\{\begin{aligned}
& \partial_{t}w + m\, \Delta_x w = g \text{ in }[0,T]\times\Omega,\\
& w(T,x)=0 \text{ in } \Omega,\\
& \nabla_x w(t,x)\cdot n(x) = 0 \text{ on } [0,T]\times\partial\Omega.
\end{aligned}\right.\end{equation}

 Let $r\ge 2$, $q=r'\le2$ and let $f$ be any smooth function defined on $[0,T]\times\Omega$. We consider the solution $v$ of system \eqref{eq:dual_problem}. Notice that thanks to the minimum principle, $v$ is nonnegative. Then, from Lemma 2.2 and Remark 2.3 in \cite{CDF}, there exists a constant $C_T$ depending only on $\Omega$, $N$, $T$ and $m_0$, $m_1$, $q$ such that $v$ satisfies
\begin{equation}\label{estimate_cdf_delta}
\|\Delta_x v\|_{L^q} \le C_T\, \|f\|_{L^q},
\end{equation}
and
\begin{equation}\label{estimate_cdf_initial}
\|v(0,\cdot)\|_{L^2(\Omega)} \le C_T \, \|f\|_{L^q},
\end{equation}
provided that $q>2\frac{N+2}{N+4}$ and
\begin{equation}\label{condition_constant}
C_{\frac{m_0+m_1}{2},q} \, \frac{m_1-m_0}{2} < 1.
\end{equation}
Let us first assume that condition \eqref{condition_constant} holds
 for some fixed $q\in]2\frac{N+2}{N+4},2]$. Then we compute (for a.e. $t\in[0,T]$)
\begin{equation}
\frac{d}{dt}\int_{\Omega}u(t)v (t) \le K \int_\Omega v(t) - \int_\Omega u(t)f (t),
\end{equation}
so that integrating w.r.t. time, and using the condition $v(T,\cdot)=0$,
\begin{equation}
\int_0^T \int_\Omega uf \le K \int_0^T\int_\Omega v + \int_{\Omega}u_{in}\, v(0,\cdot).
\end{equation}
The first term is estimated with \eqref{estimate_cdf_delta}:
\begin{equation}
\int_0^T\int_\Omega v = - \int_0^T\int_\Omega \int_t^T \partial_t v = \int_0^T\int_\Omega \int_t^T \left(f + M \Delta_x v \right) 
\end{equation}
$$\le T \left( \int_0^T\int_\Omega [f + m_1 |\Delta_x v|] \right)
\le T^{1+1/p} \, |\Omega|^{1/p}\, \left( 1+m_1 C_T \right) \|f\|_{L^q}, $$
and the second term with \eqref{estimate_cdf_initial}:
\begin{equation}
\int_{\Omega}u_{in}\,v(0,\cdot) \le \|u_{in}\|_{L^2(\Omega)}\, \|v(0,\cdot)\|_{L^2(\Omega)} \le C_T\, \|f\|_{L^q} \,
\|u_{in}\|_{L^2(\Omega)}.
\end{equation}
Recombining those estimates, we get
\begin{equation}
\int_0^T \int_\Omega u\, f \le C_T \left( K + \|u_{in}\|_{L^2(\Omega)}\right) \|f\|_{L^q},
\end{equation}
which, by duality, gives estimate \eqref{dual_estimate} (note that it is sufficient to show the previous bound for smooth $f$, since all functions of $L^q$ can be approximated by such smooth functions in the $L^q$ norm).

It remains to check that there exists an interval $[2,p^\ast[$ in which any $r$ satisfies condition \eqref{condition_constant} with $q=r'$. This is done in \cite{CDF}.
\end{proof}
\medskip

We now come back to the proof of Proposition \ref{theo_sing2}.
\medskip

As in the proof of Proposition \ref{theo_sing1}, we add the two equations and get
\begin{equation}\label{newse}
 \partial_{t} (u_{A}^\epsilon+u_{B}^\epsilon) - \Delta_x [M^\epsilon (u_{A}^\epsilon+u_{B}^\epsilon)] = [r_u-r_{a}(u_{A}^\epsilon+u_{B}^\epsilon)^a-r_{b}(v^\epsilon)^b](u_{A}^\epsilon+u_{B}^\epsilon),
\end{equation} 
with
$$M^\epsilon = \frac{d_A u_A^\epsilon + (d_A + d_B) u_B^\epsilon}{u_{A}^\epsilon+u_{B}^\epsilon}. $$
Then $ d_A \le M^\epsilon  \le d_A + d_B$, and 
$$ [r_u-r_{a}(u_{A}^\epsilon+u_{B}^\epsilon)^a-r_{b}(v^\epsilon)^b]
\,(u_{A}^\epsilon+u_{B}^\epsilon) \le \sup_{w \ge 0} [r_u-r_{a}\,w^a]\,w = \bigg( \frac{r_u}{(1+a)\,r_a} \bigg)^{1/a} := K. $$
We can apply Lemma \ref{le:duality} to eq. (\ref{newse}), with $M$ replaced by $M^\var$, $u$ replaced by $u_A^\var + u_B^\var$, and $u_{in}$ replaced by $u_{A,in}^\var + u_{B,in}^\var$.
Note that for any $\var>0$, $u_A^\var + u_B^\var$ is a strong solution of eq. (\ref{newse}).
\smallskip

 Lemma \ref{le:duality} implies that for some $p^{\ast}>2$,
\begin{equation}\label{star}
|| u_A^\var + u_B^\var||_{L^{p^\ast}} \le C_T .
\end{equation}

Using estimates (\ref{star}) and (\ref{prelim_vpr}), we see that
when $p \in ]0, p^\ast - d]$,
\begin{equation}\label{starbi}
 \|\,\nabla_x v^\epsilon \|_{L^{2\,(1+p/d)}}
\le C_T;
 \hspace{1cm}\|\partial_t v^\epsilon\|_{L^{1+p/d}}\le C_T .
\end{equation}

Thanks to estimates (\ref{prelim_vpr}), (\ref{starbi}), we can extract from $(v^\epsilon)_{\epsilon>0}$ a subsequence 
(still denoted $(v^\epsilon)_{\epsilon>0}$) which converges a.e. towards some $v \in L^{\infty}$, and such that
$\nabla_x v^\epsilon$ converges weakly in $L^{2\,(1+p/d)}$ (and therefore in $L^1$) towards $\nabla_x v$.
\medskip

Recalling definition (\ref{tw}) and computation (\ref{two}) in the case when $p \in ]0, \inf(1, p^\ast - d) [$,
 we use the inequality (for a.e. $t\in[0,T]$)
$$  - d_A \int_\Omega \Delta_x u_A^\epsilon (u_A^\epsilon)^{p-1} h(v^\epsilon)^{p-1}
= - 4\,d_A \,\frac{(1-p)}{p^2}\int_\Omega |\nabla_x [(u_A^\epsilon)^{p/2}] |^2 h(v^\epsilon)^{p-1}$$
$$ - d_A\,(1-p) \int_\Omega  (u_A^\epsilon)^{p-1} h'(v^\epsilon)\, h(v^\epsilon)^{p-2}\,
\nabla_x u_A^\epsilon \cdot \nabla_x v^\epsilon $$
$$\le - 2 \,d_A \,\frac{(1-p)}{p^2}\int_\Omega |\nabla_x [(u_A^\epsilon)^{p/2}] |^2 h(v^\epsilon)^{p-1} + \frac{(1-p)}{2}\, d_A\int_\Omega |\nabla_x v^\epsilon|^2 (u_A^\epsilon)^{p} 
(h'(v^\epsilon))^2 h(v^\epsilon)^{p-3}, $$
and the corresponding inequality for $u_B^\var$ (with $d_A$ replaced by $d_A+d_B$)
and get the estimate
\begin{equation}\label{entropy_inverse_estimate}\begin{split}
\int_\Omega \bigg[ h(v_{in}^\epsilon)^{p-1}\frac{(u_{A,in}^\epsilon)^p}{p}
+ k(v_{in}^\epsilon)^{p-1}\frac{(u_{B,in}^\epsilon)^p}{p} \bigg] \\
+ \,2\, d_A \frac{1-p}{p^2}\int_0^T\int_\Omega |\nabla_x [(u_A^\epsilon)^{p/2}] |^2 h(v^\epsilon)^{p-1}
+ 2\, (d_A+d_B) \frac{1-p}{p^2}\int_0^T\int_\Omega |\nabla_x [(u_B^\epsilon)^{p/2}] |^2 k(v^\epsilon)^{p-1}\\
-\frac{1}{\epsilon} \int_\Omega [k(v^\epsilon)u_{B}^\epsilon-h(v^\epsilon)u_{A}^\epsilon] [(u_B^\epsilon)^{p-1} k(v^\epsilon)^{p-1}-(u_A^\epsilon)^{p-1} h(v^\epsilon)^{p-1}]\\
\le
 \int_\Omega \bigg[ h(v^\epsilon)^{p-1}\frac{(u_A^\epsilon)^p}{p}(T)
+ k(v^\epsilon)^{p-1}\frac{(u_B^\epsilon)^p}{p}(T) \bigg] \\
+\frac{1-p}{p}\int_0^T\int_\Omega \partial_t v^\epsilon [h'(v^\epsilon)h(v^\epsilon)^{p-2}(u_A^\epsilon)^p+k'(v^\epsilon)k(v^\epsilon)^{p-2}(u_B^\epsilon)^p]\\
- \int_0^T\int_\Omega [r_u-r_{a}(u_{A}^\epsilon+u_{B}^\epsilon)^a-r_{b}(v^\epsilon)^b][(u_A^\epsilon)^{p} h(v^\epsilon)^{p-1}+(u_B^\epsilon)^{p} k(v^\epsilon)^{p-1}]\\
+ \frac{1-p}{2}\int_0^T \int_\Omega [d_A(u_A^\epsilon)^{p} (h'(v^\epsilon))^2 h(v^\epsilon)^{p-3}+(d_A+d_B)(u_B^\epsilon)^{p} (k'(v^\epsilon))^2 k(v^\epsilon)^{p-3}]|\nabla_x v^\epsilon|^2 .
\end{split} 
\end{equation}
Note that in estimate (\ref{entropy_inverse_estimate}), the first and third term of the r.h.s. are clearly
bounded (w.r.t. $\var \in ]0,1[$) thanks to estimates~(\ref{prelim_upr}), (\ref{prelim_vpr}),
 and (\ref{star}) (remember that 
$p \in ]0,1[$). 
\par
The second term is estimated thanks to the following inequality (remember that $p \in ]0, \inf(1, p^\ast - d) [$, and that estimates (\ref{star}), (\ref{starbi}) hold):

\begin{equation}\label{dt_v_in_entropy_estimate_pr}
\begin{split}
\left| \frac{1-p}{p}\int_0^T\int_\Omega \partial_t v^\epsilon [h'(v^\epsilon)h(v^\epsilon)^{p-2}(u_A^\epsilon)^p+k'(v^\epsilon)k(v^\epsilon)^{p-2}(u_B^\epsilon)^p]\right|\\
\le  \| h'(v^\epsilon)h(v^\epsilon)^{p-2}+k'(v^\epsilon)k(v^\epsilon)^{p-2}\|_{L^{\infty}}
\| \partial_t v^\epsilon \|_{L^{1+p/d}}
\| (u_A^\epsilon+u_B^\epsilon)^p \|_{L^{1+d/p}}
\le C_T\,(1+\|u_A^\epsilon+u_B^\epsilon\|_{L^{p+d}}^{p+d}) \le C_T.
\end{split} 
\end{equation}
Finally, the last term is estimated thanks to the inequality (we still use $p \in ]0, \inf(1, p^\ast - d) [$, and estimates (\ref{star}), (\ref{starbi})):

\begin{equation}\label{nabla_v_in_entropy_estimate_pr}
\begin{split}
\left| \frac{1-p}{2}\int_0^T \int_\Omega [d_A(u_A^\epsilon)^{p} (h'(v^\epsilon))^2 h(v^\epsilon)^{p-3}+(d_A+d_B)(u_B^\epsilon)^{p} (k'(v^\epsilon))^2 k(v^\epsilon)^{p-3}]|\nabla_x v^\epsilon|^2 \right|\\
\le (p/2)\, \| h'(v^\epsilon)^2h(v^\epsilon)^{p-3}+k'(v^\epsilon)^2k(v^\epsilon)^{p-3}\|_{L^{\infty}}
\| \, |\nabla_x v^\epsilon|^2 \|_{L^{1+p/d}}
\| (u_A^\epsilon+u_B^\epsilon)^p \|_{L^{1+d/p}}\\
\le C_T\,(1+\|u_A^\epsilon+u_B^\epsilon\|_{L^{p+d}}^{p+d}) \le C_T.
\end{split} 
\end{equation}
\medskip

Finally, we end up with the following (uniform w.r.t. $\var \in ]0,1[$) 
estimates (for $p \in ]0, \inf(1, p^\ast - d) [$):
\begin{equation}\label{estinj}
\int_0^T\int_\Omega |\nabla_x [(u_A^\epsilon)^{p/2}] |^2 h(v^\epsilon)^{p-1} \le C_T, \qquad
\int_0^T\int_\Omega |\nabla_x [(u_B^\epsilon)^{p/2}] |^2 k(v^\epsilon)^{p-1}\le C_T,
\end{equation}
and
\begin{equation}\label{estinj22}
- \frac{1}{\epsilon} \int_\Omega [k(v^\epsilon)u_{B}^\epsilon-h(v^\epsilon)u_{A}^\epsilon]
\, [(u_B^\epsilon)^{p-1} k(v^\epsilon)^{p-1}-(u_A^\epsilon)^{p-1} h(v^\epsilon)^{p-1}]
\le C_T.
\end{equation}

Remembering that $h,k$ lie in $C^1(\R_+)$, and that $v^\epsilon$ is uniformly bounded (thanks to
estimate (\ref{prelim_vpr})), we see that estimate (\ref{estinj}) implies
(for
$p \in ]0, \min(1, p^\ast -d)[$), the bound
\begin{equation}\label{estinj2}
||\nabla_x [(u_A^\var)^{p/2}] ||_{L^2} \le C_T, \qquad || \nabla_x [(u_B^\var)^{p/2}] ||_{L^2} \le C_T.
\end{equation}
Then, using the elementary inequality (for $p \in ]0,1[$)
$$ \forall x,y\in\R, \qquad  - (x-y)\, (x^{p-1} - y^{p-1}) \ge C_p \, | x^{p/2} - y^{p/2}|^2, $$
where $C_p>0$ is a constant (only depending on $p$),
we obtain (for
$p \in ]0, \min(1, p^\ast -d)[$),
$$ || (h(v^\var)\, u_A^\var)^{p/2} - (k(v^\var)\, u_B^\var)^{p/2} ||_{L^2}
 \le C_T\, \sqrt{\var}. $$
 
\smallskip

Moreover, thanks to estimate (\ref{star}), eq. (\ref{newse}) implies that
 $\pa_t(u_A^\var + u_B^\var)$ is bounded in $L^{\lambda}([0,T],W^{-2,\lambda})$ with $\lambda=\frac{p^\ast}{1+a}>1$ (remember that $a\le 1$).
 Finally, for $C=A,B$, we still can use the computation of estimate (\ref{twn}) and get, for 
 $p \in ]0, \min(1, p^\ast -d)[$, and selecting $\zeta=\zeta(p)\in ]0,1[$ such that 
$(2-p)\, \frac{1+\zeta}{1-\zeta} <1$, 
thanks to the bounds (\ref{estinj}) and (\ref{star}),
\begin{equation}\label{twn2}\begin{split}
\|\, |\nabla_x u_{C}^\epsilon|^{1 + \zeta} \, \|_{L^1} 
\le (2/p)^{1+\zeta} \, || \nabla_x (u_{C}^\epsilon)^{p/2}||_{L^2}^{1+\zeta}
\,\, 
\left( \int_0^T\int \left[ |u_{C}^\epsilon|^{(2-p)\,\frac{1+\zeta}{1-\zeta}}
\right] \right)^{\frac{1-\zeta}{2}} \le C_T.
\end{split}\end{equation}
We can therefore use Aubin's lemma and extract a subsequence
from $(u_A^\epsilon+u_B^\epsilon)_\epsilon$ (we keep the notation $(u_A^\epsilon+u_B^\epsilon)_\epsilon$ for this subsequence)
which converges towards a limit $u$ (lying in $L^{2}$, and nonnegative) for a.e. $(t,x) \in [0,T]\times\Omega$.
\medskip

Using the elementary inequality (\ref{forp}), inequality (\ref{starstar}) still holds  when $p \in ]0, \min(1, p^\ast -d)[$), and implies the convergences (\ref{convergence_hu_A-ku_B}),
(\ref{convergence_u_A}), (\ref{convergence_u_B}), (\ref{convul}) [with $a+p_0$ replaced by $p^*$].
Moreover, thanks to estimate (\ref{twn2}), the convergence (\ref{convergence_grad_u}) also holds.
\medskip

Then, as in Proposition~\ref{theo_sing1}, $u_A,u_B,v$ are
 defined on $\R_+\times\Omega$, and $u_A,u_B,v \ge 0$ a.e.
Moreover, $h(v)\, u_A = k(v)\,u_B$ a.e., and $u_A,u_B \in L^{p^\ast}$. Finally, we recall that
$v \in L^{\infty}$.
\medskip

Let us now show \eqref{es:Vp}. Thanks to the uniform (in $\var$) estimates \eqref{prelim_upr} and \eqref{estinj2}, we get for all $p\in]0, \min(1, p^\ast -d)[$,
\begin{equation}
\sup_{t\in [0,T]} \int_\Omega u (t) < +\infty \qquad \text{and} 
\qquad \int_0^T \int_\Omega \left( |\nabla_x u_A^{p/2}|^2+|\nabla_x u_B^{p/2}|^2 \right) <+\infty,
\end{equation}
where we have used Fatou's lemma for the first inequality and Kakutani's theorem applied to the reflexive space $L^2$ for the second inequality. 
We also recall that 
$\nabla_x v\in L^{2(1+p/d)}$ for all $p\in ]0, p^\ast -d]$. 
Using the identity, $u=\frac{h(v)+k(v)}{k(v)}\, u_A$, we see that for some $p>0$ small enough
\begin{align*}
\nabla_x \left(u^{p/2}\right) = \stackrel{\in L^{2 p^\ast /p}}{\overbrace{\phantom{\Bigg|} u_A^{p/2} }} \stackrel{\in L^\infty}{\overbrace{\left[\left(\frac{h+k}{k}\right)^{p/2}\right]'(v)}} \stackrel{\in L^{2(1+p/d)}}{\overbrace{\phantom{\Bigg|} \nabla_x v}} + \stackrel{\in L^\infty}{\overbrace{\left(\frac{h(v)+k(v)}{k(v)}\right)^{p/2}}} \,\stackrel{\in L^2}{\overbrace{\phantom{\Bigg|} \nabla_x \left(u_A^{p/2}\right)}} \;\in\; L^2,
\end{align*}
since 
(using $d\le2<p^\ast$) $\frac{1}{2p^\ast/p}+\frac{1}{2(1+p/d)}=\frac{p}{2p^\ast}+\frac{1}{2(1+p/d)} = \frac{1}{2}-\frac{p}{2}(\frac{1}{d}-\frac{1}{p^\ast}) + o_{p \to 0} (p) <\frac{1}{2}$ 
(remember that we take  $p>0$  small enough).
\medskip

We now briefly indicate how to pass to the limit in the various terms appearing in 
the approximate equations (\ref{wee1}) and (\ref{wee2}). Using estimate (\ref{star}), the 
uniform boundedness of $v^\var$ in $L^{\infty}$ and the weak convergence of $\nabla_x v^\epsilon$,
 we get
(\ref{weec1}) and (\ref{weec2}).
% and (\ref{weec7}).
\par
 The same estimates imply that
$\psi_1\, (u_A^\epsilon+u_B^\epsilon)\, (r_u - r_{a}\, (u_A^\epsilon+u_B^\epsilon)^a - r_{b}\,(v^\epsilon)^b)$ is bounded in $L^{\frac{p^\ast}{1+a}}$, and
 $\psi_2\,
 v^\epsilon\, (r_v - r_{c}\,(v^\epsilon)^c - r_d\, (u_A^\epsilon+u_B^\epsilon)^d)$ 
 is bounded in $L^{\frac{p^\ast}{d}}$, so that we get (\ref{weec3}), (\ref{weec4}).
\par 
We know that $u_{A,in}^\epsilon + u_{B,in}^\epsilon \to u_{in}$ a.e. on $\Omega$. But
 $u_{in} \in L^{2}(\Omega)$, so that $u_{A,in}$ and $u_{B,in}$ also lie in $L^{2}(\Omega)$,
and $u_{A,in}^\epsilon,  u_{B,in}^\epsilon$ are bounded (uniformly w.r.t. $\var$) in 
$L^{2}(\Omega)$, so that we get (\ref{weec5}), (\ref{weec6}).
\par
Finally, the weak  convergence (in $L^1$) of $\nabla_x u_C^\epsilon$ towards $\nabla_x u_C$ (for $C=A,B$)
implies the convergence (\ref{weec7}), and the estimate $\nabla_x [\phi(v)\,u] \in L^1$.  
 \medskip

This concludes the proof of
Proposition~\ref{theo_sing2}.
% with the same arguments as those of Proposition~\ref{theo_sing1}.
\end{proof}

\section{Proof of existence, regularity and stability}\label{sec3}

In this section, we prove the Theorems \ref{theo_ex1} and \ref{theo_ex2}.
\bigskip

\begin{proof}[Proof of Theorem \ref{theo_ex1}] {\sl{First step: existence}}
\medskip

We use the notation $v_1:=\max \left(||v_{in}||_{L^{\infty}(\Omega)}, \left[\frac{r_v}{r_c\,(c+1)}\right]^{1/c}\right)$.
Thanks to a smooth cutoff function $\chi(v)$ ($\chi(v)=1$ for $0\le v\le v_1$, $\chi(v)=0$ for $v\ge 2 v_1$ and $0\le \chi(v) \le 1$ for all $v\ge0$), we define $\phi_B(v):=\chi(v)\,\phi(v)$ for all $v\ge0$.
 Since $\phi_B$ is a continuous function with compact support, it is bounded by some positive constant $\phi_1$.
\smallskip

Thanks to Assumption A satisfied by the parameters of  Theorem \ref{theo_ex1}, we see that
$d_u, d_v$, $r_u, r_v, r_a, r_b, r_c, r_d$, $a,b,c,d$ satisfy Assumption B of Proposition~\ref{theo_sing1}. Then we define 
 $d_A:=d_u/2$, $d_B:=d_u+\phi_1$, so that they also satisfy Assumption B (that is, they are 
strictly positive).  Finally we define the functions $h,k$ thanks to
$h(v):=d_u/2+\phi_B(v)$, 
%\ge d_u/2>0$,
 $k(v):=d_u/2+\phi_1-\phi_B(v)$. It is clear that $h,k \in C^1(\R_+)$ (because $\phi \in C^1(\R_+)$
 and $\chi$ is smooth). Moreover $h(v) \ge d_u/2>0$, $k(v) \ge d_u/2>0$, and
 $d_A+d_B\frac{h(v)}{h(v)+k(v)}=d_u+\phi_B(v)$. As a consequence, Assumption B is fulfilled
 except that $\phi(v)$ is replaced by $\phi_B(v)$.
 \smallskip
 
 Moreover, the extra assumptions on the parameters ($d<a$)  and on the initial data ($u_{in} \in L^{p_0}(\Omega)$, 
 $v_{in}\in L^\infty(\Omega)\cap W^{2, 1 + p_0/d}(\Omega)$ for some $p_0>1$) are the same 
 in Theorem \ref{theo_ex1} and Proposition~\ref{theo_sing1}.
\smallskip

 Then, 
 Proposition~\ref{theo_sing1} ensures that there exists a weak solution
 to system (\ref{sku1})--(\ref{sku4}) with $\phi(v)$ replaced by $\phi_B(v)$.
 Moreover, this solution $(u,v)$ has nonnegative components, $\nabla_x v \in L^{2(1+p_0/d)}_{\text{loc}}(\R_+\times\ov{\Omega})$, $u \in L^{p_0+a}_{\text{loc}}(\R_+\times\ov{\Omega})$,
 and for all $p\in]1,p_0]$, $T>0$,
\begin{equation}\label{reg_u}
 \sup_{t\in [0,T]} \int_\Omega u^{p_0} (t) < +\infty \qquad \text{and} \qquad \int_0^T \int_\Omega |\nabla_x u^{p/2}|^2<+\infty.
\end{equation}
Finally, $\nabla_x u, \nabla_x(u\,\phi(v)) \in L^{1}_{\text{loc}}(\R_+\times\ov{\Omega})$.
\medskip

 We also know that the bound $0\le v(t,x)\le v_1$ holds. By definition of $\phi_B$,
 we then have $\phi_B(v(t,x))=\phi(v(t,x))$ for all $t\ge0$, $x\in\Omega$, so that $(u,v)$ is in fact a weak solution of (\ref{sku1})--(\ref{sku4}), and this ends the proof of existence
in Theorem \ref{theo_ex1}.

\bigskip\noindent

{\sl{Second step: regularity, first part}} 
\medskip

We fix $T>0$ and define $p_1:=\max(2,\,a(s_0-1))$. By assumption, $u_{in}$ lies in $W^{2,s_0}(\Omega)$ with $s_0>1+N/2$, so that using a Sobolev embedding, $u_{in}$ lies in $L^{p_1}(\Omega)$. We also know (thanks to our assumptions) that $v_{in}\in W^{2,1+p_1/d}(\Omega)$. The results of the first step can therefore be 
obtained with $p_0$ replaced by $p_1$: in particular, estimate \eqref{reg_u} with $p_0$ replaced by $p_1$ implies that $u$ lies in $L^{p_1+a}([0,T] \times \Omega )$ and
\begin{equation}\label{u_V2}
\sup_{t\in [0,T]} \int_\Omega u^{2} (t) < +\infty \qquad \text{;} \qquad \int_0^T \int_\Omega |\nabla_x u|^2<+\infty.
\end{equation}

We now define $q_0:=(a+p_1)/d>s_0$. Using the maximal regularity for the (weak solutions of the) heat equation, we get (remember that $v$ lies in $L^\infty$)
\begin{equation}\label{prelim_v_CD}
\|\partial_t v\|_{L^{q_0}} \le C_T \,(1+\|u^d\|_{L^{q_0}})\le C_T, \hspace{1cm} \|\nabla_x^2 v\|_{L^{q_0}} \le C_T \,(1+\|u^d\|_{L^{q_0}})\le C_T.
\end{equation}

Using embedding results (see for example Lemma 3.3 in Chapter II of \cite{lsu}) and the fact that $q_0> 1 + N/2$, we see that $v$ is H\"older continuous on $[0,T] \times \ov{\Omega}$. 
\par
This shows that $v$ has the smoothness required in the theorem.
\smallskip

Similarly, $\partial_t \phi(v)= \phi'(v)\, \partial_t v$ and $\nabla_x^2 \phi(v)= \phi''(v)\,|\nabla_x v|^2  + \phi'(v) \nabla_x^2 v$ lie in $L^{q_0}$, so that $\phi(v) $ is also H\"older continuous 
on $[0,T] \times \ov{\Omega}$.
 We then rewrite the equation satisfied by $u$ as
\begin{equation}\label{eq:CD_u_parabolic}
\partial_{t} u - \nabla_x\cdot [A(t,x) \,\nabla_x u + B(t,x) u] +C(t,x)\,u =0,
\end{equation}
where $A =d_u +\phi(v)$ is H\"older continuous on $[0,T] \times \ov{\Omega}$,
$B = \nabla_x \phi(v)$ lies in $L^{2q_0}$,
and $C = - r_u+r_{a}u^a+r_{b}v^b$ lies in $L^{s_0}$. Note furthermore that $\nabla_x A=\nabla_x \phi(v)$ lies in $L^{2q_0}$ and $\nabla_x \cdot B=\Delta_x \phi(v) $ lies in $L^{s_0}$.
\medskip

We now recall two classical theorems from the theory of linear parabolic equations 
(see for example Theorem 5.1 in Chapter III of \cite{lsu} for the first one, and Theorem 9.1 and its corollary in Chapter IV of \cite{lsu} for the second one):
\begin{propo}\label{theo:parabolic_uniq}
Let $\Omega$ be a smooth bounded domain of $\R^N$ ($N\in \N^*$),  $T>0$ and $u_{in} \in L^{2}(\Omega)$. Consider the system
\begin{equation}\label{eq:linear_parabolic_w}\begin{split}
\partial_{t} u - \nabla_x\cdot [A(t,x) \,\nabla_x u + B(t,x) u] +C(t,x)\,u =0\quad {\hbox{ in }} [0,T] \times\Omega ,\\
\nabla_x u(t,x) \cdot n(x) = 0 \quad {\hbox{ on }} [0,T] \times\pa\Omega , \qquad u(0,\cdot) = u_{in} \quad {\hbox{ in }} \Omega ,
\end{split}\end{equation}
 where the coefficients satisfy: $A:= A(t,x)>0$ is continuous on $[0,T]\times \ov{\Omega}$, $B:=B(t,x)$ lies
 in $(L^{N+2})^N$, and $C:=C(t,x)$ lies in $L^{1+N/2}$.
\par
 A function $u:=u(t,x)$ is said to be a weak solution of \eqref{eq:linear_parabolic_w} (in the $V_2$ sense) if $u$ satisfies \eqref{u_V2} and, for all test functions $\psi \in C^1_c([0,T[ \times {\ov{\Omega}})$, %such that $\quad \forall t\in[0,T[, x \in \pa\Omega, \quad \nabla_x \psi(t,x) \cdot n(x) = 0$, 
the following identity holds:
\begin{equation*}%\label{we1}
 - \int_0^{\infty}\int_{\Omega} (\pa_t \psi)\, u - \int_{\Omega} \psi(0,\cdot)\, u_{in} 
+ \int_0^{\infty}\int_{\Omega} [A \, \nabla_x u + B\, u ] \cdot \nabla_x \psi +  \int_0^{\infty}\int_{\Omega}   C\,u\,\psi =0. 
\end{equation*}
Notice that all terms in the previous identity are well defined when $u,\psi$, $A,B,C$ satisfy the assumptions
of Proposition \ref{theo:parabolic_uniq} (cf. estimate (3.4) in Chapter II of \cite{lsu}).
\par
 Then system \eqref{eq:linear_parabolic_w} has at most one weak solution (in the $V_2$ sense).
\end{propo}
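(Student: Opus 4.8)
The plan is to use linearity and an energy estimate. Let $u_1,u_2$ be two weak solutions of \eqref{eq:linear_parabolic_w} in the $V_2$ sense and set $w:=u_1-u_2$. Then $w$ is again a weak solution, now with zero initial datum, and it satisfies \eqref{u_V2} since each $u_i$ does. The goal is to prove $w\equiv 0$ on $[0,T]\times\Omega$. Since $A$ is continuous on the compact $[0,T]\times\ov{\Omega}$ and strictly positive, I set $a_0:=\min_{[0,T]\times\ov{\Omega}}A>0$. The central object is the energy identity
\[
\tfrac12\int_\Omega w(\tau)^2 + \int_0^\tau\!\!\int_\Omega A\,|\nabla_x w|^2 = -\int_0^\tau\!\!\int_\Omega B\,w\cdot\nabla_x w -\int_0^\tau\!\!\int_\Omega C\,w^2 , \qquad \tau\in[0,T].
\]
Because $w$ is only a $V_2$ solution it is not itself an admissible test function, so I would recover this identity by a standard time-regularization of the weak formulation (Steklov averaging, or mollification in $t$), using that $A\nabla_x w+Bw\in (L^2)^N$ (here $A$ is bounded, $\nabla_x w\in L^2$, and $Bw\in L^2$ by the Hölder computation below) so that $\pa_t w\in L^2(0,T;(H^1)')$, together with the integration-by-parts lemma $\int_0^\tau\langle\pa_t w,w\rangle=\tfrac12\|w(\tau)\|_{L^2(\Omega)}^2-\tfrac12\|w(0)\|_{L^2(\Omega)}^2$ and $w(0)=0$.

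The heart of the matter is to control the right-hand side using the parabolic Sobolev embedding of $V_2$ into $L^{2(N+2)/N}$ (see e.g. \cite{lsu}): there is a constant $C_S>0$ (depending only on $\Omega$, $N$, $T$) with
\[
\|w\|_{L^{2(N+2)/N}([0,\tau]\times\Omega)}^2 \le C_S\Big(\sup_{t\le\tau}\int_\Omega w(t)^2 + \int_0^\tau\!\!\int_\Omega|\nabla_x w|^2\Big).
\]
The exponents match perfectly with the integrability of the coefficients. For the reaction term, $\tfrac{1}{1+N/2}+\tfrac{2}{2(N+2)/N}=1$, so Hölder gives $\big|\int_0^\tau\!\int_\Omega C\,w^2\big|\le \|C\|_{L^{1+N/2}([0,\tau]\times\Omega)}\,\|w\|_{L^{2(N+2)/N}([0,\tau]\times\Omega)}^2$. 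For the drift term, $\tfrac{1}{N+2}+\tfrac{N}{2(N+2)}+\tfrac12=1$, so Hölder followed by Young's inequality yields
\[
\Big|\int_0^\tau\!\!\int_\Omega B\,w\cdot\nabla_x w\Big| \le \tfrac{a_0}{2}\int_0^\tau\!\!\int_\Omega|\nabla_x w|^2 + C\,\|B\|_{L^{N+2}([0,\tau]\times\Omega)}^2\,\|w\|_{L^{2(N+2)/N}([0,\tau]\times\Omega)}^2 .
\]

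The step I expect to be the main obstacle is closing the estimate, since after inserting the embedding the right-hand side still carries both $\sup_{t\le\tau}\int_\Omega w^2$ and $\int_0^\tau\!\int_\Omega|\nabla_x w|^2$, which must be absorbed into the left-hand side. The device is the absolute continuity of the integral: as $|B|^{N+2}$ and $|C|^{1+N/2}$ are integrable on $[0,T]\times\Omega$, one can choose $\tau_0>0$ so small that $C_S\big(\|C\|_{L^{1+N/2}([0,\tau_0]\times\Omega)}+C\,\|B\|_{L^{N+2}([0,\tau_0]\times\Omega)}^2\big)<\tfrac12\min(\tfrac12,a_0)$. Taking the supremum over $\tau\le\tau_0$ in the energy identity and using the monotonicity in $\tau$ of the right-hand terms, everything is then absorbed, forcing $\sup_{t\le\tau_0}\int_\Omega w^2=0$ and $\int_0^{\tau_0}\!\int_\Omega|\nabla_x w|^2=0$, i.e. $w\equiv 0$ on $[0,\tau_0]$. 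Crucially, $\tau_0$ depends only on the modulus of integrability of $B$ and $C$, not on the size of $w$, so I would finally iterate the same argument on $[\tau_0,2\tau_0]$, $[2\tau_0,3\tau_0],\dots$, restarting each time from a vanishing datum, and reach $w\equiv 0$ on all of $[0,T]$ in finitely many steps. This gives $u_1=u_2$ and proves uniqueness.
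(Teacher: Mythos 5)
Your argument is correct in its essentials, but it is worth knowing that the paper does not prove this proposition at all: it is recalled as a classical result, with a pointer to Theorem 5.1 in Chapter III of \cite{lsu}. What you have written is, in effect, a reconstruction of exactly that classical argument: form the difference $w=u_1-u_2$, justify the energy identity by Steklov averaging (time mollification) of the weak formulation, control the lower-order terms through the parabolic embedding $V_2\hookrightarrow L^{2(N+2)/N}$ --- your H\"older exponents are exactly right, and they make visible why the hypotheses $B\in (L^{N+2})^N$ and $C\in L^{1+N/2}$ are precisely the critical integrability classes for this scheme --- then absorb on a short time interval $[0,\tau_0]$ whose length depends only on the absolute continuity of $\int |B|^{N+2}$ and $\int |C|^{1+N/2}$ (not on $w$), and iterate over finitely many such intervals. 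The comparison is therefore: the paper buys brevity and rigor wholesale by citation, while your route buys self-containedness and an explanation of where the assumptions on $A$, $B$, $C$ come from; the two are not mathematically different proofs, since the cited LSU theorem is proved in essentially this way.

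One parenthetical justification in your write-up is inaccurate, though harmlessly so. The claim $\partial_t w\in L^2(0,T;(H^1)')$ does not follow from your estimates: the divergence-form part $\nabla_x\cdot[A\nabla_x w+Bw]$ does land in $L^2(0,T;(H^1)')$, but the reaction contribution $Cw$ only lies in $L^q([0,T]\times\Omega)$ with $q=\frac{2N+4}{N+4}<2$, so $\partial_t w$ is only in $L^q(0,T;(H^1)')$, and the abstract duality integration-by-parts lemma in the $L^2(0,T;(H^1)')$ setting is not directly applicable. This is precisely why the correct device is the one you name first: Steklov averaging applied to the weak formulation itself, which treats the terms $\int\!\!\int C\,w\,\psi$ and $\int\!\!\int B\,w\cdot\nabla_x\psi$ in their natural space-time trilinear duality (each is finite by the same H\"older computations you performed) and yields the energy identity for a.e.\ $\tau$. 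That a.e.\ version is all the absorption and iteration steps need, including at the restart times $\tau_0,2\tau_0,\dots$, where one starts the identity from an a.e.\ point of the previous interval on which $w$ vanishes.
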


\begin{propo}\label{theo:parabolic_reg}
Let $\Omega$ be a smooth bounded domain of $\R^N$ ($N\in \N^*$), 
 $s>1 + N/2$ and $T>0$. Consider the system
\begin{equation}\label{eq:linear_parabolic}\begin{split}
\partial_{t} u - A(t,x) \,\Delta_x u + B_1(t,x) \cdot \nabla_x u +C_1(t,x)\,u =0\quad {\hbox{ in }} [0,T] \times\Omega ,\\
\nabla_x u(t,x) \cdot n(x) = 0 \quad {\hbox{ on }} [0,T] \times\pa\Omega , \qquad u(0,\cdot) = u_{in} \quad {\hbox{ in }} \Omega ,
\end{split}\end{equation}
 where the coefficients satisfy: $A:= A(t,x)>0$ is continuous on $[0,T]\times \ov{\Omega}$, $B_1:=B_1(t,x)$ lies
 in $(L^r)^N$ for some $r>\max(s,N+2)$, and $C_1:=C_1(t,x)$ lies in $L^s$.
 Suppose also that $u_{in} \in W^{2,s}(\Omega)$ (and, if $s\ge3$, that the compatibility condition $\nabla_x u_{in}(x) \cdot n(x) = 0$ on
 $\partial\Omega$ holds). 
\par
A function $u:=u(t,x)$ is said to be a strong solution of \eqref{eq:linear_parabolic} (in the $W^{1,2}_s$ sense) if $\partial_t u$ and $\pa^2_{x_ix_j} u$ lie in $L^s$ (for $i,j=1..N$) and system \eqref{eq:linear_parabolic} is satisfied almost everywhere in $[0,T] \times\Omega$ (resp. $[0,T] \times\pa\Omega$, resp. $\Omega$).
\par
 Then, system \eqref{eq:linear_parabolic} has a unique strong solution $u$ (in the $W^{1,2}_s$ sense). Furthermore, $u$ is H\"older continuous on $[0,T] \times \ov{\Omega}$.
% (w.r.t. $t$ and $x$).
\end{propo}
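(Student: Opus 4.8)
The plan is to read \eqref{eq:linear_parabolic} as a perturbation of the uniformly parabolic principal part $\partial_t - A\,\Delta_x$, moving the lower-order terms $B_1\cdot\nabla_x u + C_1\,u$ to the right-hand side and treating them as a source in $L^s$. Since $A$ is continuous and strictly positive on the compact set $[0,T]\times\ov\Omega$, it is pinched between two positive constants, so the operator is uniformly parabolic with continuous leading coefficient. The classical $L^s$ maximal regularity theory for such operators (Theorem~9.1 and its corollary in Chapter~IV of \cite{lsu}) then applies to the auxiliary problem $\partial_t w - A\,\Delta_x w = f$ with homogeneous Neumann condition and $w(0,\cdot)=u_{in}\in W^{2,s}(\Omega)$ (together with the compatibility condition when $s\ge 3$): for any $f\in L^s$ it produces a unique strong solution $w\in W^{1,2}_s$ satisfying $\|w\|_{W^{1,2}_s}\le C\,(\|f\|_{L^s}+\|u_{in}\|_{W^{2,s}(\Omega)})$.

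The decisive step is to check that the lower-order operator maps $W^{1,2}_s$ boundedly into $L^s$, which is exactly where the thresholds $s>1+N/2$ and $r>\max(s,N+2)$ enter. The parabolic Sobolev embeddings (Lemma~3.3, Chapter~II of \cite{lsu}) give, for $s>(N+2)/2$, that $u\in L^\infty$, so $\|C_1\,u\|_{L^s}\le\|C_1\|_{L^s}\|u\|_{L^\infty}$ is finite; and for $s<N+2$ they give $\nabla_x u\in L^{q}$ with $q=s(N+2)/(N+2-s)$ (any finite exponent when $s\ge N+2$). Writing $\|B_1\cdot\nabla_x u\|_{L^s}\le\|B_1\|_{L^r}\,\|\nabla_x u\|_{L^{q'}}$ with $q'=rs/(r-s)$, a short exponent count shows $q'<q$ \emph{precisely because} $r>N+2$. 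This strict gap yields a parabolic interpolation inequality of the form $\|\nabla_x u\|_{L^{q'}}+\|u\|_{L^\infty}\le\eta\,\|u\|_{W^{1,2}_s}+C_\eta\,\|u\|_{L^s}$, which is what lets the top-order part of the perturbation be absorbed back into the maximal-regularity estimate.

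With this absorption in hand, existence follows by the standard linear machinery: either a contraction-mapping argument on short time slabs (where the absolute continuity in time of $\|B_1\|_{L^r}$ and $\|C_1\|_{L^s}$ supplies the required smallness), patched over finitely many intervals to cover $[0,T]$, or equivalently the method of continuity applied to $L_\lambda u=\partial_t u-A\,\Delta_x u+\lambda\,(B_1\cdot\nabla_x u+C_1\,u)$ using a $\lambda$-uniform a priori $W^{1,2}_s$ bound and Gr\"onwall. Uniqueness is then immediate: the difference of two $W^{1,2}_s$ solutions has zero source and zero initial datum, so the very same a priori estimate forces it to vanish (this is also part of Theorem~9.1, Chapter~IV of \cite{lsu}). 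Finally, the H\"older continuity of $u$ on $[0,T]\times\ov\Omega$ is read off from the embedding $W^{1,2}_s\hookrightarrow C^{\alpha,\alpha/2}$ with $\alpha=2-(N+2)/s>0$, valid exactly because $s>1+N/2$ (again Lemma~3.3, Chapter~II of \cite{lsu}).

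The main obstacle I anticipate is the bookkeeping of the second paragraph: keeping the genuinely non-smooth coefficients $B_1\in L^r$ and $C_1\in L^s$ under control so that the perturbed source remains in the source space $L^s$ of the maximal-regularity estimate, with enough slack to be absorbed rather than merely bounded. Everything hinges on the two strict thresholds $s>(N+2)/2$ and $r>N+2$: the former places $u$ strictly above the $L^\infty$ embedding threshold, and the latter makes $q'<q$ strict, furnishing the small interpolation factor that closes the fixed-point (or continuity) argument. Once these embeddings are secured, the remaining steps are routine applications of the linear parabolic theory of \cite{lsu}.
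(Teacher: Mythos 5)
Your proposal is correct in outline, but it is worth knowing that the paper does not prove this proposition at all: it is explicitly \emph{recalled} as a classical result, with the proof consisting of the citation to Theorem~9.1 and its corollary in Chapter~IV of \cite{lsu}, a theorem which already covers equations with lower-order coefficients exactly in the integrability classes $B_1\in (L^r)^N$, $r>\max(s,N+2)$, and $C_1\in L^s$, $s>1+N/2$ (indeed, the paper's Remark~2 notes that it deliberately states a non-optimal version of the LSU estimates). What you do instead is reconstruct the proof of that theorem by reduction to the principal part: maximal $L^s$ regularity for $\partial_t-A\Delta_x$ with continuous, uniformly elliptic $A$, plus the parabolic embeddings of Lemma~3.3, Chapter~II of \cite{lsu}, plus absorption and a continuity or slab-by-slab fixed-point argument. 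Your exponent arithmetic is right and is the heart of the matter: with $1/q'=1/s-1/r$ and $1/q=1/s-1/(N+2)$, the strict inequality $q'<q$ holds precisely when $r>N+2$, and $s>(N+2)/2$ is exactly the $L^\infty$ (indeed H\"older, with $\alpha=2-(N+2)/s$) threshold, which also gives the final H\"older claim. Two small cautions if you were to write this out in full: the small-constant interpolation inequality needs Ehrling's lemma, hence compactness of $W^{1,2}_s\hookrightarrow L^\infty$ and of $\nabla_x: W^{1,2}_s\to L^{q'}$ (both true, by Arzel\`a--Ascoli through the H\"older embedding and by strict subcriticality $q'<q$, but this should be said); and the time-slab patching version of the existence argument must address compatibility of the data $u(t_k)$ at slab interfaces when $s\ge 3$, a borderline issue (this is the ``critical case $s=3$'' the paper's Remark~2 alludes to) that your method-of-continuity alternative avoids cleanly, so that alternative is the one to prefer. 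In terms of trade-offs: the paper's citation buys brevity and the sharp form of the estimates, while your argument is self-contained modulo the lower-order-free case and makes transparent \emph{why} the two thresholds $s>1+N/2$ and $r>\max(s,N+2)$ appear --- though note that your base case (maximal regularity with merely continuous, non-constant $A(t,x)$) is itself a coefficient-freezing result of the same depth as the theorem being proved, so the logical savings over the citation are modest.
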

A direct consequence of these two propositions is given by the
\begin{corol}\label{theo:weak_parabolic_reg}
Let $\Omega$ be a smooth bounded domain of $\R^N$ ($N\in \N^*$),
 $s>1 + N/2$ and $T>0$. We assume that $u_{in}$, $A$, $B$, $C$, $B_1:=-B-\nabla_x A$ and $C_1:=C-\nabla_x \cdot B$ satisfy the requirements of Propositions \ref{theo:parabolic_uniq} and \ref{theo:parabolic_reg}.
\par
Then any weak solution $u$ of system \eqref{eq:linear_parabolic_w}, or equivalently system \eqref{eq:linear_parabolic}, (in the $V_2$ sense) is a strong solution (in the $W^{1,2}_s$ sense). In particular, $\partial_t u$ and $\pa^2_{x_ix_j} u$ lie in $L^s$ (for $i,j=1..N$). Furthermore, $u$ is H\"older continuous on $[0,T] \times \ov{\Omega}$.
%(w.r.t. $t$ and $x$).
\end{corol}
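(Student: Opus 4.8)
The plan is to obtain the corollary as a direct splicing of the existence-and-regularity statement of Proposition~\ref{theo:parabolic_reg} together with the uniqueness statement of Proposition~\ref{theo:parabolic_uniq}, the bridge between them being the divergence identity that makes \eqref{eq:linear_parabolic_w} and \eqref{eq:linear_parabolic} the same equation. First I would record this identity: expanding the divergence gives $\nabla_x\cdot[A\,\nabla_x u + B\,u] = A\,\Delta_x u + \nabla_x A\cdot\nabla_x u + (\nabla_x\cdot B)\,u + B\cdot\nabla_x u$, so that with the definitions $B_1 := -B-\nabla_x A$ and $C_1 := C-\nabla_x\cdot B$ the divergence-form equation \eqref{eq:linear_parabolic_w} is exactly the non-divergence-form equation \eqref{eq:linear_parabolic}. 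This rewriting is licit precisely because the hypotheses grant $\nabla_x A$ and $\nabla_x\cdot B$ the integrability needed for $B_1$ and $C_1$ to meet the requirements of Proposition~\ref{theo:parabolic_reg}; and the Neumann boundary condition and the initial condition are identical in both formulations.

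Next, since by assumption $u_{in}\in W^{2,s}(\Omega)$ (with the compatibility condition when $s\ge 3$) and the coefficients $A,B_1,C_1$ satisfy the hypotheses of Proposition~\ref{theo:parabolic_reg}, that proposition produces a strong solution $\tilde u$ of \eqref{eq:linear_parabolic} in the $W^{1,2}_s$ sense, with $\pa_t \tilde u,\ \pa^2_{x_ix_j}\tilde u \in L^s$, and moreover $\tilde u$ H\"older continuous on $[0,T]\times\ov\Omega$. The third step is to verify that this $\tilde u$ is in particular a weak solution of \eqref{eq:linear_parabolic_w} in the $V_2$ sense: multiplying the equation (in its divergence form) by a test function $\psi\in C^1_c([0,T[\times\ov\Omega)$ and integrating by parts produces exactly the weak identity of Proposition~\ref{theo:parabolic_uniq}, the boundary contributions vanishing by the Neumann condition. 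The only genuine regularity check is \eqref{u_V2}: H\"older continuity on the compact set $[0,T]\times\ov\Omega$ makes $\tilde u$ bounded, whence $\sup_t\int_\Omega \tilde u^2<+\infty$; and since $\pa^2_{x_ix_j}\tilde u\in L^s$ yields $\tilde u(t,\cdot)\in W^{2,s}(\Omega)$ for a.e.\ $t$, a standard Sobolev embedding on the bounded domain gives $\nabla_x\tilde u\in L^2([0,T]\times\Omega)$.

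Finally, both the given weak solution $u$ and the constructed $\tilde u$ are weak solutions of \eqref{eq:linear_parabolic_w} in the $V_2$ sense with the same initial datum $u_{in}$, so Proposition~\ref{theo:parabolic_uniq} forces $u=\tilde u$. Consequently $u$ inherits all the regularity of $\tilde u$: its time derivative and second-order spatial derivatives lie in $L^s$, and $u$ is H\"older continuous on $[0,T]\times\ov\Omega$, which is the assertion of the corollary.

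I expect the main (and essentially the only non-bookkeeping) obstacle to be the third step, namely confirming that the strong solution furnished by Proposition~\ref{theo:parabolic_reg} genuinely qualifies as a $V_2$ weak solution, i.e.\ that it satisfies \eqref{u_V2}, so that the uniqueness proposition is applicable; once the two cited theorems are known to act on the \emph{same} problem via the divergence identity, the rest reduces to matching the coefficient hypotheses, which the statement of the corollary has assumed outright.
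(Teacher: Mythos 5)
Your proposal is correct and follows essentially the same route as the paper, which offers no separate proof and simply presents the corollary as a ``direct consequence'' of Propositions \ref{theo:parabolic_uniq} and \ref{theo:parabolic_reg}: produce the strong solution via Proposition \ref{theo:parabolic_reg}, check that it is a $V_2$ weak solution, and invoke the uniqueness of Proposition \ref{theo:parabolic_uniq} to identify it with the given weak solution. Your third step is exactly the content the paper leaves implicit — note only that the vanishing of the boundary term in your integration by parts requires $B\cdot n=0$ on $\pa\Omega$ (which holds in the paper's application, where $B=\nabla_x \phi(v)$ and $v$ satisfies homogeneous Neumann conditions), a point glossed over both in your write-up and in the paper's statement of the corollary.
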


We now come back to the second step of the proof of Theorem \ref{theo_ex1}.
Using Corollary \ref{theo:weak_parabolic_reg} with $s=s_0$, we see that $u$ has the smoothness required in the theorem.
This concludes the second step of the proof of Theorem \ref{theo_ex1}, that is the first part of the study of regularity.
\bigskip

{\sl{Third step: regularity, second part}}
\medskip

We now assume that
$\phi$, (resp. $u_{in}, v_{in}$) have H\"older continuous second order derivatives on $\R_+$ (resp. $\ov{\Omega}$). We fix $T>0$.
\par
We already know that $u$ and $v$ are H\"older continuous on $[0,T] \times \ov{\Omega}$. 
 It is then clear that in eq. (\ref{sku2}), the reaction term is H\"older continuous on $[0,T] \times \ov{\Omega}$.
 Thanks to standard results in the theory of linear parabolic equations (see for example Theorem 5.3 in Chapter IV of \cite{lsu}), $\partial_t v$ and $\nabla_x^2 v$ are also H\"older continuous on $[0,T] \times \ov{\Omega}$. 
Writing eq. (\ref{sku1}) in its form \eqref{eq:linear_parabolic}, we see that the coefficients $A$, $B_1:=-B-\nabla_x A$ and $C_1:=C-\nabla_x \cdot B$ are H\"older continuous on  $[0,T] \times \ov{\Omega}$ (note that we use here 
the H\"older continuity of $\phi''$). The same result for linear parabolic equations implies  that $\partial_t u$ and $\nabla_x^2 u$ are H\"older continuous on $[0,T] \times \ov{\Omega}$.
\medskip

This concludes the second step of the study of the regularity.
\bigskip

{\sl{Fourth step: stability and uniqueness}}
\medskip

We still assume that $\phi$, (resp. $u_{in}, v_{in}$) have H\"older continuous second order derivatives on $\R_+$ (resp. $\ov{\Omega}$).
\par
Let $(u_1,v_1)$ and $(u_2,v_2)$ be two weak solutions of \eqref{sku1}-\eqref{sku4}
in the sense of Definition \ref{defimer} satisfying the assumptions of the theorem. Recall the definition of $p_1$ in the second step, and notice that by assumption $u_1$, $u_2\in L^{p_1+a}$. Moreover, 
 estimate \eqref{u_V2} with $u=u_1,\,u_2$ holds. Therefore the computations of the second and third steps are valid for $(u,v)=(u_1,v_1),\, (u_2,v_2)$. This implies that these solutions $(u_1,v_1)$ and $(u_2,v_2)$ are continuous (and even H\"older continuous) functions on $[0,T] \times \ov{\Omega}$, and so are the space gradients $\nabla_x v_1$ and $\nabla_x v_2$. 
\medskip

For any function $(u,v)\mapsto F(u,v)$, we write $\overline{F(u,v)}=\frac{F(u_1,v_1)+F(u_2,v_2)}{2}$. 
\medskip

We substract the equations satisfied by $(u_2,v_2)$ to the equations satisfied by $(u_1,v_1)$, 
and get

\begin{equation} \label{eq:cross_diff_difference} \begin{aligned}
	\partial_{t} (u_1-u_2)& - \Delta_x [(d_A +\overline{\phi(v)})\,(u_1-u_2)]- \Delta_x [(\phi(v_1)-\phi(v_2))\,\overline{u}]\\
	&= [r_v - r_{a}\,\overline{u^a} - r_{b}\,\overline{v^b}]\, (u_1-u_2) - [r_{a}\,(u_1^a-u_2^a)+r_{b}\,(v_1^b-v_2^b)]\, \overline{u},\\
	\partial_{t} (v_1-v_2)& - d_v\,\Delta_x (v_1-v_2)\\
	&= [r_v-r_{c}\,\overline{v^c}-r_{d}\,\overline{u^d}]\,(v_1-v_2) - [r_{c}\,(v_1^c-v_2^c)+r_{d}\,(u_1^d-u_2^d)]\,\overline{v}.
\end{aligned} \end{equation}

We multiply the first equation by the difference $u_1-u_2$ and integrate w.r.t. space and time. We get the identity

\begin{equation} \label{eq:L2_energy_difference_u} \begin{aligned}
	\frac{1}{2} \int_\Omega (u_1-u_2)^2(T)& 
	+ \int_0^T\int_\Omega (d_A +\overline{\phi(v)})\,|\nabla_x(u_1-u_2)|^2
	+\int_0^T\int_\Omega (u_1-u_2)\nabla_x(u_1-u_2)\cdot \nabla_x (\overline{\phi(v)})\\
	&+\int_0^T\int_\Omega (\phi(v_1)-\phi(v_2))\nabla_x(u_1-u_2)\cdot \nabla_x \overline{u}
	+\int_0^T\int_\Omega \overline{u}\,\nabla_x(u_1-u_2)\cdot\nabla_x [\phi(v_1)-\phi(v_2)]\\
	= \frac{1}{2} \int_\Omega (u_1-u_2)^2(0) &+ \int_0^T\int_\Omega [r_v - r_{a}\overline{u^a} - r_{b}\overline{v^b}]\, (u_1-u_2)^2 - \int_0^T\int_\Omega (u_1-u_2)\,[r_{a}\,(u_1^a-u_2^a)+r_{b}\,(v_1^b-v_2^b)]\, \overline{u}.
\end{aligned} \end{equation}

In the left-hand side of this identity, the two first terms are nonnegative.
 The other terms are controlled thanks to the smoothness of the functions $(\overline{u},\overline{v})$ and their space gradients (and the elementary inequality $2ab\le a^2+b^2$).
We detail below their treatment: the third term of \eqref{eq:L2_energy_difference_u} is controlled by
\begin{equation}\begin{split}
\left|\int_0^T\int_\Omega (u_1-u_2)\,\nabla_x(u_1-u_2)\cdot \nabla_x (\overline{\phi(v)})\right|
&\le C_T \int_0^T\int_\Omega |u_1-u_2|\, |\nabla_x(u_1-u_2)|\\
&\le \frac{d_A}{4} \int_0^T\int_\Omega |\nabla_x(u_1-u_2)|^2 + C_T \int_0^T\int_\Omega |u_1-u_2|^2,
\end{split}\end{equation}
the fourth term of \eqref{eq:L2_energy_difference_u} is controlled by
\begin{equation}\begin{split}
\left|\int_0^T\int_\Omega (\phi(v_1)-\phi(v_2))\,\nabla_x(u_1-u_2)\cdot \nabla_x \overline{u}\right|
&\le \frac{d_A}{4} \int_0^T\int_\Omega |\nabla_x(u_1-u_2)|^2 + C_T \int_0^T\int_\Omega |\phi(v_1)-\phi(v_2)|^2,
\end{split}\end{equation}
and the fifth term of \eqref{eq:L2_energy_difference_u} is controlled by
\begin{equation}\begin{split}
\left|\int_0^T\int_\Omega \overline{u}\,\nabla_x(u_1-u_2)\cdot\nabla_x 
[\phi(v_1)-\phi(v_2)]\right|
&\le \frac{d_A}{4} \int_0^T\int_\Omega |\nabla_x(u_1-u_2)|^2 + C_T \int_0^T\int_\Omega |\nabla_x[\phi(v_1)-\phi(v_2)]|^2,
\end{split}\end{equation}
where moreover
\begin{equation}\begin{split}
\int_0^T\int_\Omega |\nabla_x[\phi(v_1)-\phi(v_2)]|^2 = \int_0^T\int_\Omega |\overline{\phi'(v)}\,\nabla_x (v_1 - v_2) + (\phi'(v_1)-\phi'(v_2))\,\nabla_x \overline{v} |^2\\
\le C_T \int_0^T\int_\Omega |\nabla_x (v_1 - v_2)|^2
+ C_T \int_0^T\int_\Omega |\phi'(v_1)-\phi'(v_2) |^2.
\end{split}\end{equation}
It remains to control the last term of the right-hand side :
\begin{equation}\begin{split}
- \int_0^T\int_\Omega (u_1-u_2)\,[r_{a}\,(u_1^a-u_2^a)+r_{b}\,(v_1^b-v_2^b)]\, \overline{u}
\le r_{b} \int_0^T\int_\Omega |u_1-u_2| \,|v_1^b-v_2^b|\, \overline{u}\\
\le C_T \int_0^T\int_\Omega |u_1-u_2|^2 + C_T \int_0^T\int_\Omega |v_1^b-v_2^b|^2.
\end{split}\end{equation}
Thanks to those estimates, the identity \eqref{eq:L2_energy_difference_u} becomes
\begin{equation}\label{eq:L2_energy_difference_u_est} \begin{aligned}
	\int_\Omega (u_1-u_2)^2(T)
	\le & \int_\Omega (u_1-u_2)^2(0) + C_T \,\left( \int_0^T\int_\Omega (u_1-u_2)^2 + \int_0^T\int_\Omega |\phi(v_1)- \phi(v_2)|^2\right.\\ &
+  \int_0^T\int_\Omega |\phi'(v_1)-\phi'(v_2) |^2 
%+ \int_0^T\int_\Omega |g'(v_1)-g'(v_2)|^2
 + \left.\int_0^T\int_\Omega |\nabla_x (v_1 - v_2)|^2 + \int_0^T\int_\Omega |v_1^b-v_2^b|^2\right).
\end{aligned} \end{equation}
\smallskip

We now multiply the second equation of \eqref{eq:cross_diff_difference} by the difference $v_1-v_2$ and integrate w.r.t.
 space and time. We get
\begin{equation} \label{eq:L2_energy_difference_v} \begin{aligned}	
	&\frac{1}{2}\int_\Omega (v_1-v_2)^2(T) + d_v \int_0^T\int_\Omega |\nabla_x (v_1-v_2)|^2\\
    &=\frac{1}{2}\int_\Omega (v_1-v_2)^2(0)	\\
&+ \int_0^T\int_\Omega [r_v-r_{c}\,\overline{v^c}-r_{d}\,\overline{u^d}]\,(v_1-v_2)^2 - \int_0^T\int_\Omega (v_1-v_2)\,[r_{c}\,(v_1^c-v_2^c)+r_{d}\,(u_1^d-u_2^d)]\,\overline{v}\\
	&\le \frac{1}{2}\int_\Omega (v_1-v_2)^2(0) + C_T \bigg( \int_0^T\int_\Omega (v_1-v_2)^2 + 
\int_0^T\int_\Omega |u_1^d-u_2^d|^2
% + \int_0^T\int_\Omega |v_1^c-v_2^c|^2 
\bigg) .
\end{aligned} \end{equation}

We combine the two energy estimates \eqref{eq:L2_energy_difference_u_est} and \eqref{eq:L2_energy_difference_v}:
\begin{equation}\label{eq:L2_energy_difference_uv} \begin{aligned}
	&\int_\Omega (u_1-u_2)^2(T)+\int_\Omega (v_1-v_2)^2(T)
	\le \int_\Omega (u_1-u_2)^2(0)+\int_\Omega (v_1-v_2)^2(0) \\
 + \,  &C_T \left( \int_0^T\int_\Omega (u_1-u_2)^2 + \int_0^T\int_\Omega (v_1-v_2)^2 
\right.\\
 + \int_0^T\int_\Omega |u_1^d-u_2^d|^2 &+ \int_0^T\int_\Omega |\phi(v_1)- \phi(v_2)|^+ \left. \int_0^T\int_\Omega |v_1^b-v_2^b|^2 + \int_0^T\int_\Omega |\phi'(v_1)-\phi'(v_2)|^2 \right).
\end{aligned} \end{equation}
Since $\phi''$ is continuous on $\mathbb{R}_+$, the applications $\phi$ and $\phi'$ are locally Lipschitz on $\mathbb{R}_+$. 
The assumption $b\ge 1, d\ge 1$ ensures that the applications $v\mapsto v^b$ and $u\mapsto u^d$ are also locally Lipschitz on $\mathbb{R}_+$. Therefore
\begin{equation}
	\int_\Omega (u_1-u_2)^2(T)+\int_\Omega (v_1-v_2)^2(T)
	\le \int_\Omega (u_1-u_2)^2(0)+\int_\Omega (v_1-v_2)^2(0) 
\end{equation}
$$ + \,C_T\, \left( \int_0^T\int_\Omega (u_1-u_2)^2 + \int_0^T\int_\Omega (v_1-v_2)^2 \right) ,$$
and we can conclude thanks to Gronwall's lemma.
\medskip

Note that thanks to the minimum principle, the assumption $b\ge 1, d\ge 1$ can be relaxed if the initial data $u_{in}$ and $v_{in}$
are bounded below by a strictly positive constant.
\medskip

This concludes the study of stability (and uniqueness), and ends the proof of Theorem \ref{theo_ex1}.

\end{proof}

\bigskip

\begin{proof}[Proof of Theorem \ref{theo_ex2}]

As in the proof of Theorem \ref{theo_ex1}, we use the notation $v_1:=\max \left(||v_{in}||_{L^{\infty}(\Omega)}, \left[\frac{r_v}{r_c\,(c+1)}\right]^{1/c} \right)$. We also
introduce a smooth cutoff function $\chi(v)$ ($\chi(v)=1$ for $0\le v\le v_1$, $\chi(v)=0$ for $v\ge 2 v_1$ and $0\le \chi(v) \le 1$ for all $v\ge0$), together with $\phi_B(v):=\chi(v)\,\phi(v)$ (for all $v\ge0$), and an upper bound $\phi_1$ for $\phi_B$.
\smallskip

Thanks to Assumption A satisfied by the parameters of  Theorem \ref{theo_ex2}, we see that
$d_u, d_v$, $r_u, r_v, r_a, r_b, r_c, r_d$, $a,b,c,d$ satisfy Assumption B of Proposition~\ref{theo_sing2}. Then we define, as in the proof of Theorem \ref{theo_ex1},
 $d_A:=d_u/2$, $d_B:=d_u+\phi_1$, so that they  satisfy Assumption B,
and the functions $h,k$ thanks to
$h(v):=d_u/2+\phi_B(v)$, 
%\ge d_u/2>0$,
 $k(v):=d_u/2+\phi_1-\phi_B(v)$. It is clear that $h,k \in C^1(\R_+)$
and  $h(v) \ge d_u/2>0$, $k(v) \ge d_u/2>0$, and
 $d_A+d_B\frac{h(v)}{h(v)+k(v)}=d_u+\phi_B(v)$. As a consequence, Assumption B is fulfilled
 except that $\phi(v)$ is replaced by $\phi_B(v)$.
 \smallskip
 
 Moreover, the extra assumptions on the parameters ($d \ge a$, $a\le 1$, $d\le 2$)  and on the initial data ($u_{in} \in L^{2}(\Omega)$, 
 $v_{in}\in L^\infty(\Omega)\cap W^{2, 1 + 2/d}(\Omega)$) are the same 
 in Theorem \ref{theo_ex2} and Proposition~\ref{theo_sing2}.
\smallskip

 Then, Proposition~\ref{theo_sing2} ensures that there exists a weak solution
 to system (\ref{sku1})--(\ref{sku4}) with $\phi(v)$ replaced by $\phi_B(v)$.
 Moreover, this solution $(u,v)$ has nonnegative components and lies in 
 $L^{2}_{\text{loc}}(\R_+\times\ov{\Omega})\times L^\infty_{\text{loc}}(\R_+\times\ov{\Omega})$.
We also know that 
%the bound $0\le v(t,x)\le v_1$ holds and that
 for some $p>0$,  
$u$ satisfies \eqref{es:Vp}. Moreover, we know that $\nabla_x v \in L^{2 + \eta}_{\text{loc}}(\R_+\times\ov{\Omega})$, $\nabla_x u , \nabla_x (u\,\phi(v))\in L^{1}_{\text{loc}}(\R_+\times\ov{\Omega})$, for some $\eta>0$.
\par
Finally, we know that the bound $0\le v(t,x)\le v_1$ holds, so that
 by definition of $\phi_B$,
 we see that $\phi_B(v(t,x))=\phi(v(t,x))$ for all $t\ge0$, $x\in\Omega$. Then,  $(u,v)$ is in fact a weak solution of (\ref{sku1})--(\ref{sku4}).
This ends the proof of Theorem~\ref{theo_ex2}.
\end{proof}

\end{document}